\numberwithin{equation}{section}
\numberwithin{figure}{section}
\theoremstyle{plain}
\newtheorem{thm}{\protect\theoremname}
  \theoremstyle{plain}
  \newtheorem{cor}[thm]{\protect\corollaryname}
  \theoremstyle{definition}
  \newtheorem{defn}[thm]{\protect\definitionname}
  \theoremstyle{remark}
  \newtheorem{rem}[thm]{\protect\remarkname}
  \theoremstyle{plain}
  \newtheorem{lem}[thm]{\protect\lemmaname}
  \providecommand{\corollaryname}{Corollary}
  \providecommand{\definitionname}{Definition}
  \providecommand{\lemmaname}{Lemma}
  \providecommand{\remarkname}{Remark}
\providecommand{\theoremname}{Theorem}
\begin{document}

\title{Fractal weyl law for skew extensions of expanding maps}

\author{Arnoldi Jean-François}

\address{Institut Fourier, UMR 5582, 100 rue des Maths, BP74 38402 St Martin
d'Hères. }

\email{jean-francois@ujf-grenoble.fr}

\keywords{Compact Lie group extensions, Partially hyperbolic systems, Ruelle
resonances, Decay of correlations, Fractal Weyl law, Semiclassical
analysis.}

\subjclass[2000]{37C39, 37D20, 81Q20}
\begin{abstract}
We consider compact Lie groups extensions of expanding maps of the
circle, essentially restricting to $\mbox{U}(1)$ and $\mbox{SU}(2)$
extensions. The central object of the paper is the associated Ruelle
transfer (or pull-back) operator $\hat{F}$. Harmonic analysis yields
a natural decomposition $\hat{F}=\oplus\hat{F}_{\alpha}$, where $\alpha$
indexes the irreducible representation spaces. Using Semiclassical
techniques we extend a previous result by Faure proving an asymptotic
spectral gap for the family $\left\{ \hat{F}_{\alpha}\right\} $ when
restricted to adapted spaces of distributions. Our main result is
a fractal Weyl upper bound for the number of eigenvalues (the Ruelle
resonances) of these operators out of some fixed disc centered on
$0$ in the complex plane.
\end{abstract}
\maketitle
\tableofcontents{}

\section{Introduction}

Partially hyperbolic dynamical systems form a class of chaotic models
of a more subtle type than the ones provided by the well understood
uniformly hyperbolic setting \cite{baladi_livre_00,brin-02}. The
difficulty in treating such systems is caused by the presence of a
neutral bundle that can drastically slow (or even prevent) the escape
towards statistical equilibrium. Compact Lie groups extension are
an attractive class of models of such type (the fibers of the neutral
bundle are homeomorphic to a given compact Lie group and the system
acts isometrically between fibers), where representation theory and
Lie groups techniques can help tackle issues such as mixing rates
or stable ergodicity. 

In this article we focus on skew extensions of expanding maps of the
circle. Put $S^{1}\equiv\mathbb{R}/\mathbb{Z}$ and let $E:S^{1}\rightarrow S^{1}$
be a $C^{\infty}-$expanding map with $k>1$ smooth inverse branches
$E_{\epsilon}^{-1}$ $\epsilon=0,...,k-1$. This map is automatically
topologically mixing \cite{katok_hasselblatt} and mixing w.r. to
a smooth invariant absolutely continious measure $\mu$ called the
SRB measure. Let $\mathbb{G}$ be a compact Lie group with normalized
Harr measure $m$. For any smooth map $\tau:S^{1}\rightarrow\mathbb{G}$
the skew extension $\widehat{E_{\tau}}:S^{1}\times\mathbb{G}\rightarrow S^{1}\times\mathbb{G}$
is defined by
\begin{equation}
\widehat{E_{\tau}}(x,g)=\left(E(x),\tau(x)g\right).\label{eq:extension}
\end{equation}
 The measure $\hat{\mu}_{\tau}:=\mu\times m$ is an invariant smooth
absolutely continuous measure for $\widehat{E_{\tau}}$. For any two
reasonably regular observables $\Psi,\Phi$ on $S^{1}\times\mathbb{G}$
the central object in ergodic theory is the correlation function,
defined for any $n\in\mathbb{N}^{*}$ by
\begin{equation}
C_{\Psi,\Phi}(n):=\left(\Psi\circ\widehat{E_{\tau}}^{n};\Phi\right)_{L^{2}\left(\hat{\mu}_{\tau}\right)}\label{eq:correlation-fct}
\end{equation}
which converges to $\int\bar{\Psi}d\hat{\mu}_{\tau}\int\Phi d\hat{\mu}_{\tau}$
as $n\rightarrow\infty$ iff $\widehat{E_{\tau}}$ is mixing. Fundamental
questions concern the rate of decay of $C_{\Psi,\Phi}(n)-\int\bar{\Psi}d\hat{\mu}_{\tau}\int\Phi d\hat{\mu}_{\tau}$
and are related to the spectral properties of the so-called \emph{Ruelle
transfer operator} 
\begin{equation}
\widehat{F}_{\tau}:\Psi\mapsto\Psi\circ\widehat{E_{\tau}},\;\Psi\in C^{\infty}\left(S^{1}\times\mathbb{G}\right)\label{eq:transfer_op}
\end{equation}
in adapted Banach spaces of distribution \cite{ruelle_75,ruelle_book}.
The first quantitative results in this context where obtained by Dolgopyat
\cite{dolgopyat_02} who showed an exponential decay rate (exponential
mixing) for generic maps $\tau$. On the other hand Naud \cite{Naud_08,Naud11},
in the analytic context showed that the rate of mixing cannot exceed
a certain bound related to the topological pressure of $-2\log\left|E'\right|$.
In the Abelian case $\mathbb{G}\equiv\mbox{U}(1)$ Faure \cite{fred_gap_09},
using semi-classical analysis showed that the transfer operator acting
in some Hilbert spaces of distributions generically exhibits an essential
spectral radius bounded by $1/\sqrt{E_{\min}}$; with $E_{\min}$
the minimal expansion rate of $E$. This results already deduced by
Tsujii \cite{Tsujii_07} in the setting of suspension semi-flows,
shows that up to terms of order $\rho^{n}$, $1>\rho>1/\sqrt{E_{\min}}$
the escape towards equilibrium is governed by a linear finite rank
operator (theorem 5 in \cite{fred_gap_09} and Eq.(1) in \cite{Tsujii_07}).
We will show that this result extends to the simplest non Abelian
compact lie group $\mathbb{G}\equiv\mbox{SU}(2)$ ( Theorem \ref{thm:gap}
and Corollary \ref{cor:Mixing}). 

As mentioned above to treat such models one uses harmonic analysis.
In particular the celebrated Peter-Weyl theorem \cite{taylor_tome1}
gives
\begin{equation}
L^{2}\left(\mathbb{G}\right)=\oplus_{\alpha}\mbox{dim}_{\mathbb{C}}\left(\mathcal{D}_{\alpha}\right)\mathcal{D}_{\alpha},\label{eq:peter_weyl}
\end{equation}
 where $\mathcal{D}_{\alpha}$ are finite dimensional irreducible
hermitian vector spaces of representation for $\mathbb{G}$. For $\mathbb{G}\equiv\mbox{U}(1)$
this is simply the Fourier decomposition of functions. The transfer
operator (\ref{eq:transfer_op}) extends to a continuous operator
on $L^{2}\left(S^{1}\times\mathbb{G}\right)=L^{2}\left(S^{1}\right)\otimes L^{2}\left(\mathbb{G}\right)$
and preserves the decomposition induced by (\ref{eq:peter_weyl}),
so that
\begin{equation}
\hat{F}_{\tau}=\oplus_{\alpha}\mbox{dim}_{\mathbb{C}}\left(\mathcal{D}_{\alpha}\right)\hat{F}_{\alpha}\label{eq:decomp}
\end{equation}
 with $\hat{F}_{\alpha}:=\hat{F}_{\tau}|_{L^{2}(S^{1})\otimes\mathcal{D}_{\alpha}}$
acting on smooth vector valued functions $\boldsymbol{\varphi}:S^{1}\rightarrow\mathcal{D}_{\alpha}$
as 
\begin{equation}
\left(\hat{F}_{\alpha}\boldsymbol{\varphi}\right)(x)=\hat{\tau}_{\alpha}(x)\boldsymbol{\varphi}(x),\label{eq:reduced_transfer_op}
\end{equation}
 with $\hat{\tau}_{\alpha}(x)$ the representation in $\mathcal{D}_{\alpha}$
of $\tau(x)\in\mathbb{G}$. In some standard Sobolev spaces of distributions
these operators have discrete spectrum (the Ruelle spectrum of resonances,
Theorem \ref{thm:discrete_spect}). For the trivial representation
corresponding to $\mbox{dim}_{\mathbb{C}}\left(\mathcal{D}_{\alpha}\right)=1$
and $\hat{\tau}_{\alpha}(x)\equiv\mbox{Id}$, the constant function
is an obvious eigenfunction of $\hat{F}_{\alpha}$ with eigenvalue
one%
\footnote{For other $\alpha$ they might not be any eigenvalues on the unit
circle. However if $\tau$ maps $S^{1}$ into a closed subgroup $H$
of $\mathbb{G}$ or whenever $\tau$ is co-homologous to such a map,
meaning that $\tau=\eta-\eta\circ E$ with $\eta:S^{1}\rightarrow H\subset\mathbb{G}$,
Then they will be some eigenvalues on the unit circle as $\widehat{E_{\tau}}$
is not topologically transitive in this case, hence not weakly-mixing
\cite{katok_hasselblatt}.%
}. 

For simplicity we shall restrict ourselves to the cases $\mathbb{G}\equiv\mbox{U}(1)$
and $\mathbb{G}\equiv\mbox{SU}(2)$, respectively the Abelian and
the simplest non-Abelian compact Lie groups. For $\mathbb{G}\equiv\mbox{U}(1)$,
$\alpha=\nu\in\mathbb{Z}$ and $\hat{\tau}_{\nu}(x)=e^{i\nu\Omega(x)};\,\Omega\in C^{\infty}\left(S^{1}\right)$.
For $\mathbb{G}\equiv\mbox{SU}(2)$, $\alpha=j\in\frac{1}{2}\mathbb{N}$
and $\mbox{dim}_{\mathbb{C}}\left(\mathcal{D}_{j}\right)=2j+1$. The
point we wish to make here is that the spectral study of the familly
$\left\{ \hat{F}_{\alpha}\right\} $ is a well-posed semi-classical
problem, as in \cite{fred_gap_09} when $\mathbb{G}\equiv\mbox{U}(1)$.
To see this in the non Abelian setting we will use Lie groups coherent
states theory \cite{perelomov1}. Doing so we will derive a Fractal
weyl asymptotic for the number of resonances outside a fixed spectral
radius (Theorem \ref{thm:weyl_law}) in the (semi-classical) limit
$\nu\mbox{ or }j\rightarrow\infty$. As in the previous papers by
Faure \cite{fred-roy-sjostrand-07,fred-RP-06,fred_gap_09}, the techniques
used are derived from the works of Sjöstrand \cite{sjoestrand_90}
Zworski-Lin-Guillope \cite{zworski_lin_guillope_02}, Sjöstrand-Zworski
\cite{sjoestrand_07}, in the context of chaotic scattering.

\section{Statement of the results}

The operators $\hat{F}_{\alpha}$ defined in (\ref{eq:reduced_transfer_op})
extend to the distribution spaces $\mathcal{D}'\left(S^{1}\right)\otimes\mathcal{D}_{\alpha}$
by setting, for any $\boldsymbol{\psi}\in\mathcal{D}'\left(S^{1}\right)\otimes\mathcal{D}_{\alpha}$
, $\boldsymbol{\varphi}\in C^{\infty}\left(S^{1}\right)\otimes\mathcal{D}_{\alpha}$:
$\left(\hat{F}_{\alpha}\boldsymbol{\psi}\right)\left(\bar{\boldsymbol{\varphi}}\right):=\boldsymbol{\psi}\left(\overline{\hat{F}_{\alpha}^{*}\boldsymbol{\varphi}}\right)$,
with the $L^{2}-$adjoint $\hat{F}_{\alpha}^{*}$ given by 
\begin{equation}
\left(\hat{F}_{\alpha}^{*}\boldsymbol{\varphi}\right)(x)=\sum_{y\in E^{-1}\left\{ x\right\} }\frac{1}{E'(y)}\hat{\tau}_{\alpha}(y)\boldsymbol{\varphi}(y).\label{eq:adjoint}
\end{equation}
 Recall that for $m\in\mathbb{R}$ the Sobolev spaces $H^{m}\left(S^{1}\right)\subset\mathcal{D}'\left(S^{1}\right)$
consists of distributions $\psi$ (or continious functions if $m>1/2$)
whose fourier series $\hat{\psi}(\xi)$ satisfy $\left\Vert \psi\right\Vert _{H^{m}}:=\sum_{\xi\in2\pi\mathbb{Z}}\left|\left\langle \xi\right\rangle ^{m}\hat{\psi}(\xi)\right|^{2}<\infty$,
with $\left\langle \xi\right\rangle :=\left(1+\xi^{2}\right)^{1/2}.$
It can equivalently be written \cite{taylor_tome1}:
\begin{equation}
H^{m}\left(S^{1}\right):=\left\langle \hat{\xi}\right\rangle ^{-m}\left(L^{2}(S^{1})\right),\;\hat{\xi}:=-i\frac{d}{dx}.\label{eq:H^m}
\end{equation}
 $\left\langle \hat{\xi}\right\rangle ^{m}$ is a typical representative
of the class of Pseudo-Differential-Operators (PDO) of order $m$
(cf section \ref{sec:PDOtheory} with $\hbar=1$).
\begin{thm}
\label{thm:discrete_spect}(Ruelle \cite{ruelle_86}, Faure \cite{fred_gap_09}).
Here $\mathbb{G}$ can be any compact Lie group and $\hat{F}_{\alpha}$
is defined by (\ref{eq:decomp}) and (\ref{eq:reduced_transfer_op}).
Then $\forall m,$ $\forall\alpha$, the operator $\hat{F}_{\alpha}$
acts in $H^{m}\left(S^{1}\right)\otimes\mathcal{D}_{\alpha}$ and
has discrete spectrum outside a disc of radius $r_{m}:=e_{\min}^{m}(k/e_{\min})^{1/2}$,
with $e_{\min}:=\min_{x}E'(x)>1$. The generalized eigenvalues outside
this disc, along with they respective eigenspaces, do not depend on
$m$ and define the Ruelle spectrum of resonances of $\hat{F}_{\alpha}$.
See figure \ref{fig:resonances}.
\end{thm}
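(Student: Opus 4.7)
The plan is to establish quasi-compactness of $\hat{F}_\alpha$ on $H^m(S^1)\otimes\mathcal{D}_\alpha$ via a Lasota–Yorke type inequality, then invoke Hennion's theorem. Since $\mathcal{D}_\alpha$ is finite dimensional and $\hat{\tau}_\alpha(x)$ is unitary (as a unitary representation of the compact Lie group $\mathbb{G}$), the argument parallels the scalar case: the matrix-valued prefactor contributes only a constant bounded by $1$ to every norm estimate.

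First I would pass to the $L^2$ picture by conjugating with the elliptic PDO $\langle\hat{\xi}\rangle^m$ of order $m$, so that $\hat{F}_\alpha$ on $H^m\otimes\mathcal{D}_\alpha$ is unitarily equivalent to $P_m:=\langle\hat{\xi}\rangle^m\hat{F}_\alpha\langle\hat{\xi}\rangle^{-m}$ on $L^2\otimes\mathcal{D}_\alpha$. By Egorov's theorem (section \ref{sec:PDOtheory}) the principal symbol of $P_m$ is, up to lower order compact terms, the multiplication operator by $|E'(x)|^m\,\hat{\tau}_\alpha(x)$ composed with pull-back by $E$. The unweighted $L^2$ norm of $\hat{F}_\alpha$ is controlled by the change-of-variables bound
\begin{equation*}
\|\hat{F}_\alpha\varphi\|_{L^2}^2\;\le\;\Bigl\|\sum_{y\in E^{-1}\{\cdot\}}\tfrac{1}{E'(y)}\Bigr\|_\infty\|\varphi\|_{L^2}^2\;\le\;\tfrac{k}{e_{\min}}\|\varphi\|_{L^2}^2,
\end{equation*}
and the conjugation by $\langle\hat{\xi}\rangle^m$ contributes the multiplicative factor $|E'|^m$, bounded by $e_{\min}^m$ when $m\ge 0$ (the case $m<0$ is analogous after exchanging $E$ with its inverse branches). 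Iterating yields, for every $n\in\mathbb{N}^*$,
\begin{equation*}
\|\hat{F}_\alpha^n\boldsymbol{\varphi}\|_{H^m}\;\le\;C\,r_m^n\,\|\boldsymbol{\varphi}\|_{H^m}\;+\;K_n\,\|\boldsymbol{\varphi}\|_{H^{m-1}},
\end{equation*}
with $r_m=e_{\min}^m(k/e_{\min})^{1/2}$ and $K_n<\infty$ the remainder of the symbolic calculus. Since the embedding $H^m(S^1)\hookrightarrow H^{m-1}(S^1)$ is compact (Rellich on the compact manifold $S^1$), Hennion's theorem gives $r_{\mathrm{ess}}(\hat{F}_\alpha|_{H^m\otimes\mathcal{D}_\alpha})\le r_m$, hence discrete spectrum outside the disc of radius $r_m$.

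To prove the independence of the Ruelle resonances and eigenspaces on the choice of $m$, I would use a bootstrap argument. If $\lambda\in\mathbb{C}$ with $|\lambda|>r_m$ is an eigenvalue on $H^m$ with eigendistribution $\boldsymbol{\psi}$, then for any $m'\le m$ one has $\boldsymbol{\psi}\in H^{m'}$ trivially, and $\lambda$ is also an eigenvalue on $H^{m'}$ once $|\lambda|>r_{m'}$, with the same eigenspace; the generalised eigenspaces agree by the same argument applied to $(\hat{F}_\alpha-\lambda)^k$. Going the other direction, an eigendistribution in $H^{m'}$ with $|\lambda|>r_m$ automatically lies in $H^m$ by regularising arguments that exploit the contracting dynamics on the cotangent fibres together with the resolvent identity.

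The main obstacle is the sharp symbolic computation behind the Lasota–Yorke bound, i.e.\ tracking the constant $r_m$ through the pseudo-differential conjugation while absorbing the lower order terms into the compact remainder $K_n\|\cdot\|_{H^{m-1}}$. This is the content of Ruelle's original argument \cite{ruelle_86} reinterpreted semiclassically in \cite{fred_gap_09}; the vector-valued extension is harmless because $\hat{\tau}_\alpha(x)$ is a pointwise isometry on $\mathcal{D}_\alpha$, so all scalar estimates on $|\varphi(x)|$ transfer verbatim to $\|\boldsymbol{\varphi}(x)\|_{\mathcal{D}_\alpha}$.
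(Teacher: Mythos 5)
Your route---a vector-valued Lasota--Yorke inequality plus Hennion's theorem---is the classical transfer-operator approach and is, in principle, a legitimate alternative to what the paper actually does. The paper does not re-derive the scalar estimate at all: it conjugates to $L^{2}$, forms $\hat{P}=\hat{Q}_{m}^{*}\hat{Q}_{m}$, commutes $\hat{A}_{m}^{2}=\langle\hat{\xi}\rangle^{2m}\otimes\mathbb{I}$ past the multiplication operator $\hat{\tau}$, and observes that $\hat{P}=\tilde{Q}_{m}^{*}\tilde{Q}_{m}\otimes\mathbb{I}+\hat{C}_{-1}$ with $\hat{C}_{-1}$ an order $-1$, hence compact, correction, thereby reducing to the already-proved scalar bound of \cite{fred_gap_09}. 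So the two routes differ in emphasis: you would re-prove the estimate from scratch, the paper shows the non-Abelian factor contributes only a compact perturbation.

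As written, though, the proposal has concrete gaps. The bound $|E'|^{m}\le e_{\min}^{m}$ holds for $m\le 0$, not $m\ge 0$, since $E'\ge e_{\min}>1$; your case split is reversed, and the parenthetical about ``exchanging $E$ with its inverse branches'' for $m<0$ is not a meaningful fix (the branches are contracting), whereas $m<0$ is in fact the direct case and the only one relevant to the definition of $\mbox{Res}(\hat{F}_{\alpha})$. More importantly, the claim that the vector-valued extension is harmless ``because $\hat{\tau}_{\alpha}(x)$ is a pointwise isometry, so all scalar estimates on $|\varphi(x)|$ transfer verbatim'' is true in $L^{2}$ but not in $H^{m}$: multiplication by $\hat{\tau}_{\alpha}(x)$ does not commute with $\langle\hat{\xi}\rangle^{m}$, and the commutator $[\langle\hat{\xi}\rangle^{2m},\hat{\tau}]$ is a nonzero PDO of order $2m-1$. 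That commutator is precisely the one non-scalar ingredient, and it is the step the paper's proof carries out explicitly (it is compact, which is why everything goes through, but ``verbatim'' hides the only thing that has to be checked). Since you also explicitly defer the sharp symbolic computation producing the constant $r_{m}$, and since the $m$-independence ``bootstrap'' is only asserted in its hard direction (gaining regularity for an eigendistribution requires the density/resolvent argument the paper cites from \cite{fred-roy-sjostrand-07}, not just ``contracting dynamics on the cotangent fibres''), the proposal outlines a plausible strategy but does not yet contain a proof.
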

\begin{figure}

\includegraphics[scale=0.35]{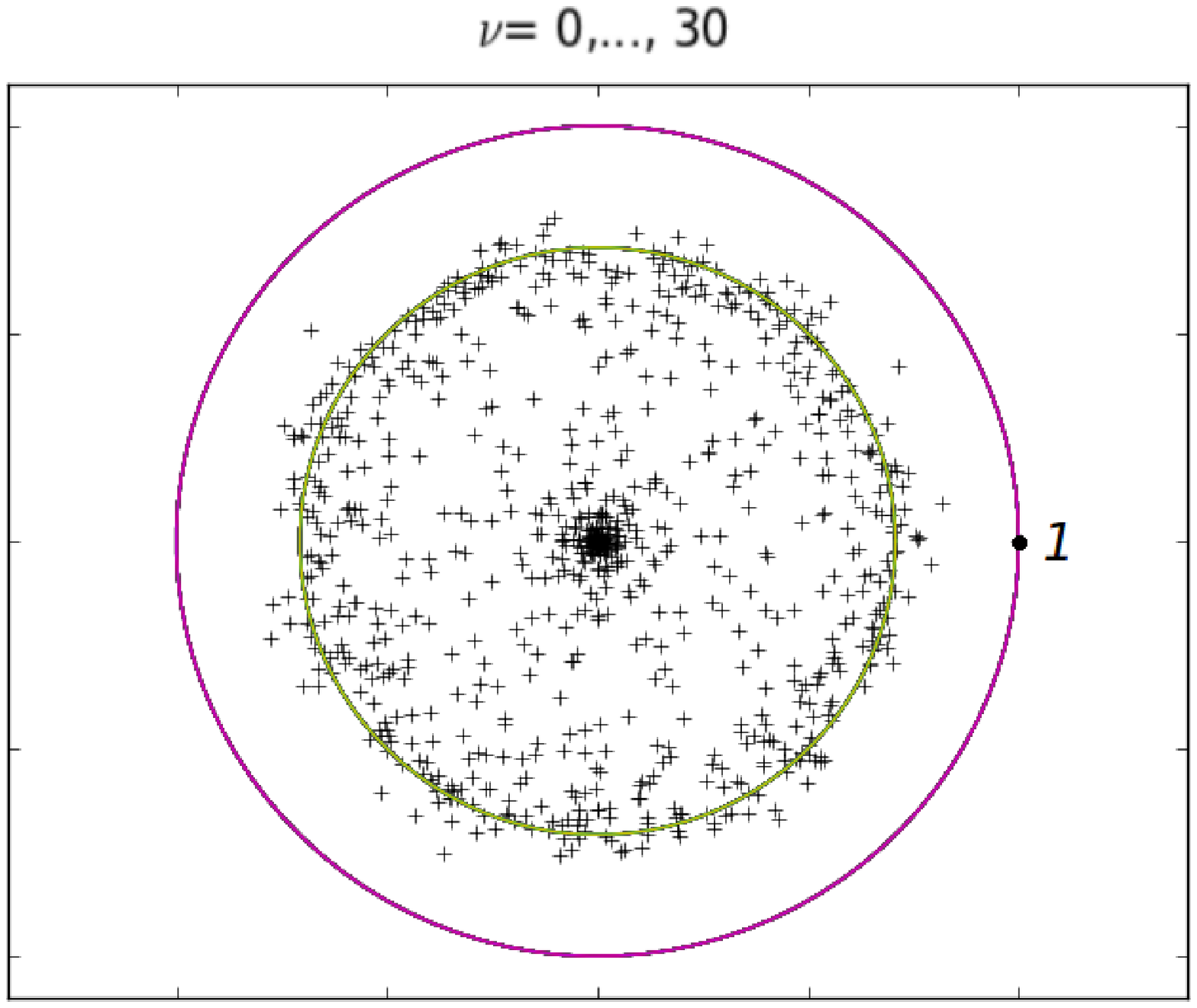}\includegraphics[scale=0.35]{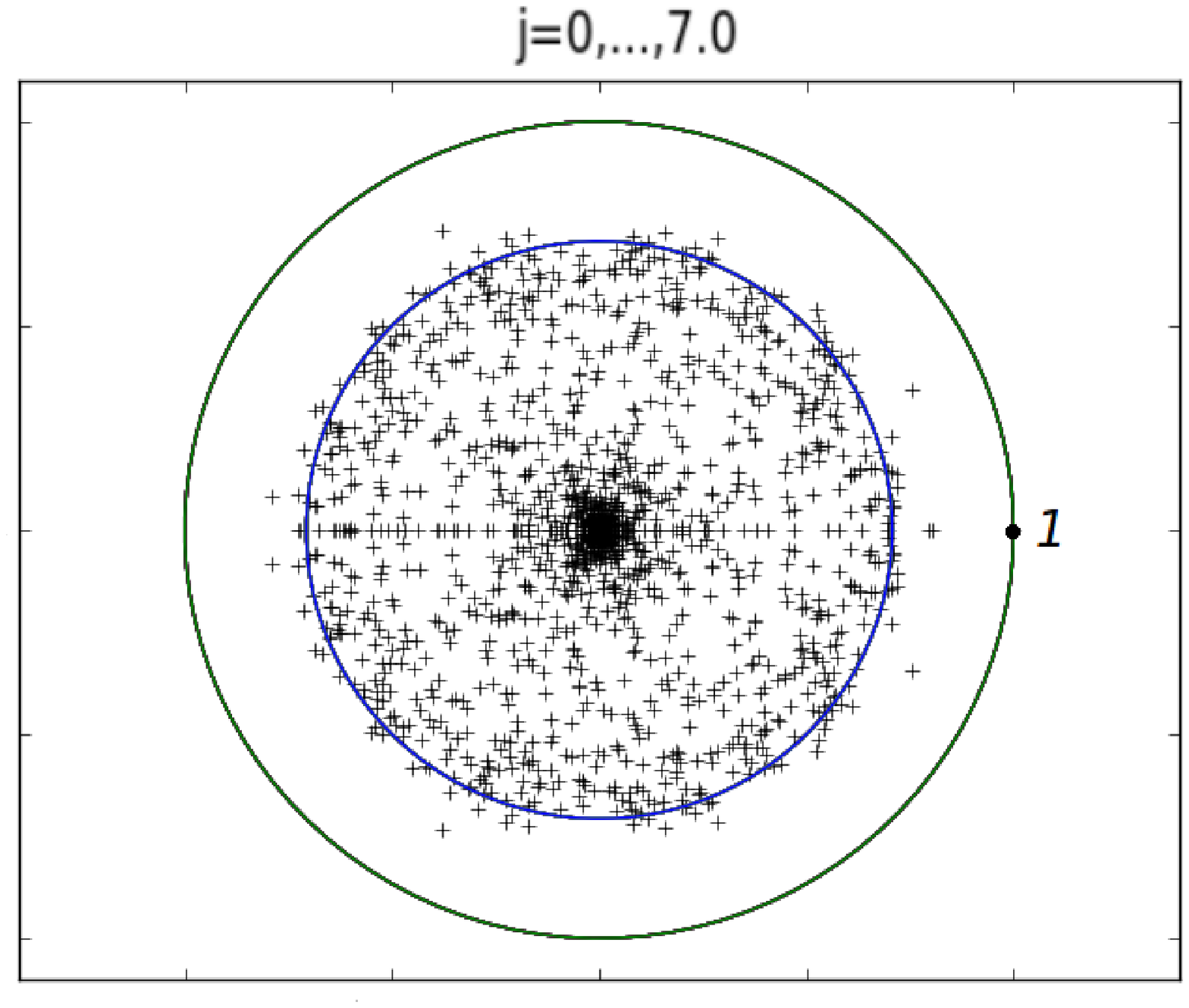}\caption{{\small \label{fig:resonances}Numerical computation of the superposition
of the resonance spectrum of $\hat{F}_{\alpha}$ (\ref{eq:reduced_transfer_op})
for to the first few values of $\alpha,$ for particular skew extension
of the linear map $E(x)=2x\mbox{ mod}1$. On the left $\tau(x)=\frac{1}{2\pi}\cos(2\pi x)$
seen as an element of the Abelian group $\mathbb{R}/\mathbb{Z}$.
On the right $\tau(x)=e^{i\cos(2\pi x)J_{3}}e^{i\theta J_{2}}e^{i\cos(2\pi x)J_{3}}\in\mbox{SU}(2)$
with $iJ_{l};l=1,2,3$ the generators of $\mathfrak{su}(2)$ and $\theta\neq0$
a fixed arbitrary value. In both pictures the inner circle corresponds
to the asymptotic gap of Theorem \ref{thm:gap} and the black dot
is the dominant simple eigenvalue $\lambda=1$.}}

\end{figure}

\begin{proof}
Let $\mathbb{G}$ be any compact Lie group and $\mathcal{D}_{\alpha}$
be some irreducible representation space for $\mathbb{G}$. Fix $m\in\mathbb{R}$
(we will write $H^{m}$, resp. $L^{2}$, for $H^{m}\left(S^{1}\right)$
and $L^{2}\left(S^{1}\right)$) and define $\mathcal{F}_{E}:\varphi\mapsto\varphi\circ E$
and $\left(\hat{\tau}\boldsymbol{\varphi}\right)(x):=\hat{\tau}_{\alpha}(x)\boldsymbol{\varphi}(x),$
so that $\hat{F}_{\alpha}=\hat{\tau}\left(\mathcal{F}_{E}\otimes\mathbb{I}\right)$.
Recall from \cite{fred_gap_09} that $\mathcal{F}_{E}$ restricts
to $\mathcal{F}_{E}:H^{m}\rightarrow H^{m}$ and has its essential
spectral radius bounded by $r_{m}$. Thus $\tilde{Q}_{m}:=\left\langle \hat{\xi}\right\rangle ^{m}\mathcal{F}_{E}\left\langle \hat{\xi}\right\rangle ^{-m}$
is $L^{2}-$continuous with the same essential spectral radius estimate.
Define $\hat{A}_{m}:=\left\langle \hat{\xi}\right\rangle ^{m}\otimes\mathbb{I}$
so that $\hat{A}_{m}^{-1}\left(L^{2}\otimes\mathcal{D}_{\alpha}\right)=H^{m}\otimes\mathcal{D}_{\alpha}$.
Consider $\hat{Q}_{m}=A_{m}\hat{F}_{\alpha}A_{m}^{-1}$ and $\hat{P}=\hat{Q}_{m}^{*}\hat{Q}_{m}$.
We see that $\hat{P}=\hat{A}_{m}^{-1}\hat{B}_{2m}\hat{A}_{m}^{-1}$
with $\hat{B}_{2m}:=\hat{F}_{\alpha}^{*}\hat{A}_{m}^{2}\hat{F}_{\alpha}=\left(\mathcal{F}_{E}^{*}\otimes\mathbb{I}\right)\hat{\tau}^{-1}\hat{A}_{m}^{2}\hat{\tau}\left(\mathcal{F}_{E}\otimes\mathbb{I}\right)$.
Commuting $\hat{A}_{m}^{2}$ and $\hat{\tau}$ we get that $\hat{B}_{2m}=\mathcal{F}_{E}^{*}\left\langle \hat{\xi}\right\rangle ^{2m}\mathcal{F}_{E}\otimes\mathbb{I}+\hat{C}_{2m-1}$,
with $\hat{C}_{2m-1}$ a matrix whose entries are PDOs of order $2m-1$
(cf section \ref{sec:PDOtheory} with $\hbar=1$). Therefore $\hat{P}=\tilde{Q}_{m}^{*}\tilde{Q}_{m}\otimes\mathbb{I}+\hat{C}_{-1}$
with $\hat{C}_{-1}$ a matrix whose entries are PDOs of order $-1$
hence compact. The independence in the value of $m$ for the discrete
eigenvalues is proven in \cite{fred-roy-sjostrand-07}, and is a consequence
of the fact that $H^{m'}$ is dense in $H^{m}$ for any $m'\geq m$.\end{proof}
\begin{thm}
\label{thm:gap}(Tsujii \cite{Tsujii_07} and Faure \cite{fred_gap_09}
for the Abelian case). Let $\mathbb{G}$ be either $\mbox{U}(1)$
or $\mbox{SU}(2).$ Recall that in these cases $\alpha$ denotes respectively
$\nu\in\mathbb{Z}$ or $j\in\frac{1}{2}\mathbb{N}$. For $m<0$ sufficiently
negative, if the map $\widehat{E_{\tau}}$ (\ref{eq:extension}) is
partially captive (definition \ref{partialy-captive}) then the spectral
radius $\hat{F}_{\alpha}:H^{m}\left(S^{1}\right)\otimes\mathcal{D}_{\alpha}\rightarrow H^{m}\left(S^{1}\right)\otimes\mathcal{D}_{\alpha}$
satisfies in the semiclassical limit $\alpha\rightarrow\infty$
\begin{equation}
r_{s}\left(\hat{F}_{\alpha}\right)\leq\frac{1}{\sqrt{E_{\min}}}+o(1),\label{eq:gap}
\end{equation}
 with $E_{\min}:=\lim_{n\rightarrow\infty}\left(\min_{x}\left(E^{n}\right)'(x)\right)^{1/n}>e_{\min}$
the minimal expansion rate of $E$. Also for any $\rho>\frac{1}{\sqrt{E_{\min}}}$
there exists $n_{0}$, $\alpha_{0}>0$, $m_{0}<0$ s.t. $\forall\left|\alpha\right|\geq\alpha_{0},\, m\leq m_{0}$
$\left\Vert \hat{F}_{\alpha}^{n_{0}}\right\Vert _{H_{1/\alpha}^{m}\otimes\mathcal{D}_{\alpha}}\leq\rho^{n_{0}}$,
where $\left\Vert \cdot\right\Vert _{H_{1/\alpha}^{m}}$ stands for
the Semiclassical Sobolev norm $\left\Vert \psi\right\Vert _{H_{1/\alpha}^{m}}:=\sum_{\xi\in2\pi\mathbb{Z}}\left|\left\langle \frac{1}{\alpha}\xi\right\rangle ^{m}\hat{\psi}(\xi)\right|^{2}$.
See figure \ref{fig:resonances}.
\end{thm}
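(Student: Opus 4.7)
The plan is to treat $\hat{F}_{\alpha}$ as a semiclassical operator with small parameter $\hbar=1/\alpha$ and to prove the quantitative iterate bound $\left\Vert \hat{F}_{\alpha}^{n_{0}}\right\Vert _{H_{1/\alpha}^{m}\otimes\mathcal{D}_{\alpha}}\leq\rho^{n_{0}}$ for a suitably large fixed iterate $n_{0}$. The spectral radius bound (\ref{eq:gap}) will then follow from $r_{s}(\hat{F}_{\alpha})\leq\left\Vert \hat{F}_{\alpha}^{n_{0}}\right\Vert ^{1/n_{0}}$, since the semiclassical and ordinary Sobolev norms differ only by factors polynomial in $\alpha$, which are absorbed in the limit $n\to\infty$.

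First I would conjugate by the semiclassical weight $\hat{A}_{m}:=\left\langle \hbar\hat{\xi}\right\rangle ^{m}\otimes\mathbb{I}$ so that boundedness on $H_{\hbar}^{m}\otimes\mathcal{D}_{\alpha}$ becomes $L^{2}$-boundedness of $\hat{A}_{m}\hat{F}_{\alpha}^{n_{0}}\hat{A}_{m}^{-1}$, and expand $\hat{F}_{\alpha}^{n_{0}}$ as a sum over the $k^{n_{0}}$ inverse branches of $E^{n_{0}}$ by iterating (\ref{eq:adjoint}). A semiclassical Egorov theorem applied to $\mathcal{F}_{E^{n_{0}}}=\mathcal{F}_{E}^{n_{0}}$ shows that $\hat{A}_{m}\mathcal{F}_{E^{n_{0}}}\hat{A}_{m}^{-1}$ is microlocally a Fourier integral operator transporting symbols along the canonical lift $\tilde{E}\colon(x,\xi)\mapsto(E(x),\xi/E'(x))$ on $T^{*}S^{1}$, weighted by the amplitude ratio $\bigl(\langle\hbar\xi\rangle/\langle\hbar\xi/(E^{n_{0}})'(y)\rangle\bigr)^{m}$ along each branch $y\in E^{-n_{0}}\{x\}$. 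The unitary factor $\hat{\tau}_{\alpha}(x)$ commutes with $\hat{A}_{m}$ up to a PDO of one order lower, exactly as in the proof of Theorem \ref{thm:discrete_spect}.

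Then I would split cotangent space into an \emph{escaping} region, where after $n_{0}$ iterations the momentum $\xi/(E^{n_{0}})'(y)$ is driven below the semiclassical scale $\hbar^{-1}$, and a \emph{trapped} region near the zero section. In the escaping region the weight ratio gives a factor $\lesssim e_{\min}^{m\,n_{0}}$ per branch, so that summing $k^{n_{0}}$ contributions through Cauchy--Schwarz on $\hat{P}:=\hat{F}_{\alpha}^{*n_{0}}\hat{A}_{m}^{2}\hat{F}_{\alpha}^{n_{0}}$ yields the crude bound $(k\,e_{\min}^{2m})^{n_{0}/2}$ familiar from Theorem \ref{thm:discrete_spect}. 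The key refinement producing the sharper $(E_{\min})^{-n_{0}/2}+o(1)$ comes from the partial captivity hypothesis (Definition \ref{partialy-captive}): the number of $n_{0}$-branches whose $\tilde{E}$-trajectory stays in the trapped region grows sub-exponentially in $n_{0}$, so the branch count $k^{n_{0}}$ is effectively replaced by $(E^{n_{0}})'_{\min}(1+o(1))$; choosing $n_{0}$ with $(E^{n_{0}})'_{\min}\approx E_{\min}^{n_{0}}$ then produces the announced gap.

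For $\mathbb{G}=\mbox{SU}(2)$ the matrix factor $\hat{\tau}_{j}(x)\in U(\mathcal{D}_{j})$ of dimension $2j+1$ is handled by introducing Perelomov spin coherent states $|z\rangle_{j}$, $z\in S^{2}\simeq\mbox{SU}(2)/\mbox{U}(1)$. These satisfy the exact intertwining $\hat{\tau}_{j}(x)|z\rangle_{j}=e^{ij\phi(x,z)}|R_{\tau(x)}z\rangle_{j}$, with $R_{\tau(x)}\colon S^{2}\to S^{2}$ the rotation induced by $\tau(x)$, together with a WKB overlap concentrating on the diagonal as $j\to\infty$. Using the associated overcomplete resolution of the identity on $\mathcal{D}_{j}$, the operator $\hat{F}_{j}$ becomes, up to $O(j^{-1/2})$ errors, unitarily equivalent to a scalar semiclassical transfer operator on $L^{2}(S^{1}\times S^{2})$ quantizing the extended map $(x,z)\mapsto(E(x),R_{\tau(x)}z)$ with phase $j\phi$, to which the abelian argument above applies on the enlarged phase space $T^{*}S^{1}\times S^{2}$. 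The main obstacle is controlling the accumulation of semiclassical remainders through the $n_{0}$ iterations while $\alpha\to\infty$: each Egorov step and each coherent-state insertion contributes an $O(\hbar^{1/2})$ error, so $n_{0}$ must be large enough to realise the asymptotic expansion rate $E_{\min}$ yet chosen independently of $\alpha$ once $m$ is fixed. Exploiting the partial captivity hypothesis to restrict the effective sum to sub-exponentially many non-escaping branches is precisely what makes this competition work, and is the main new technical input in the $\mbox{SU}(2)$ setting beyond the $\mbox{U}(1)$ analysis of \cite{fred_gap_09}.
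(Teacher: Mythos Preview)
Your overall architecture --- conjugate by a semiclassical weight, form $\hat P^{(n)}=\hat Q_m^{*n}\hat Q_m^{n}$, bound its principal symbol pointwise, then invoke the $L^{2}$-continuity theorem --- is exactly the paper's. The gap is in how you handle $\hat\tau_\alpha$. You assert that ``the unitary factor $\hat\tau_\alpha(x)$ commutes with $\hat A_m$ up to a PDO of one order lower, exactly as in the proof of Theorem~\ref{thm:discrete_spect}.'' That is valid when $\hbar=1$ is fixed (which is how Theorem~\ref{thm:discrete_spect} proceeds), but it fails in the semiclassical regime $\hbar=1/\alpha$ relevant here. With $\hat A_m=\mathrm{Op}_\hbar(\langle\xi\rangle^{m})$ and $\hat\tau_\nu=e^{i\nu\Omega}=e^{i\Omega/\hbar}$, Egorov gives $\hat\tau_\nu^{-1}\hat A_m\hat\tau_\nu=\mathrm{Op}_\hbar\bigl(\langle\xi+\Omega'(x)\rangle^{m}\bigr)+O(\hbar)$: conjugation shifts $\xi$ by an $O(1)$ amount, so the commutator is one $\xi$-order lower but \emph{not} small in $\hbar$. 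Thus $\hat\tau_\alpha$ is not a lower-order perturbation but a genuine FIO whose canonical transformation $(x,\xi)\mapsto(x,\xi+\Omega'(x))$ enters at the principal level (and similarly $(x,\xi,\mathbf n)\mapsto(x,\xi+H(x,\mathbf n),R_{\tau(x)}\mathbf n)$ for $\mathrm{SU}(2)$). Consequently the weight ratio in the symbol of $\hat P^{(n)}$ is $A_m^{2}(\xi_{x,\epsilon})/A_m^{2}(\xi)$ (resp.\ $A_m^{2}(\xi_{x,\mathbf n,\epsilon})/A_m^{2}(\xi)$), with $\xi_{x,\epsilon}$ the image of $\xi$ under the \emph{full} map $F_\epsilon$ of (\ref{eq:F_epsilon})--(\ref{eq:F_epsilonNA}), not $E'_\epsilon(x)\xi$. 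Your formula $\bigl(\langle\hbar\xi\rangle/\langle\hbar\xi/(E^{n_0})'(y)\rangle\bigr)^{m}$ and your reference to ``$\tilde E$-trajectories'' miss precisely this shift. This is not cosmetic: the trapped set $K_{\mathbb G}$ and the partially captive hypothesis (Definition~\ref{partialy-captive}) are formulated for $F_\epsilon$; under $\tilde E_\epsilon$ alone every $\xi\neq0$ escapes and the hypothesis becomes vacuous, so your counting of ``non-escaping branches'' is not the $\mathcal N(n)$ of the paper.

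The paper therefore never tries to separate $\hat\tau_\alpha$ from $\mathcal F_E$. It treats $\hat F_\alpha$ as a single FIO with $k$-valued canonical map $F_\epsilon$, applies the Egorov theorems (Lemmas~\ref{lem:Egorov-a} and~\ref{lem:Egorov-na}) directly to get
\[
P^{(n)}(x,\xi,\mathbf n)=\sum_{\epsilon\in\mathcal A^{n}}\frac{1}{E'_\epsilon(x)}\,\frac{A_m^{2}(\xi_{x,\mathbf n,\epsilon})}{A_m^{2}(\xi)}\quad\bmod\ \hbar\,S_0^{-1},
\]
and bounds this by a three-case split based on Lemma~\ref{lem:fuite,}: trajectories with $|\xi|>R$ or which have already left $Z_{\mathbb G}$ contribute $\kappa^{2m}$, and only the at most $\mathcal N(n-1)$ captive trajectories contribute $1$, yielding $P^{(n)}\le E_{\min}^{-n}\bigl(k^{n}\kappa^{2m}+\mathcal N(n-1)\bigr)+O_n(\hbar)$. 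One then lets $j\to\infty$, $m\to-\infty$, $n\to\infty$ in that order. For $\mathrm{SU}(2)$ no reduction to a scalar operator on $L^{2}(S^{1}\times S^{2})$ is used (your ``unitary equivalence'' via coherent states would in any case only be an isometry onto a proper holomorphic subspace); instead one works throughout with the mixed anti-Wick/Weyl quantization $\mathrm{Op}_j$ of (\ref{eq:full_quantization}) on $T^{*}S^{1}\times S^{2}$ and its symbolic calculus (Theorem~\ref{thm:(cahen-laurent-charles)}), which already delivers the required Egorov statement in matrix form.
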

As explained in detail in \cite{fred_gap_09}, from this and the decomposition
(\ref{eq:decomp}) one can deduce the mixing property of $\widehat{E_{\tau}}$.
We refer to the notations defined in the introduction. 
\begin{cor}
\label{cor:Mixing}Exponential Mixing of $\left(\widehat{E_{\tau}},S^{1}\times\mathbb{G}\right)$
for $\mathbb{G}\equiv\mbox{U}(1),\,\mbox{SU}(2)$. If the conclusion
of Theorem \ref{thm:gap} hold, than for any $\rho>\frac{1}{\sqrt{E_{\min}}}$
there exists a finite rank operator $\hat{k}$ such that, for any
observables \textup{$\Phi,\Psi\in C^{\infty}\left(S^{1}\times\mathbb{G}\right)$}
\[
\left(\widehat{F}_{\tau}^{n}\Psi;\Phi\right)_{L^{2}\left(\hat{\mu}_{\tau}\right)}=\left(\hat{k}^{n}\Psi;\Phi\right)_{L^{2}\left(\hat{\mu}_{\tau}\right)}+\mathcal{O}(\rho^{n}).
\]
Let $\Pi_{0}$ be the projector on $\mathcal{D}_{0}$. If $\lambda=1$
is the only eigenvalue of $\widehat{F}_{\tau}$ on the unit circle%
\footnote{which is always the case if $\tau$ is not a co-boundary: \cite{Tsujii_07}
Appendix A%
} than $\hat{k}$ admits a spectral decomposition $\hat{k}=\left|1\right\rangle \left\langle \mu\right|\otimes\Pi_{0}+\hat{r},$
$r_{s}\left(\hat{r}\right)<1$ with $\left|1\right\rangle \left\langle \mu\right|(\varphi)(x):=\left\langle \mu\right|\left.\varphi\right\rangle _{L^{2}(S^{1})}$
and $\mu$ the SRB measure of $E$. Thus
\[
C_{\Psi,\Phi}(n)-\int\bar{\Psi}d\hat{\mu}_{\tau}\int\Phi d\hat{\mu}_{\tau}=\left(\hat{r}^{n}\Psi;\Phi\right)_{L^{2}\left(\hat{\mu}_{\tau}\right)}+\mathcal{O}(\rho^{n}).
\]
 thus $\left(\widehat{E_{\tau}},S^{1}\times\mathbb{G}\right)$ is
exponentially mixing.\end{cor}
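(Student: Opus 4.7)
The strategy is to combine Theorem \ref{thm:gap} (asymptotic spectral gap for large $\alpha$) with Theorem \ref{thm:discrete_spect} (discrete spectrum outside a fixed disc for each $\alpha$) via the harmonic decomposition (\ref{eq:decomp}). For $\Psi,\Phi\in C^{\infty}(S^{1}\times\mathbb{G})$ I would first expand via Peter--Weyl, $\Psi=\sum_{\alpha}\boldsymbol{\Psi}_{\alpha}$, $\Phi=\sum_{\alpha}\boldsymbol{\Phi}_{\alpha}$ with $\boldsymbol{\Psi}_{\alpha},\boldsymbol{\Phi}_{\alpha}\in L^{2}(S^{1})\otimes\mathcal{D}_{\alpha}$, and use that smoothness of $\Psi,\Phi$ forces rapid decay of these components in $\alpha$. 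Because $\widehat{F}_{\tau}$ preserves the decomposition, the correlation function rewrites as $\sum_{\alpha}\dim_{\mathbb{C}}(\mathcal{D}_{\alpha})(\hat{F}_{\alpha}^{n}\boldsymbol{\Psi}_{\alpha};\boldsymbol{\Phi}_{\alpha})_{L^{2}(S^{1})\otimes\mathcal{D}_{\alpha}}$.

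Fix $\rho>1/\sqrt{E_{\min}}$ and let $\alpha_{0},n_{0},m_{0}$ be as in Theorem \ref{thm:gap}. For $|\alpha|\ge\alpha_{0}$ the bound $\|\hat{F}_{\alpha}^{n_{0}}\|_{H_{1/\alpha}^{m_{0}}\otimes\mathcal{D}_{\alpha}}\le\rho^{n_{0}}$ iterates and, via duality between $H_{1/\alpha}^{m_{0}}$ and $H_{1/\alpha}^{-m_{0}}$ combined with the Peter--Weyl decay of $\boldsymbol{\Psi}_{\alpha},\boldsymbol{\Phi}_{\alpha}$, controls the ``high-frequency'' tail by $\mathcal{O}(\rho^{n})$ uniformly in $\alpha$. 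For the finitely many $|\alpha|<\alpha_{0}$, Theorem \ref{thm:discrete_spect} provides a discrete spectrum outside any disc of radius exceeding $\rho$ after taking $m$ sufficiently negative, so I decompose $\hat{F}_{\alpha}=\hat{k}_{\alpha}+\hat{r}_{\alpha}$, with $\hat{k}_{\alpha}$ the finite-rank spectral projector of $\hat{F}_{\alpha}$ onto eigenvalues of modulus $\geq\rho$ and $r_{s}(\hat{r}_{\alpha})<\rho$. Setting $\hat{k}:=\oplus_{|\alpha|<\alpha_{0}}\dim_{\mathbb{C}}(\mathcal{D}_{\alpha})\hat{k}_{\alpha}$, a finite-rank operator, then yields the first identity.

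For the second part, the hypothesis that $\lambda=1$ is the only eigenvalue of $\widehat{F}_{\tau}$ on the unit circle localizes the unit-circle spectrum of $\hat{k}$ to the trivial block $\alpha=0$, where $\hat{F}_{0}=\mathcal{F}_{E}$ is the Koopman operator of the base map $E$ alone. In this scalar setting the eigenvalue $1$ is simple, with right eigenvector the constant $|1\rangle$ and left eigenvector the SRB density $\mu$, a classical fact for Ruelle transfer operators of expanding maps. Isolating this rank-one piece in the spectral decomposition of $\hat{k}$ would give $\hat{k}=|1\rangle\langle\mu|\otimes\Pi_{0}+\hat{r}$ with $r_{s}(\hat{r})<1$, and $(|1\rangle\langle\mu|\otimes\Pi_{0})\Psi$ paired against $\Phi$ in $L^{2}(\hat{\mu}_{\tau})$ reproduces exactly $\int\bar{\Psi}\,d\hat{\mu}_{\tau}\int\Phi\,d\hat{\mu}_{\tau}$, producing the stated estimate on $C_{\Psi,\Phi}(n)$.

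The main obstacle is the $\alpha$-uniformity: Theorem \ref{thm:gap} provides a bound in the $\alpha$-dependent semiclassical norm $H_{1/\alpha}^{m_{0}}$, which must be transferred to a bound compatible with the fixed $L^{2}(\hat{\mu}_{\tau})$ pairing underlying the correlation function, while absorbing the dimensional growth $\dim_{\mathbb{C}}(\mathcal{D}_{\alpha})\sim 2j+1$ in the $\mathrm{SU}(2)$ case. This reduces to a careful quantification of how $C^{\infty}$-smoothness of $\Psi,\Phi$ translates into sufficiently rapid decay of their Peter--Weyl components measured in the dual norm $H_{1/\alpha}^{-m_{0}}$; the Abelian version of this argument in \cite{fred_gap_09} should serve as a template for the non-Abelian case.
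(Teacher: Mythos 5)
Your reconstruction is correct and is, in outline, exactly the argument the paper delegates to \cite{fred_gap_09} (subsection 2.5): split the Peter--Weyl sum at $\alpha_0$, use the uniform operator bound of Theorem \ref{thm:gap} in the semiclassical Sobolev norm to absorb the tail $|\alpha|\geq\alpha_0$ into $\mathcal{O}(\rho^n)$, and for the finitely many $|\alpha|<\alpha_0$ use the discreteness of the Ruelle spectrum (Theorem \ref{thm:discrete_spect} with $m$ negative enough) to peel off a finite-rank spectral projector $\hat{k}_\alpha$; the second half then follows from isolating the simple eigenvalue $1$ in the $\alpha=0$ block, whose Koopman operator $\mathcal{F}_E$ has spectral projection $\left|1\right\rangle\left\langle\mu\right|$. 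The ``obstacle'' you flag is genuine but standard: for $\Psi$ smooth and $m<0$ fixed, $\left\Vert\boldsymbol{\Psi}_\alpha\right\Vert_{H_{1/\alpha}^{m}}$ and $\left\Vert\boldsymbol{\Phi}_\alpha\right\Vert_{H_{1/\alpha}^{-m}}$ are bounded (indeed rapidly decaying in $\alpha$ by smoothness on the group), while $\dim_{\mathbb{C}}\left(\mathcal{D}_\alpha\right)$ grows only polynomially, so the sum over $\alpha\geq\alpha_0$ converges with the claimed $\mathcal{O}(\rho^n)$ bound; this is precisely the quantification worked out in \cite{fred_gap_09} for $\mbox{U}(1)$ and it carries over unchanged.
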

\begin{proof}
\cite{fred_gap_09} subsection 2.5.\end{proof}
\begin{defn}
For any $\lambda>0$ consider $\mathcal{O}_{\lambda}=\left\{ m\in\mathbb{R}\,|\, r_{m}\leq\lambda\right\} $.
The set of \emph{all resonances} of $\hat{F}_{\alpha}$ can be defined
as 
\[
\mbox{Res}\left(\hat{F}_{\alpha}\right):=\lim_{\lambda\rightarrow0}\bigcap_{m\in\mathcal{O}_{\lambda}}\mbox{spect}\left(\hat{F}_{\alpha}|_{H^{m}\left(S^{1}\right)\otimes\mathcal{D}_{\alpha}}\right).
\]

\end{defn}
In the spirit of \cite{sjoestrand_90} and \cite{zworski_lin_guillope_02}
we show:
\begin{thm}
\label{thm:weyl_law}Let $\mathbb{G}$ be either $\mbox{U}(1)$ or
$\mbox{SU}(2)$ as in Theorem \ref{thm:gap}. For any $\epsilon>0$
let $D_{\epsilon}^{\mathbb{C}}$ be the open disc in $\mathbb{C}$
of radius $\epsilon$. Then as $\alpha\rightarrow\infty$,
\begin{equation}
\sharp\left\{ \mbox{Res}\left(\hat{F}_{\alpha}\right)\cap\mathbb{C}\backslash D_{\epsilon}^{\mathbb{C}}\right\} =\mathcal{O}\left(\left|\alpha\right|^{\frac{1}{2}\dim\left(K_{\mathbb{G}}\right)+0}\right)\label{eq:fractal_weyl}
\end{equation}
 where $\dim\left(K_{\mathbb{G}}\right)$ is the upper Minkowski dimension
(definition \ref{The-upper-Minkowski}) of the trapped set $K_{\mathbb{G}}$
of the canonical map associated to $\hat{F}_{\alpha}$. For $\mathbb{G}=\mbox{U}(1)$
this map (\ref{eq:F_epsilon}) is defined on $T^{*}S^{1}$ and $K_{U(1)}$
is a compact subset of dimension between 1 and 2. For $\mathbb{G}=\mbox{SU}(2)$
this map (\ref{eq:F_epsilonNA}) is defined on $T^{*}S^{1}\times S^{2}$
and $K_{SU(2)}$ is a compact subset of dimension between 3 and 4.
This behaviour is tested numerically on figure \ref{fig:weyl-law}.\end{thm}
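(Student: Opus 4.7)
The plan is to adapt to this semiclassical setting the trapped-set strategy of Sjöstrand \cite{sjoestrand_90} and Lin-Guillopé-Zworski \cite{zworski_lin_guillope_02}, with $\hbar=1/\left|\alpha\right|$ playing the role of Planck's constant. The essential ingredients are (a) an escape function on the relevant classical phase space, (b) a semiclassical calculus in which conjugation by $e^{G}$ is controlled through an Egorov-type theorem, and (c) a counting of coherent-state cells in a neighborhood of the trapped set $K_{\mathbb{G}}$.

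I would first reduce the counting to a single decomposition. By Theorem \ref{thm:gap}, fix $\rho$ with $1/\sqrt{E_{\min}}<\rho<\epsilon$ and integers $n_{0}$, $\alpha_{0}$ such that $\bigl\Vert\hat{F}_{\alpha}^{n_{0}}\bigr\Vert_{H_{1/\alpha}^{m}\otimes\mathcal{D}_{\alpha}}\leq\rho^{n_{0}}$ for $\left|\alpha\right|\geq\alpha_{0}$. It then suffices to produce a finite-rank operator $\hat{K}$ with $\bigl\Vert\hat{F}_{\alpha}^{n_{0}}-\hat{K}\bigr\Vert<\epsilon^{n_{0}}$: the Weyl inequality for singular values of $\hat{F}_{\alpha}^{n_{0}}$ then bounds the number of resonances of $\hat{F}_{\alpha}$ outside $D_{\epsilon}^{\mathbb{C}}$ by $\mathrm{rank}(\hat{K})$.

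The construction of $\hat{K}$ proceeds as follows. The relevant phase space is $T^{*}S^{1}$ for $\mathrm{U}(1)$ and $T^{*}S^{1}\times S^{2}$ for $\mathrm{SU}(2)$, where $S^{2}$ is the coadjoint orbit of the spin-$j$ representation quantized through Bloch coherent states; the canonical map $\tilde{F}$ combines the lift of the inverse branches of $E$ with the $\mathrm{SO}(3)$ rotation induced pointwise by $\hat{\tau}_{\alpha}$. I would build an escape function $G$, smooth and bounded near $K_{\mathbb{G}}$ and equal to $-M\log\langle\xi\rangle$ away from it, such that $G\circ\tilde{F}-G\leq -C<0$ outside an arbitrarily small neighborhood of $K_{\mathbb{G}}$. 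Quantizing $e^{G}$ as a composite $\hbar$-pseudodifferential / Berezin-Toeplitz operator $\widehat{e^{G}}$ and iterating Egorov $n_{0}$ times, the conjugate $\widehat{e^{G}}\,\hat{F}_{\alpha}^{n_{0}}\,\widehat{e^{-G}}$ has principal symbol dominated by $e^{G\circ\tilde{F}^{n_{0}}-G}$, which is smaller than $\epsilon^{n_{0}}$ outside an $\eta$-neighborhood of $K_{\mathbb{G}}$. A microlocal cutoff then yields the splitting $\hat{F}_{\alpha}^{n_{0}}=\hat{K}+\hat{R}$ with $\hat{K}$ microlocalized in $B_{\eta}(K_{\mathbb{G}})$ and $\bigl\Vert\hat{R}\bigr\Vert<\epsilon^{n_{0}}$ up to $\mathcal{O}(\hbar)$ errors. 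Finally, each coherent-state cell on the $d$-dimensional phase space has symplectic volume $(2\pi\hbar)^{d/2}$, so standard Toeplitz rank bounds give
\[
\mathrm{rank}(\hat{K})\leq C\,\mathrm{Vol}\bigl(B_{\eta}(K_{\mathbb{G}})\bigr)\,\hbar^{-d/2}.
\]
Choosing $\eta\sim\hbar^{1/2}$ and invoking the definition of upper Minkowski dimension, $\mathrm{Vol}(B_{\eta}(K_{\mathbb{G}}))\leq C_{\delta}\eta^{d-\dim K_{\mathbb{G}}-\delta}$ for every $\delta>0$, yields $\mathrm{rank}(\hat{K})=\mathcal{O}\bigl(\left|\alpha\right|^{\dim K_{\mathbb{G}}/2+0}\bigr)$, which is exactly (\ref{eq:fractal_weyl}).

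The main obstacle lies in the $\mathrm{SU}(2)$ case, where the escape function and the quantization must be constructed simultaneously across two semiclassical factors sharing the parameter $\hbar=1/j$: the standard $T^{*}S^{1}$ $\hbar$-PDO calculus and the Berezin-Toeplitz calculus of spin-$j$ coherent states on $S^{2}$. One must develop a joint symbolic calculus for mixed operators of the form $\mathrm{Op}_{1/j}(a)\otimes T_{j}(b)$, establish a composite Egorov theorem along the partially hyperbolic map $\tilde{F}$, and check uniformity of all remainders in $\alpha$. It is precisely the neutral $S^{2}$-direction (isometric fiber dynamics) that enlarges the trapped set by a factor $S^{2}$ and is responsible for the higher fractal exponent in the non-Abelian case.
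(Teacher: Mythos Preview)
Your overall strategy---escape function, conjugation, Egorov, microlocal cutoff near $K_{\mathbb{G}}$, volume count---matches the paper's. But there is a scale inconsistency that undercuts the fractal exponent. You fix $G$ and $n_{0}$ independently of $\hbar$, so the region where the conjugated symbol exceeds $\epsilon^{n_{0}}$ is a \emph{fixed} neighborhood of $K_{\mathbb{G}}$ determined by $G$; you cannot afterwards ``choose $\eta\sim\hbar^{1/2}$''. With a fixed neighborhood $U$ the rank bound is only $\mathcal{O}(\hbar^{-d/2}\,\mathrm{Vol}\,U)=\mathcal{O}(|\alpha|^{d/2})$, the crude Weyl count with $d=\dim M_{\mathbb{G}}$, not the fractal one. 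The paper resolves this by making the escape function itself $\hbar$-dependent: it averages a fixed weight $A_{m}$ over $n(\hbar)\sim\mu\log(\hbar^{-1})/\log e_{\min}$ forward iterates,
\[
A_{m,\mu}^{\mathbb{G}}:=\frac{1}{k^{n}}\sum_{\epsilon\in\mathcal{A}^{n}}|E'_{\epsilon}|^{-m}\,A_{m}\circ F_{\epsilon}^{n},
\]
so that $e_{\min}^{-n}\sim\hbar^{\mu}$ and $A_{m,\mu}^{\mathbb{G}}$ lands in the exotic class $\hbar^{\mu m}S_{\mu}^{m}$ with $\mu<\tfrac12$, decreasing strictly along every branch outside an $\hbar^{\mu}$-neighborhood of $K_{\mathbb{G}}$ (Lemma~\ref{lem:Escape-function}). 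One then conjugates a \emph{single} copy of $\hat{F}_{\alpha}$ by $\mathrm{Op}(A_{m,\mu}^{\mathbb{G}})$, forms $\hat{P}_{\mu}=\hat{Q}_{m,\mu}^{*}\hat{Q}_{m,\mu}$, and reads off the splitting $P_{\mu}=K_{\mu}+R_{\mu}$ with $K_{\mu}$ supported in $K_{\mathbb{G}}^{C\hbar^{\mu}}$ directly from the symbol. The constraint $\mu<\tfrac12$ (needed for the calculus to survive) is precisely what produces the ``$+0$'' in the exponent.

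Two further points. First, invoking Theorem~\ref{thm:gap} is both unnecessary and self-defeating: Theorem~\ref{thm:weyl_law} does not assume partial captivity, and in any case the norm bound $\|\hat{F}_{\alpha}^{n_{0}}\|\leq\rho^{n_{0}}$ with $\rho<\epsilon$ would already force the resonance count outside $D_{\epsilon}^{\mathbb{C}}$ to be zero. Second, the relevant Egorov formula (Lemmas~\ref{lem:Egorov-a} and~\ref{lem:Egorov-na}) is not a simple pull-back: $\hat{F}_{\alpha}^{*}\,\mathrm{Op}(a)\,\hat{F}_{\alpha}$ has principal symbol $\sum_{\epsilon}(E'\circ E_{\epsilon}^{-1})^{-1}\,a\circ F_{\epsilon}$, a weighted sum over the $k$ inverse branches, so ``principal symbol dominated by $e^{G\circ\tilde{F}^{n_{0}}-G}$'' is not literally correct. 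The escape-function averaging above is exactly what converts this branching transfer into a uniform pointwise decay estimate.
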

\begin{rem}
If we replace the arbitrary observables $\Phi,\Psi$ of Corollary
\ref{cor:Mixing} by functions $\Phi_{\alpha}(x,g),\Psi_{\alpha}(x,g)$
decomposing only on $C^{\infty}\otimes\mathcal{D}_{\alpha}$ (at $x$
fixed, eigenvalues of the Laplace operator on $\mathbb{G}$ of eigenvalue
$\lambda_{\alpha}$ \cite{taylor_tome1} p. 550) then, for any $\epsilon>0$,
there exists a finite rank operator $\hat{k}_{\alpha}$, s.t.
\[
\left(\widehat{F}_{\tau}^{n}\Psi_{\alpha};\Phi_{\alpha}\right)_{L^{2}\left(\hat{\mu}_{\tau}\right)}=\left(\hat{k}_{\alpha}^{n}\Psi_{\alpha};\Phi_{\alpha}\right)_{L^{2}\left(\hat{\mu}_{\tau}\right)}+\mathcal{O}(\epsilon^{n}).
\]
 with an estimation on the rank given by Theorem \ref{thm:weyl_law},
growing as $\alpha$ grows. 
\end{rem}
\begin{figure}
\includegraphics[scale=0.5]{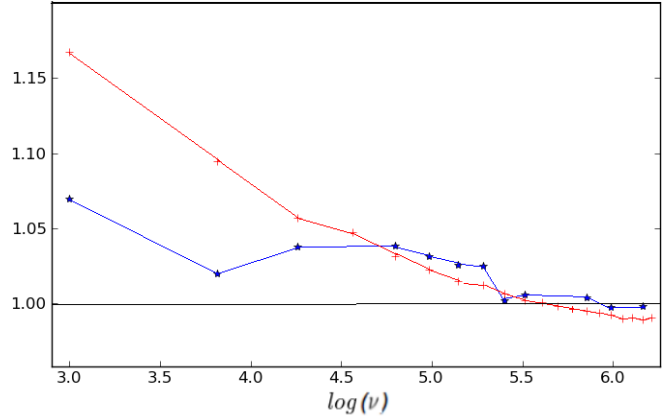}\caption{{\small \label{fig:weyl-law}Here we restrict our attention to the
Abelian skew extension of the linear map $E(x)=2x$ with $\tau(x)=\frac{1}{2\pi}\cos(2\pi x)\in\mathbb{R}/\mathbb{Z}$.
The +signs represent $\log(N_{\nu})/\log(\nu)$ with $N_{\nu}$ the
number of resonances of $\hat{F}_{\nu}$ larger in modulus than some
fixed $\epsilon>0$. The stars represent $1+\log Vol(K_{\nu})/\log\left(\nu\right)$
with $K_{\nu}$ a numerical approximation of the volume of a $\nu^{-\frac{1}{2}}-$neighbourhood
of the associated trapped set $K$ for $20\leq\nu\leq600$. If Theorem
\ref{thm:weyl_law} where sharp, then it would imply that both quantities
should converge at same speed to $\frac{1}{2}\dim K\sim1$ (see Lemma
\ref{lem:weyl_Q_m_mu}). Thus numerics suggest that it is the case.}}
\end{figure}

\section{$\hbar-$Pseudo differential theory\label{sec:PDOtheory}}

Before giving the proofs of Theorem \ref{thm:gap} and \ref{thm:weyl_law}
we first recall some basic facts from semiclassical analysis. This
will give us the opportunity to fix some notations but the reader
familiar with this theory can very well skip this section. We refer
to \cite{zworski-03,folland-88,martinez-01,dimassi-99}.

The symplectic structure of $\left(\mathbb{R}^{2d},\omega=dx\wedge d\xi\right)$
induces a Poisson algebra on the set of classical observables $C^{\infty}\left(\mathbb{R}^{2d};\mathbb{R}\right)$
if one defines the Poisson bracket of two such functions as $\left\{ f,g\right\} :=\omega\left(X_{f},X_{g}\right)$
with $X_{f}$ the Hamiltonian vector field generated by $f$. PDO
theory stems from an attempt to transpose such an algebra to the set
of formally self-adjoint operators acting on $L^{2}\left(\mathbb{R}^{d}\right)$,
the latter interpreted as the quantum Hilbert space. To some observable
$f$ we associate, $\forall\hbar>0$, and $\varphi\in\mathcal{S}$,
its Weyl quantization:
\begin{equation}
\mbox{Op}_{\hbar}^{w}(f)\varphi(x)=\frac{1}{\left(2\pi\hbar\right)^{d}}\int e^{\frac{i}{\hbar}\xi(x-y)}f\left(\frac{x-y}{2},\xi\right)\varphi(y)dyd\xi\label{eq:weyl_quant}
\end{equation}
 $f$ is than said to be the Weyl symbol of $\mbox{Op}_{\hbar}^{w}(f):\mathcal{S}\rightarrow\mathcal{S}'$.
This expression makes sense if $f$ is not real but if it is the operator
is formally self-adjoint. If one restricts to the following class
of symbols, given $m\in\mathbb{R}$ and $0\leq\mu<\frac{1}{2}$
\begin{equation}
S_{\mu}^{m}:=\left\{ a_{\hbar}\in C^{\infty}\left(\mathbb{R}^{2d}\right)|\,\left|\partial_{x}^{\alpha}\partial_{\xi}^{\beta}a_{\hbar}\right|\leq C_{\alpha\beta}\hbar^{-\mu(|\alpha|+|\beta|)}\left\langle \xi\right\rangle ^{m-|\beta|}\right\} \label{eq:S^m_nu}
\end{equation}
 then, for any $a_{\hbar}\in S_{\mu}^{m}$, $\mbox{Op}_{\hbar}^{w}(a_{\hbar})$
maps $\mathcal{S}$ to $\mathcal{S}$ and extends continuously to
a map $\mathcal{S}'\rightarrow\mathcal{S}'$. We write $OPS_{\mu}^{m}$
for the set of operators associated to $S_{\mu}^{m}$. This class
allows the construction of the Weyl quantization over smooth compact
manifolds, with (\ref{eq:weyl_quant}) taken in a local sense \cite{taylor_tome2}.
Symbols are then well defined as functions of the cotangent bundle.
In our case the manifold is the unit circle%
\footnote{In our context, since $S^{1}$ is an affine manifold it is not necessary
to restrict to such a class. But since our techniques allow it, and
since $S^{m}$ is standard we will not work on the most general class
allowed.%
} and its cotangent bundle the cylinder $T^{*}S^{1}.$ In the following
we drop the subscript $\hbar$ for symbols in $S_{\mu}^{m}$ even-though
they generally are one-parameter families of functions. 
\begin{lem}
\label{lem:L^2continuity}($L^{2}-$continuity theorem 5.1 in \cite{zworski-03}).
If $a\in S_{\mu}^{0}$ than for any $\hbar>0,$ $\mbox{\emph{Op}}_{\hbar}^{w}(a)$
extends to a continuous operator on $L^{2}$. As $\hbar$ goes to
zero, $\left\Vert \mbox{\emph{Op}}_{\hbar}^{w}(a)\right\Vert _{L^{2}\rightarrow L^{2}}\leq\sup|a|+\mathcal{O}(\hbar^{1-2\mu})$. 
\end{lem}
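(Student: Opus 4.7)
The plan is to establish the sharp $L^{2}$-bound via the semiclassical Bargmann--FBI representation, in which $\mathrm{Op}_{\hbar}^{w}(a)$ becomes conjugate to multiplication by a Gaussian-smoothed symbol $\tilde a := a \ast G_{\hbar}$ at wave-packet scale $\sqrt{\hbar}$, modulo a remainder. The gain $\hbar^{1-2\mu}$ in the error arises precisely from the fact that this wave-packet scale $\sqrt{\hbar}$ is finer than the symbol variation scale $\hbar^{\mu}$, which is where the condition $\mu<\frac{1}{2}$ enters in an essential way.

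First I would introduce the semiclassical FBI transform $T_{\hbar}:L^{2}(\mathbb{R}^{d})\to L^{2}(\mathbb{R}^{2d})$ with Gaussian wave-packet kernel at scale $\sqrt{\hbar}$, normalized so that $T_{\hbar}^{*}T_{\hbar}=\mathrm{Id}$, and use its periodized analogue on $S^{1}$. Since $T_{\hbar}$ is an isometry, $\|A\|_{L^{2}\to L^{2}}=\|T_{\hbar}AT_{\hbar}^{*}\|$ for every bounded $A$. I would then derive the well-known intertwining identity
\[
T_{\hbar}\,\mathrm{Op}_{\hbar}^{w}(a)\,T_{\hbar}^{*}=\Pi_{\hbar}\,M_{\tilde a}\,\Pi_{\hbar}+R_{\hbar},
\]
where $\Pi_{\hbar}:=T_{\hbar}T_{\hbar}^{*}$ is the orthogonal projection onto the Bargmann space, $M_{\tilde a}$ is multiplication by $\tilde a$, and $R_{\hbar}$ collects the higher-order terms of the stationary-phase expansion of the conjugation.

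The main technical task is to show $\|R_{\hbar}\|_{L^{2}\to L^{2}}=\mathcal{O}(\hbar^{1-2\mu})$. This comes from Taylor-expanding $a(y,\eta)$ around the wave-packet center inside the oscillatory integral defining the conjugation: the term of order $2k$ in the expansion involves derivatives $\partial^{2k}a$ weighted by $\hbar^{k}$. In the class $S_{\mu}^{0}$ each derivative loses $\hbar^{-\mu}$, so the $k$-th correction is controlled by $\hbar^{k(1-2\mu)}$. The first nontrivial remainder ($k=1$) already gives the announced $\mathcal{O}(\hbar^{1-2\mu})$, provided one already knows a rough Calder\'on--Vaillancourt bound (which follows from a Cotlar--Stein almost-orthogonality argument built on a partition of unity at scale $\hbar^{\mu}$, rescaling each box to an $S_{0}^{0}$ piece with effective Planck constant $\hbar^{1-2\mu}$) in order to close the estimate on the infinite tail of the expansion.

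Finally, since $\Pi_{\hbar}$ is an orthogonal projection, $\|\Pi_{\hbar}M_{\tilde a}\Pi_{\hbar}\|\leq\|M_{\tilde a}\|=\sup|\tilde a|$, and the mean value property of Gaussian convolution yields $\sup|\tilde a|\leq\sup|a|$. Combining everything gives
\[
\|\mathrm{Op}_{\hbar}^{w}(a)\|_{L^{2}\to L^{2}}=\|T_{\hbar}\,\mathrm{Op}_{\hbar}^{w}(a)\,T_{\hbar}^{*}\|\leq\sup|a|+\mathcal{O}(\hbar^{1-2\mu}).
\]
The main obstacle will be the sharp bookkeeping in the remainder: one must keep track of the precise interplay between the $\hbar^{k}$ gain from stationary phase and the $\hbar^{-2k\mu}$ loss from the symbol derivatives, and it is exactly this balance that makes the hypothesis $0\leq\mu<\frac{1}{2}$ in the definition (\ref{eq:S^m_nu}) non-negotiable.
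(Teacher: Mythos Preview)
Your approach via the semiclassical FBI transform is correct and is indeed one of the standard routes to the sharp $L^{2}$-bound with leading term $\sup|a|$; the mechanism you isolate for the $\hbar^{1-2\mu}$ remainder (stationary-phase gain $\hbar^{k}$ versus derivative loss $\hbar^{-2k\mu}$) is exactly right, and your appeal to a preliminary Calder\'on--Vaillancourt bound to control the tail is the usual way to close the argument.

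Note, however, that the paper does not actually supply its own proof of this lemma: it is simply quoted as Theorem~5.1 of Zworski's lecture notes \cite{zworski-03} and used as a black box throughout. So there is nothing to compare your argument against here beyond the cited reference. In Zworski's notes the proof proceeds essentially along the lines you describe (anti-Wick/FBI quantization plus the elementary bound $\sup|a\ast G_{\hbar}|\le\sup|a|$), so your sketch is faithful to the intended source. One small quibble: what you call the ``mean value property of Gaussian convolution'' is just the triangle inequality $|a\ast G_{\hbar}|\le|a|\ast G_{\hbar}\le\sup|a|$, valid because $G_{\hbar}$ is a probability density; there is no need to invoke anything deeper.
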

The set of $\hbar$-PDO defined through the weyl quantization is an
algebra and defines a star-algebra on the set of symbols, which coincides
at first order with the Poisson algebra of $C^{\infty}\left(\mathbb{R}^{2d}\right)$
induced by $\omega=dx\wedge d\xi$:
\begin{lem}
(Composition \cite{folland-88} p. 109). Let $a,\, b\in S_{\mu}^{m_{1}},\, S_{\mu}^{m_{2}}$.
Than $\mbox{\emph{Op}}_{\hbar}^{w}(a)\mbox{\emph{Op}}_{\hbar}^{w}(b)\in OPS_{\mu}^{m_{1}+m_{2}}$,
thus defining the star product $a\natural b$ such that $\mbox{\emph{Op}}_{\hbar}^{w}(a)\mbox{\emph{Op}}_{\hbar}^{w}(b)=\mbox{\emph{Op}}_{\hbar}^{w}(a\natural b)$.
$a\natural b=ab\mbox{ mod }\hbar^{1-2\mu}S_{\mu}^{m_{1}+m_{2}}$ and
Furthermore 
\[
\left[\mbox{\emph{Op}}_{\hbar}^{w}(a),\mbox{\emph{Op}}_{\hbar}^{w}(b)\right]=\frac{\hbar}{i}\mbox{\emph{Op}}_{\hbar}^{w}(\left\{ a,b\right\} )\mbox{ mod }\hbar^{2(1-2\mu)}S_{\mu}^{m_{1}+m_{2}-2}
\]
 
\end{lem}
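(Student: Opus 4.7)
The plan is to first show that the composition $\mbox{Op}_{\hbar}^{w}(a)\mbox{Op}_{\hbar}^{w}(b)$ is again a Weyl operator whose symbol $a\natural b$ is given by a convergent oscillatory integral in the symbol classes $S_{\mu}^{m}$, then to extract the asymptotic expansion of this symbol via stationary phase around the origin, and finally to identify the first surviving terms of this expansion both in the star product and in the commutator.

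A direct computation of the Schwartz kernel of $\mbox{Op}_{\hbar}^{w}(a)\mbox{Op}_{\hbar}^{w}(b)$ from (\ref{eq:weyl_quant}), followed by a change of variables that isolates the midpoint and the displacement, shows that this composition is again a Weyl operator whose symbol is
\[
a\natural b(x,\xi) = \frac{1}{(\pi\hbar)^{2d}}\int\!\!\int\!\!\int\!\!\int e^{-\frac{2i}{\hbar}\sigma((y,\eta),(z,\zeta))}\, a(x+y,\xi+\eta)\, b(x+z,\xi+\zeta)\, dy\, d\eta\, dz\, d\zeta,
\]
where $\sigma$ is the standard symplectic form on $\mathbb{R}^{2d}\oplus\mathbb{R}^{2d}$. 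The phase is a non-degenerate quadratic form with unique critical point at the origin, and the amplitude decays only polynomially in $(y,\eta,z,\zeta)$; one must therefore first justify absolute convergence of the integral as an element of the appropriate symbol class, using that $S_{\mu}^{m}$ is stable under the non-stationary phase integrations by parts.

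The asymptotic expansion then follows from the stationary phase lemma applied with parameter $\hbar$: one obtains the formal Moyal series whose $k$-th term is a bidifferential operator applied to $a\otimes b$ and restricted to the diagonal, with $k$ derivatives of $\partial_{\xi}$-type distributed among $a$ and $b$. In the class $S_{\mu}^{m}$ defined by (\ref{eq:S^m_nu}), this $k$-th term lies in $\hbar^{k(1-2\mu)}S_{\mu}^{m_{1}+m_{2}-k}$: the factor $\hbar^{k}$ comes from the expansion, a loss $\hbar^{-2k\mu}$ from distributing $k$ derivatives on each of $a$ and $b$, and the $\langle\xi\rangle$-weight decreases by $k$ because of the $\partial_{\xi}$ derivatives. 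The $k=0$ term is $ab$, yielding $a\natural b = ab \mbox{ mod }\hbar^{1-2\mu}S_{\mu}^{m_{1}+m_{2}}$, and the $k=1$ term equals $\frac{\hbar}{2i}\{a,b\}$.

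The commutator identity then follows from a symmetry argument: the bidifferential operator at order $k$ is symmetric in $(a,b)$ when $k$ is even and antisymmetric when $k$ is odd, so subtracting $b\natural a$ from $a\natural b$ kills all even-order terms. Thus the first surviving contribution is the $k=1$ term, equal to $\frac{\hbar}{i}\{a,b\}$, and the remainder is controlled by the $k=2$ estimate above, giving a symbol in $\hbar^{2(1-2\mu)}S_{\mu}^{m_{1}+m_{2}-2}$. The main obstacle is the remainder estimate for the stationary phase in a globally valid symbol class: one must show that the tail in the Taylor expansion of the phase produces an oscillatory integral whose $S_{\mu}^{m_{1}+m_{2}-N}$ semi-norms are uniformly controlled in $\hbar$ as $\hbar\to 0$, which requires combining the quantitative stationary phase bound near the origin with non-stationary phase integrations by parts far from it. The condition $0\leq\mu<1/2$ is essential at this point since it ensures $1-2\mu>0$, so that each expansion step provides a genuine gain.
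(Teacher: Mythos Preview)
Your outline is correct and follows the standard derivation of the Moyal expansion; the paper itself does not give a proof of this lemma but simply records it as a classical fact with a reference to Folland. Your bookkeeping of the orders is right: the $k$-th Moyal term carries $\hbar^{k}$ from the expansion and $2k$ derivatives (one on $a$ and one on $b$ per application of the bidifferential operator), hence a loss $\hbar^{-2k\mu}$ and a gain of $k$ in the $\langle\xi\rangle$-order from the $\partial_{\xi}$-derivatives, placing it in $\hbar^{k(1-2\mu)}S_{\mu}^{m_{1}+m_{2}-k}$ --- which is in fact slightly sharper than what the lemma states for $k=1$. The parity argument for the commutator and your identification of the uniform stationary-phase remainder as the only nontrivial point are both accurate.
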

The major consequence of this fact is that if $U_{t}$ is one parameter-group
of unitary operators satisfying the Shrödinger equation$-i\hbar\partial_{t}U_{t}:=\mbox{Op}_{\hbar}^{w}(H)U_{t},$
with $H$ a real bounded symbol, than the classical and {}``quantum''
dynamics are related at first order by the celebrated Egorov theorem:
\begin{lem}
(Egorov. section 9.2 in \cite{zworski-03}). For any $a\in S_{\mu}^{m}$,
$U_{-t}\mbox{\emph{Op}}_{\hbar}^{w}(a)U_{t}\in OPS_{\mu}^{m}$ and
its symbol is $a\circ e^{tX_{H}}\mbox{ mod }\hbar^{1-2\mu}S_{\mu}^{m-1}$,
with $e^{tX_{H}}$ the time-t flow generated by the Hamiltonian vector
field $X_{H}$. 
\end{lem}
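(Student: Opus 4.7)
The strategy is to compare the quantum Heisenberg evolution $A(t):=U_{-t}\mbox{Op}_{\hbar}^{w}(a)U_{t}$ with the Weyl quantization of the classically transported symbol $a_{t}:=a\circ e^{tX_{H}}$, and to show that their difference lies in $\hbar^{1-2\mu}OPS_{\mu}^{m-1}$. Since $\mbox{Op}_{\hbar}^{w}(H)$ trivially commutes with the unitary group it generates, differentiating gives the Heisenberg equation
\[
\partial_{t}A(t)=\frac{i}{\hbar}\left[\mbox{Op}_{\hbar}^{w}(H),A(t)\right],\qquad A(0)=\mbox{Op}_{\hbar}^{w}(a).
\]
On the classical side, the very definition of the Hamiltonian flow yields $\partial_{t}a_{t}=X_{H}a_{t}=\{H,a_{t}\}$, hence $\partial_{t}\mbox{Op}_{\hbar}^{w}(a_{t})=\mbox{Op}_{\hbar}^{w}(\{H,a_{t}\})$.

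Next, the Composition Lemma applied to $H\in S_{\mu}^{0}$ and $a_{t}\in S_{\mu}^{m}$ produces a remainder $r_{t}\in S_{\mu}^{m-1}$, depending smoothly on $t$, such that
\[
\frac{i}{\hbar}\left[\mbox{Op}_{\hbar}^{w}(H),\mbox{Op}_{\hbar}^{w}(a_{t})\right]=\mbox{Op}_{\hbar}^{w}(\{H,a_{t}\})+\hbar^{1-2\mu}\mbox{Op}_{\hbar}^{w}(r_{t}).
\]
Consequently the discrepancy $R(t):=A(t)-\mbox{Op}_{\hbar}^{w}(a_{t})$ satisfies $R(0)=0$ together with
\[
\partial_{t}R(t)=\frac{i}{\hbar}\left[\mbox{Op}_{\hbar}^{w}(H),R(t)\right]-\hbar^{1-2\mu}\mbox{Op}_{\hbar}^{w}(r_{t}),
\]
whose Duhamel resolution reads $R(t)=-\hbar^{1-2\mu}\int_{0}^{t}U_{s-t}\mbox{Op}_{\hbar}^{w}(r_{s})U_{t-s}\,ds$. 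This integrand is manifestly in $\hbar^{1-2\mu}OPS_{\mu}^{m-1}$ provided one knows already that conjugation by $U_{\sigma}$ preserves the class $OPS_{\mu}^{m-1}$, which is Egorov at one order lower.

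The main obstacles are thus twofold. First, one must verify that $a_{t}=a\circ e^{tX_{H}}$ genuinely remains in $S_{\mu}^{m}$ uniformly for $t$ in a bounded interval, with all seminorms controlled by those of $a$; this reduces to Gr\"onwall-type estimates on the derivatives $\partial_{x}^{\alpha}\partial_{\xi}^{\beta}(e^{tX_{H}})$ of the Hamiltonian flow, and requires good decay of $H$ and its derivatives on phase space. Second, one must resolve the apparent circularity in the Duhamel step: this is handled by induction on the order $m$, starting from smoothing operators where the statement is trivial, or equivalently by building the full asymptotic expansion $A(t)\sim\mbox{Op}_{\hbar}^{w}(a_{t}+\hbar^{1-2\mu}a_{t}^{(1)}+\hbar^{2(1-2\mu)}a_{t}^{(2)}+\cdots)$ order by order and resumming it via a Borel-type procedure. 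The hypothesis $\mu<1/2$ is precisely what makes each successive term of this expansion genuinely lower order, ensuring that the iteration closes and that the leading symbol is the classical transport $a\circ e^{tX_{H}}$ modulo $\hbar^{1-2\mu}S_{\mu}^{m-1}$ as claimed.
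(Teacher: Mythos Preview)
The paper does not actually prove this lemma: it is stated with a bare citation to \cite{zworski-03}, section~9.2, and no argument is given in the text. So there is no ``paper's own proof'' to compare against; your sketch is simply a proof of a result the author imports from the literature.

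That said, your outline is the standard Heisenberg--Duhamel proof one finds in Zworski's book and elsewhere, and the structure (differentiate, compare with the classical transport, control the remainder by Duhamel, close by an order-by-order expansion or induction) is correct. One technical point deserves tightening: you place $H$ in $S_{\mu}^{0}$ and then invoke the Composition Lemma as stated in the paper to obtain a remainder in $\hbar^{1-2\mu}S_{\mu}^{m-1}$. But applying that lemma verbatim with \emph{both} symbols in $S_{\mu}$ gives a commutator remainder in $\hbar^{2(1-2\mu)}S_{\mu}^{m-2}$, hence after dividing by $\hbar$ a term of size $\hbar^{1-4\mu}$, not $\hbar^{1-2\mu}$. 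The sharper bound in the lemma's statement relies on the fact that $H$ is a \emph{fixed} bounded symbol (i.e.\ $H\in S_{0}^{0}$, independent of $\hbar$), so that derivatives falling on $H$ in the Moyal expansion cost no powers of $\hbar^{-\mu}$; only the derivatives landing on $a_{t}$ do. With $H\in S_{0}^{0}$ the $k$-th Moyal term is $\mathcal{O}(\hbar^{k(1-\mu)})$ rather than $\mathcal{O}(\hbar^{k(1-2\mu)})$, and after dividing by $\hbar$ the first remainder is indeed $\mathcal{O}(\hbar^{1-2\mu})$ as claimed. This is also what makes the flow estimates (your ``Gr\"onwall-type'' remark) routine: since $H$ is $\hbar$-independent with all derivatives bounded, $e^{tX_{H}}$ is a fixed diffeomorphism with bounded derivatives on bounded time intervals, and $a\circ e^{tX_{H}}\in S_{\mu}^{m}$ follows by the chain rule.
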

The operators $U_{t}$ are a special kind of so called $\hbar-$Fourier-Integral-Operator
(FIO) \cite{martinez-01}. FIOs are always associated to a symplectic
map on the classical phase space. Another important example of FIOs
is given by pull-back operators $U_{\kappa}:\varphi\mapsto\varphi\circ\kappa$
with $\kappa$ a smooth diffeomorphism of $\mathbb{R}^{d}$. The above
Egorov theorem holds replacing the Hamiltonian flow by the canonical
lift $\widetilde{\kappa}$ of $\kappa$ on $\mathbb{R}^{2d}$ (seen
as the cotangent bundle of $\mathbb{R}^{d}$), 
\[
\widetilde{\kappa}:(x,\xi)\mapsto(\kappa^{-1}(x),^{t}D_{\kappa^{-1}(x)}\kappa\cdot\xi).
\]

\section{Canonical maps}

In this section we derive the canonical maps that play a central role
in our approach.

\subsection{The Abelian case}

When $\mathbb{G}\equiv\mbox{U}(1)$, the operator $\hat{F}_{\nu}$
reads $e^{i\nu\Omega}\mathcal{F}_{E}$ with $\mathcal{F}_{E}:\varphi\mapsto\varphi\circ E$
and $\Omega\in C^{\infty}(S^{1})$. $\hat{F}_{\nu}$ can be seen as
an $\hbar-$Fourier-Integral-Operator (FIO), with semiclassical parameter
$\nu^{-1}$. As explained in the previous section, the pull back operator
$\mathcal{F}_{E}$ is one of the simplest examples of an FIO and is
associated to the $k-$valued canonical lift $\widetilde{E}$ of $E$
on the cotangent space $T^{*}S^{1}$ (recall that $E_{\epsilon}^{-1}$
are the inverse branches of $E$):
\[
\widetilde{E}_{\epsilon}:(x,\xi)\mapsto(E_{\epsilon}^{-1}(x),E'\left(E_{\epsilon}^{-1}(x)\right)\xi)\;\epsilon=0,...,k-1.
\]
On the other hand the multiplication operator $e^{i\nu\Omega}$ ,$\nu\rightarrow\infty$
is also a very simple FIO and is associated to the time 1 flow generated
by the Hamiltonian $\Omega(x)$, $(x,\xi)\mapsto(x,\xi+\Omega'(x))$.
The canonical map associated to $\hat{F}_{\nu}$ is thus $k-$valued
and reads
\begin{equation}
F_{\epsilon}:(x,\xi)\mapsto(E_{\epsilon}^{-1}(x),E'\left(E_{\epsilon}^{-1}(x)\right)\xi+\Omega'\left(E_{\epsilon}^{-1}(x)\right))\;\epsilon=0,...,k-1.\label{eq:F_epsilon}
\end{equation}
 The intuitive idea behind these maps is that wave packets localized
both in direct and Fourier space near some point $(x,\nu\xi)=:(x,\xi_{\nu})$
will be transformed, up to negligible terms as $\nu\rightarrow\infty$,
in other wave packets localized near $F_{\epsilon}(x,\xi_{\nu}),$
$\epsilon=0,...,k-1.$ To the reader not familiar with semiclassical
analysis we recommend the reading of section 3.2 in \cite{fred_gap_09}
where this simple idea is explained in detail.

Using the fact that $E_{\epsilon}^{-1}\circ E=Id_{S^{1}}$ and (\ref{eq:adjoint})
the Egorov theorem of section \ref{sec:PDOtheory} can be quoted with
$\hat{F}_{\nu}$ in the role of the FIO as
\begin{lem}
\label{lem:Egorov-a}(Egorov in the Abelian setting). Let $\hbar=\nu^{-1};\nu>0$.
For any $a\in S_{\mu}^{m}\left(T^{*}S^{1}\right)$ any $\hbar>0,$
$\hat{F}_{\nu}^{*}Op_{\hbar}^{w}(a)\hat{F}_{\nu}\in OPS_{\mu}^{m}$
and its symbol reads
\begin{equation}
\sum_{\epsilon=0,...,k-1}\frac{a\circ F_{\epsilon}}{E'\circ E_{\epsilon}^{-1}}\mbox{ mod }\hbar^{1-2\mu}S_{\mu}^{m-1}.\label{eq:egorov_a}
\end{equation}
 
\end{lem}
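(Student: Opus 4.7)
The plan is to use the factorization $\hat{F}_\nu = e^{i\nu\Omega}\mathcal{F}_E$ (so that $\hat{F}_\nu^* \mathrm{Op}_\hbar^w(a) \hat{F}_\nu = \mathcal{F}_E^*\bigl(e^{-i\Omega/\hbar}\mathrm{Op}_\hbar^w(a)e^{i\Omega/\hbar}\bigr)\mathcal{F}_E$ with $\hbar=\nu^{-1}$) and treat the two conjugations separately. The first is a direct application of Egorov for the symplectic shift $(x,\xi)\mapsto(x,\xi+\Omega'(x))$; the second is the analogue of Egorov for pull-back, complicated by the fact that $E$ is $k$-to-$1$, which is handled by a stationary-phase decomposition into inverse branches.

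For the phase step, the Schwartz kernel of $e^{-i\Omega/\hbar}\mathrm{Op}_\hbar^w(a)e^{i\Omega/\hbar}$ equals $\frac{1}{2\pi\hbar}\int e^{i(\xi(x-y)-(\Omega(x)-\Omega(y)))/\hbar} a(\tfrac{x+y}{2},\xi)d\xi$. Write $\Omega(x)-\Omega(y) = (x-y)\theta(x,y)$ with $\theta(x,y)=\int_0^1 \Omega'(y+t(x-y))dt = \Omega'(\tfrac{x+y}{2}) + O((x-y)^2)$, and change variable $\xi\mapsto \xi+\theta(x,y)$ to absorb the extra phase. Converting the remaining $O((x-y)^2)$ in the symbol into $\hbar\partial_\xi$-derivatives via integration by parts yields $e^{-i\Omega/\hbar}\mathrm{Op}_\hbar^w(a)e^{i\Omega/\hbar}=\mathrm{Op}_\hbar^w(a_1)$ with $a_1(x,\xi)=a(x,\xi+\Omega'(x))$ modulo $\hbar^{1-2\mu}S_\mu^{m-1}$.

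For the transfer step, using the formula (\ref{eq:adjoint}) and a change of variable $z\mapsto E_{\epsilon'}^{-1}(w)$ on each of the $k$ inverse branches of $E$, the Schwartz kernel of $\mathcal{F}_E^*\mathrm{Op}_\hbar^w(a_1)\mathcal{F}_E$ reads
\[
K(x,w) = \sum_{\epsilon,\epsilon'} \frac{1}{E'(E_\epsilon^{-1}(x))E'(E_{\epsilon'}^{-1}(w))}\cdot\frac{1}{2\pi\hbar}\int e^{i\eta(E_\epsilon^{-1}(x)-E_{\epsilon'}^{-1}(w))/\hbar}\, a_1\!\bigl(\tfrac{E_\epsilon^{-1}(x)+E_{\epsilon'}^{-1}(w)}{2},\eta\bigr)d\eta.
\]
For $\epsilon\neq\epsilon'$ the ranges of $E_\epsilon^{-1}$ and $E_{\epsilon'}^{-1}$ are disjoint, so $|E_\epsilon^{-1}(x)-E_{\epsilon'}^{-1}(w)|$ is bounded below and repeated integrations by parts in $\eta$ make these cross-branch contributions $O(\hbar^\infty)$. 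For each diagonal term $\epsilon=\epsilon'$, the rescaling $\eta = E'(E_\epsilon^{-1}(x))\xi$ combined with the Taylor expansion $E_\epsilon^{-1}(x)-E_\epsilon^{-1}(w) = (x-w)/E'(E_\epsilon^{-1}(x)) + O((x-w)^2)$ and a conversion of the remaining left-quantization into Weyl quantization yields $\mathrm{Op}_\hbar^w\bigl(\tfrac{1}{E'(E_\epsilon^{-1}(x))}a_1(E_\epsilon^{-1}(x),E'(E_\epsilon^{-1}(x))\xi)\bigr)$ modulo $\hbar^{1-2\mu}S_\mu^{m-1}$.

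Summing over $\epsilon$ and substituting $a_1(E_\epsilon^{-1}(x),E'(E_\epsilon^{-1}(x))\xi) = a(E_\epsilon^{-1}(x),E'(E_\epsilon^{-1}(x))\xi+\Omega'(E_\epsilon^{-1}(x))) = a\circ F_\epsilon(x,\xi)$ by the definition (\ref{eq:F_epsilon}) of $F_\epsilon$ produces the claimed formula. As a sanity check, $a\equiv 1$ recovers $\sum_\epsilon 1/(E'\circ E_\epsilon^{-1})$, which matches the direct computation of $\hat F_\nu^*\hat F_\nu$. The main technical obstacle is the symbol-class bookkeeping for the remainder: one needs $\mu<1/2$ in both steps so that trading a factor $(x-y)$ for $\hbar\partial_\xi$ yields a genuine gain of $\hbar^{1-2\mu}$, and one must also verify that the $x$-dependent fiber dilation $\xi\mapsto E'(E_\epsilon^{-1}(x))\xi$ preserves the class $S_\mu^m$, which follows from $E'\in C^\infty$ being bounded away from $0$ and $\infty$.
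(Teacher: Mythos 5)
Your argument is correct and carries out what the paper merely cites: factor $\hat F_\nu = e^{i\nu\Omega}\mathcal F_E$, apply Egorov for the phase conjugation, then the adjoint formula (\ref{eq:adjoint}) and a branch-by-branch kernel analysis for the transfer conjugation; the paper simply invokes the two Egorov statements of section \ref{sec:PDOtheory} and the identity $E_\epsilon^{-1}\circ E = \mathrm{Id}$, and you fill in the underlying computation. One imprecision to fix in the write-up: the inverse branches $E_\epsilon^{-1}$ cannot be chosen as globally smooth maps $S^1\to S^1$ (their images are adjacent arcs, not disjoint, and a global choice is obstructed by degree), so the correct intrinsic object is the preimage set $E^{-1}(x)$ as in (\ref{eq:adjoint}). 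Your sentence "the ranges of $E_\epsilon^{-1}$ and $E_{\epsilon'}^{-1}$ are disjoint" should be replaced by the local statement that for $|x-w|$ small the $k$ preimages of $x$ and the $k$ preimages of $w$ pair off into $k$ nearby pairs, while any unmatched pair $(y,z)$ with $E(y)=x$, $E(z)=w$ is separated by a fixed gap bounded below by the minimum spacing of $E^{-1}(\{\mathrm{pt}\})$, which is at least $1/\max E'>0$; that is exactly what the non-stationary-phase estimate for the cross-branch terms needs, and the rest of your derivation goes through unchanged.
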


\subsection{The simplest non-Abelian case}

When $\mathbb{G}\equiv\mbox{SU}(2)$, the operator $\hat{F}_{j}$
reads $\hat{\tau}_{j}\left(\mathcal{F}_{E}\otimes\mathbb{I}_{\mathcal{D}_{j}}\right)$.
We thus need to understand how the unitary operator $\hat{\tau}_{j}$
defined by $\left(\hat{\tau}_{j}\boldsymbol{\varphi}\right)(x):=\hat{\tau}_{j}(x)\boldsymbol{\varphi}(x)$
could be an FIO and on which space its canonical map would act.In
other words we have to define wave packets on $C^{\infty}(S^{1})\otimes\mathcal{D}_{j}$
and exhibit the action of $\hat{\tau}_{j}$ upon them. For this purpose
we shall use coherent states theory for compact Lie groups \cite{perelomov1}.

The Lie algebra of $\mbox{SU}(2)$ is a 3 dimensional real vector
space spanned by three generators $iJ_{l};l=1,2,3.$ The representations
spaces $\mathcal{D}_{j}$ are eigenspaces of the Casimir operator
$\mbox{J}^{2}:=\sum_{l=1}^{3}J_{l}^{2}$ with eigenvalue $j(j+1)$,
and an o.n. basis in $\mathcal{D}_{j}$ is given by the eigenvectors
of $J_{3}$. In particular if one defines $J_{\pm}=J_{1}\pm iJ_{2}$
there exists a unique normalized vector $\left|0\right\rangle \in\mathcal{D}_{j}$
such that $J_{3}\left|0\right\rangle =-j\left|0\right\rangle $ and
$J_{-}\left|0\right\rangle =0$, called the maximal weight vector
and $\mathcal{D}_{j}=\mbox{span}\left\{ J_{+}^{k}\left|0\right\rangle ,k=0,...,2j\right\} $.
Let $\pi:\mathcal{D}_{j}\rightarrow\mathbb{P}\left(\mathcal{D}_{j}\right)$
be the canonical mapping on the projective space. Define the following
subset of $\mathbb{P}\left(\mathcal{D}_{j}\right)$ 
\begin{equation}
X_{j}:=\left\{ \pi\left(\hat{g}_{j}\left|0\right\rangle \right);\, g\in\mathbb{G}\right\} ,\label{eq:X_j}
\end{equation}
 i.e. the set of complex lines trough $0$ in $\mathcal{D}_{j}$ spanned
by the orbit of $\left|0\right\rangle $. The isotropy subgroup of
$\pi\left(\left|0\right\rangle \right)$ is the set of elements $\left\{ e^{i\theta J_{3}}\right\} $,
a $\mbox{U}(1)$ subgroup of $\mathbb{G}$. Thus $X_{j}$ is isomorphic
to the coset space $\mbox{SU}(2)/\mbox{U}(1)\cong S^{2}$. The isomorphism
$\phi:X_{j}\rightarrow S^{2}$ can be fixed once and for all by setting
$\phi(\pi\left(\hat{g}_{j}\left|0\right\rangle \right))=R_{g}\mathbf{n}_{0}$
with $\mathbf{n}_{0}$ the south pole of the sphere and $R_{g}\in\mbox{SO}(3)$
the rotation corresponding to the adjoint representation of $g$ in
$\mathbb{R}^{3}$. A point $\mathbf{n}$ on the sphere is thus associated
to an orthogonal projector $\left|\mathbf{n}\right\rangle \left\langle \mathbf{n}\right|$
on $\mathcal{D}_{j}$ with $\left|\mathbf{n}\right\rangle $ being
any $\hat{g}_{j}\left|0\right\rangle $ s.t. $[g]\equiv\mathbf{n}$.
This mapping is called the \emph{quantization of the sphere} and the
vectors $\left|\mathbf{n}\right\rangle $ the \emph{coherent states}.
Furthermore, since $\int_{S^{2}}\left|\mathbf{n}\right\rangle \left\langle \mathbf{n}\right|d\mathbf{n}$
commutes with all elements of $\mathbb{G}$, by Shur's lemma it is
a multiple of the identity $\mathbb{I}_{\mathcal{D}_{j}}$. An algebraic
calculation gives (\cite{perelomov1} p. 63)
\begin{equation}
\frac{\dim\mathcal{D}_{j}}{4\pi}\int_{S^{2}}\left|\mathbf{n}\right\rangle \left\langle \mathbf{n}\right|d\mathbf{n}=\mathbb{I}_{\mathcal{D}_{j}}.\label{eq:completude}
\end{equation}
Also one can show that the coherent sates are localized, as $j\rightarrow\infty$,
on the sphere: $\left|\left\langle \mathbf{n'}\right|\left.\mathbf{n}\right\rangle \right|^{2}=\left|\frac{1+\mathbf{n}'\cdot\mathbf{n}}{2}\right|^{2j}$.
A natural way to quantize smooth observables is to put 
\begin{equation}
\mbox{Op}_{j}^{AW}:\begin{array}{ccc}
C^{\infty}(S^{2}) & \rightarrow & \mbox{End}(\mathcal{D}_{j})\\
a(\mathbf{n}) & \mapsto & \int_{S^{2}}a(\mathbf{n})\left|\mathbf{n}\right\rangle \left\langle \mathbf{n}\right|d\mathbf{n}_{j}
\end{array}\; d\mathbf{n}_{j}:=\frac{\dim\mathcal{D}_{j}}{4\pi}.\label{eq:anti-wick}
\end{equation}
 This mapping is surjective. In fact, because we have chosen a maximal
weigh vector as a starting point, $X_{j}$ has a Kaehlerian structure
inherited from the projective space (\cite{sternberg} p. 168). The
symplectic form on $X_{j}$ reads $\omega_{j}=j\omega_{S^{2}}$ with
$\omega_{S^{2}}$ the canonical symplectic form on $S^{2}$ and (\ref{eq:anti-wick})
corresponds to the geometric quantization of $\left(S^{2},\omega_{S^{2}}\right)$
with $\mathcal{D}_{j}$ as the quantum Hilbert space \cite{woodhouse2};
the following is a special case of a more general result:
\begin{thm}
\label{thm:(cahen-laurent-charles)}(Cahen-Gutt-Rawnsley \cite{Cahen_Gutt_Rawnsley91}).
For any $a,\, b\in C^{\infty}\left(S^{1}\right)$ 
\[
\mbox{\emph{Op}}_{j}^{AW}(a)\mbox{\emph{Op}}_{j}^{AW}(b)=\mbox{\emph{Op}}_{j}^{AW}(a\natural b),
\]
 with $a\natural b=ab+O(j^{-1})$. Furthermore 
\[
-\frac{i}{j}\left[\mbox{\emph{Op}}_{j}^{AW}(a),\mbox{\emph{Op}}_{j}^{AW}(b)\right]=\mbox{\emph{Op}}_{j}^{AW}(\left\{ a,b\right\} )+\mathcal{O}_{End(\mathcal{D}_{j})}(j^{-1}).
\]
\end{thm}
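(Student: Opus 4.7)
My plan is to reduce the statement to a semiclassical Laplace-method expansion driven by the coherent-state overlap $|\langle\mathbf{n}|\mathbf{n}'\rangle|^{2}=\left(\frac{1+\mathbf{n}\cdot\mathbf{n}'}{2}\right)^{2j}$, a smooth bump of width of order $j^{-1/2}$ about the diagonal $\mathbf{n}'=\mathbf{n}$. Together with the resolution of identity (\ref{eq:completude}), it plays the role of a semiclassical reproducing kernel and the standard Berezin-Toeplitz machinery on the Kähler manifold $(S^{2},\omega_{S^{2}})$ applies.

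First, existence of $a\natural b$ follows from the stated surjectivity of $\mbox{Op}_{j}^{AW}$. To pin down a canonical asymptotic representative I would pass through the covariant (Berezin) symbol $\sigma_{A}(\mathbf{n}):=\langle\mathbf{n}|A|\mathbf{n}\rangle$ of $A:=\mbox{Op}_{j}^{AW}(a)\mbox{Op}_{j}^{AW}(b)$. From (\ref{eq:anti-wick}),
\[
\sigma_{A}(\mathbf{n})=\iint a(\mathbf{n}')b(\mathbf{m}')\,\langle\mathbf{n}|\mathbf{n}'\rangle\langle\mathbf{n}'|\mathbf{m}'\rangle\langle\mathbf{m}'|\mathbf{n}\rangle\, d\mathbf{n}_{j}'\,d\mathbf{m}_{j}',
\]
whose integrand concentrates at $\mathbf{n}'=\mathbf{m}'=\mathbf{n}$. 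Writing $\sigma_{\mbox{Op}_{j}^{AW}(f)}=B_{j}f$ for the Berezin transform and expanding $B_{j}=\mbox{Id}+j^{-1}L+O(j^{-2})$ on smooth symbols (itself an output of the same Laplace analysis), the relation $\sigma_{A}=B_{j}(a\natural b)$ determines $a\natural b$ order by order in $j^{-1}$ once $\sigma_{A}$ has been expanded.

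Second, for the Laplace expansion I would use $\mbox{SU}(2)$-equivariance: conjugation by $\hat{g}_{j}$ reduces the computation at any $\mathbf{n}$ to the single base point $\mathbf{n}_{0}$, where stereographic coordinates $\zeta\in\mathbb{C}$ yield the explicit Kähler overlap $|\langle\zeta|\zeta'\rangle|^{2}=(1+|\zeta|^{2})^{-2j}(1+|\zeta'|^{2})^{-2j}|1+\bar{\zeta}\zeta'|^{4j}$. Gaussian integration in the rescaled variables $\sqrt{j}\,\zeta$ then gives $\sigma_{A}(\mathbf{n})=a(\mathbf{n})b(\mathbf{n})+j^{-1}D(a,b)(\mathbf{n})+O(j^{-2})$, with $D$ a bidifferential operator built from the Kähler metric. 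Undoing $B_{j}$ yields the first assertion $a\natural b=ab+O(j^{-1})$. For the commutator only the antisymmetric part $D(a,b)-D(b,a)$ survives, and the explicit Gaussian at $\mathbf{n}_{0}$ identifies it with $i\,\{a,b\}_{\omega_{S^{2}}}$, producing the stated Egorov-type identity.

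The main obstacle is the intrinsic identification of the $j^{-1}$ correction with the Poisson bracket on $(S^{2},\omega_{S^{2}})$, together with uniform control of the remainders in the operator norm on $\mathcal{D}_{j}$. Both are manageable: equivariance reduces the entire analysis to Gaussian moment computations at the single base point $\mathbf{n}_{0}$, while the elementary bound $\|\mbox{Op}_{j}^{AW}(r)\|_{\mbox{End}(\mathcal{D}_{j})}\leq\|r\|_{L^{\infty}(S^{2})}$ (immediate from (\ref{eq:anti-wick}) and (\ref{eq:completude})) converts sup-norm estimates on residual symbols directly into the operator-norm estimates required in the statement.
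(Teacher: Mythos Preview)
The paper does not prove this theorem: it is quoted from \cite{Cahen_Gutt_Rawnsley91} and used as a black box, so there is no argument in the text to compare your proposal against. Your outline is the standard Berezin--Toeplitz route (covariant symbol, Laplace expansion of the reproducing kernel in stereographic coordinates at the base point, equivariance to globalize, and the contravariant bound $\|\mbox{Op}_{j}^{AW}(r)\|\le\|r\|_{\infty}$ for remainders), which is essentially what the cited reference does in greater generality for compact K\"ahler manifolds. As a sketch it is sound; the only place I would tighten it is your first step: surjectivity of $\mbox{Op}_{j}^{AW}$ at each fixed $j$ gives \emph{some} preimage of the product, but not one admitting an asymptotic expansion in $j^{-1}$ with smooth coefficients --- that is precisely what the Berezin-transform argument you then give is needed for, so the appeal to surjectivity is superfluous. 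Note also the typo in the statement: the symbols live in $C^{\infty}(S^{2})$, not $C^{\infty}(S^{1})$, as your proof correctly assumes.
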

\begin{rem}
\label{note1}The representations of group elements are natural FIOs
in this setting as one can check readily that, for any observable
$a$, and $g\in\mathbb{G}$, 
\[
\hat{g}_{j}^{-1}\mbox{Op}_{j}^{AW}(a)\hat{g}_{j}=\mbox{Op}_{j}^{AW}(a\circ R_{g}).
\]
 Since $\hat{g}_{j}=e^{iju\cdot\frac{J}{j}}$ for some $u\in\mathbb{R}^{3}$
this means that $u\cdot\frac{J}{j}=\mbox{Op}_{j}^{AW}(u\cdot\mathbf{n}+\mathcal{O}(j^{-1}))$
so that the Hamiltonian time-1 flow on $S^{2}$ coincides with the
rotation $R_{g}\in\mbox{SO}(3)$. 
\end{rem}
We can now define a quantization of the full symplectic phase space
\begin{equation}
\left(T^{*}S^{1}\times S^{2};dx\wedge d\xi+\omega_{S^{2}}\right)\label{eq:full_phase_space}
\end{equation}
To any classical observable $a\in C^{\infty}\left(T^{*}S^{1}\times S^{2}\right)$
we associate an operator $\mbox{Op}_{j}\left(a\right):\mathcal{S}(S^{1})\otimes\mathcal{D}_{j}\rightarrow\mathcal{S}'(S^{1})\otimes\mathcal{D}_{j}$
defined by 
\begin{equation}
\mbox{Op}_{j}\left(a\right):=\mbox{Op}_{j}^{AW}\circ\mbox{Op}_{j^{-1}}^{w}\left(a\right)\label{eq:full_quantization}
\end{equation}
 with $\mbox{Op}_{\hbar}^{w};\,\hbar>0$ the usual Weyl quantization
(\ref{eq:weyl_quant}) defined on the circle. By composition the quantization
(\ref{eq:full_quantization}) obeys the correspondence principle:
The composition of two such PDO is a PDO whose principal term is the
product of the symbols and the principal term of their commutator
is $(-i/j)$ times the Poisson bracket of the symbols. Since $\mbox{SU}(2)$
is simply connected we can write $\tau(x)$ as $e^{i\Omega(x)\cdot J}$
with $\Omega\in C^{\infty}(S^{1};\mathbb{R}^{3})$ a smooth vector
valued function. Using remark \ref{note1}, we have that 
\[
\hat{\tau}_{j}=\exp\left(ij\mbox{Op}_{j}\left(a\right)\right);\,\mbox{with }a(x,\mathbf{n})=\Omega(x)\cdot\mathbf{n}+\mathcal{O}(j^{-1}).
\]
This is precisely the formal expression of an FIO associated to the
time-$1$ flow generated by the Hamiltonian vector field associated
to $\Omega(x)\cdot\mathbf{n}$:
\[
\dot{x}=0;\,\dot{\xi}=-\Omega'(x)\cdot\mathbf{n};\,\dot{\mathbf{n}}=\Omega(x)\wedge\mathbf{n}
\]
 After integration the time-1 flow reads: 
\[
(x,\xi,\mathbf{n})\mapsto(x,\,\xi+\mathbf{n}\cdot H(x,\mathbf{n}),\, R_{\tau(x)}\mathbf{n}),
\]
 with $H(x,\mathbf{n}):=\mathbf{n}\cdot\int_{0}^{1}\tilde{R}_{t\Omega(x)}\Omega'(x)dt$,
and $\tilde{R}_{u}$ the rotation in $\mathbb{R}^{3}$ of axis $u$.
Coming back to the operator $\hat{F}_{j}=\hat{\tau}_{j}\left(\mathcal{F}_{E}\otimes\mathbb{I}_{\mathcal{D}_{j}}\right)$
we see now that it is indeed an FIO and by composition its k-valued
canonical map reads, with $\epsilon=0,...,k-1:$ 
\begin{equation}
F_{\epsilon}:\left(x,\xi,\mathbf{n}\right)\mapsto\left(E_{\epsilon}^{-1}(x),E'\left(E_{\epsilon}^{-1}(x)\right)\xi+H(E_{\epsilon}^{-1}(x);\mathbf{n}),R_{\tau\left(E_{\epsilon}^{-1}(x)\right)}\mathbf{n}\right).\label{eq:F_epsilonNA}
\end{equation}

Using the Campbell-Hausdorff formula (\cite{taylor_tome1} p. 541)
one can show that%
\footnote{This result is not surprising if one considers the transport by $\hat{\tau}_{j}$
of generalized wave packets $\boldsymbol{\varphi}_{x,\xi,\mathbf{n}}:=\varphi_{x,\xi}\otimes\left|\mathbf{n}\right\rangle $
with $\varphi_{x,\xi}$ a Gaussian wave-packet of width $j^{-1/2}$
as defined in \cite{fred_gap_09} section 3.2. By the localization
property of both the coherent states and the Gaussian wave packets
\[
\left(\boldsymbol{\varphi}_{y,\eta,\mathbf{n}'}|\hat{\tau}_{j}\boldsymbol{\varphi}_{x,\xi,\mathbf{n}}\right)_{L^{2}\left(S^{1}\right)\otimes\mathcal{D}_{j}}:=\int_{S^{1}}\overline{\varphi_{y,\eta}(z)}\varphi_{x,\xi}(z)\left\langle \mathbf{n}'|\hat{\tau}_{j}(z)\mathbf{n}\right\rangle dz
\]
 will be negligible as $j$ grows if $y\neq x$ and if $y=x$ negligible
if $\mathbf{n}'\neq R_{\tau(x)}\mathbf{n}$. On the other hand, for
some $c>0$
\[
\left|\left(\boldsymbol{\varphi}_{x,\eta,R_{\tau(x)}\mathbf{n}}|\hat{\tau}_{j}\boldsymbol{\varphi}_{x,\xi,\mathbf{n}}\right)\right|:=\left|\int_{S^{1}}e^{-j(x-z)^{2}}e^{ij(\xi-\eta)}\left\langle \mathbf{n}|\hat{\tau}_{j}(x)^{-1}\hat{\tau}_{j}(z)\mathbf{n}\right\rangle dz\right|+\mathcal{O}(e^{-cj}).
\]
 $\left\langle \mathbf{n}|\hat{\tau}_{j}(x)^{-1}\hat{\tau}_{j}(z)\mathbf{n}\right\rangle $
is maximal and equal to one for $z=x$. If we write this term as $\rho(z)e^{-ij\psi(z)}$
then the stationary phase theorem states that the above expression
will be negligible whenever $\xi-\eta-\frac{i}{j}\left\langle \mathbf{n}|\hat{\tau}_{j}(x)^{-1}\frac{d}{dx}\hat{\tau}_{j}(x)\mathbf{n}\right\rangle \neq0$. %
} 
\[
H(x,\mathbf{n})=-\frac{i}{j}\left\langle \mathbf{n}\right|\hat{\tau}_{j}^{-1}\frac{d\hat{\tau}_{j}}{dx}(x)\left|\mathbf{n}\right\rangle ,
\]
 the Wick symbol \cite{Cahen_Gutt_Rawnsley91} of $-ij^{-1}\left(\hat{\tau}_{j}^{-1}\frac{d}{dx}\hat{\tau}_{j}\right)(x)\in\mathfrak{su}(2)$.

Set $\hbar\equiv j^{-1},\, j>0$ to define the class, as (\ref{eq:S^m_nu}),
given $m\in\mathbb{R}$ and $0\leq\mu<\frac{1}{2}$: 
\begin{equation}
S_{\mu}^{m}\left(T^{*}S^{1}\times S^{2}\right):=\left\{ a_{\hbar}\in C^{\infty}\,;\,|\partial_{x}^{\alpha}\partial_{\xi}^{\beta}\partial_{\mathbf{n}}^{\gamma}a_{\hbar}|\leq C_{\alpha\beta}\hbar^{-\mu(\alpha+\beta+|\gamma|)}\left\langle \xi\right\rangle ^{m-|\beta|}\right\} .\label{eq:S^m_mu(T*S_times_S^2)}
\end{equation}
In this setting the Egorov theorem can then be stated as 
\begin{lem}
\label{lem:Egorov-na}(Egorov in the non-Abelian setting). Let $a\in S_{\mu}^{m}\left(T^{*}S^{1}\times S^{2}\right)$,
then $\hat{F}_{j}^{*}Op_{j}\left(a\right)\hat{F}_{j}\in OPS_{\mu}^{m}$
and its principal symbol reads
\begin{equation}
\sum_{\epsilon=0,...,k-1}\frac{a\circ F_{\epsilon}}{E'\circ E_{\epsilon}^{-1}}\mbox{ mod }\hbar^{1-2\mu}S_{\mu}^{m-1}.\label{eq:egorov_na}
\end{equation}
 
\end{lem}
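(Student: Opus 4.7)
The plan is to split the conjugation into two Egorov-type steps matching the factorization $\hat{F}_{j}=\hat{\tau}_{j}(\mathcal{F}_{E}\otimes\mathbb{I}_{\mathcal{D}_{j}})$, so that
\[
\hat{F}_{j}^{*}\mbox{Op}_{j}(a)\hat{F}_{j}=\bigl(\mathcal{F}_{E}\otimes\mathbb{I}\bigr)^{*}\bigl[\hat{\tau}_{j}^{*}\mbox{Op}_{j}(a)\hat{\tau}_{j}\bigr]\bigl(\mathcal{F}_{E}\otimes\mathbb{I}\bigr).
\]
The inner conjugation by $\hat{\tau}_{j}$ will produce the pullback by the time-$1$ Hamiltonian flow $G:(x,\xi,\mathbf{n})\mapsto(x,\xi+H(x,\mathbf{n}),R_{\tau(x)}\mathbf{n})$ computed before (\ref{eq:F_epsilonNA}), while the outer one will produce the $k$-valued pullback by $\tilde{E}_{\epsilon}\times\mbox{id}_{S^{2}}$ weighted by $1/(E'\circ E_{\epsilon}^{-1})$. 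Since $F_{\epsilon}=G\circ(\tilde{E}_{\epsilon}\times\mbox{id}_{S^{2}})$ by inspection of (\ref{eq:F_epsilonNA}), composing the two steps will yield (\ref{eq:egorov_na}).

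For the inner step I would use the representation $\hat{\tau}_{j}=\exp(ij\,\mbox{Op}_{j}(a_{\tau}))$ with $a_{\tau}(x,\mathbf{n})=\Omega(x)\cdot\mathbf{n}+\mathcal{O}(j^{-1})$ and set
\[
\Phi(t):=e^{-ijt\,\mbox{Op}_{j}(a_{\tau})}\,\mbox{Op}_{j}(a)\,e^{ijt\,\mbox{Op}_{j}(a_{\tau})}.
\]
The Heisenberg equation $\dot{\Phi}(t)=-ij[\mbox{Op}_{j}(a_{\tau}),\Phi(t)]$ combined with the correspondence principle stated after (\ref{eq:full_quantization}) (an $(x,\xi,\mathbf{n})$-extension of Theorem \ref{thm:(cahen-laurent-charles)}) will give $\dot{\Phi}(t)=\mbox{Op}_{j}(\{a_{\tau},\sigma(t)\})$ modulo $\hbar^{1-2\mu}S_{\mu}^{m-1}$, where $\sigma(t)$ denotes the principal symbol of $\Phi(t)$. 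Solving the transport equation $\dot{\sigma}=X_{a_{\tau}}\sigma$ with $\sigma(0)=a$ then gives $\sigma(1)=a\circ G$, and a Duhamel argument over $t\in[0,1]$ collects the accumulated errors in the correct class $\hbar^{1-2\mu}S_{\mu}^{m-1}$. The stability of $S_{\mu}^{m}$ under pullback by $\phi_{t}^{X_{a_{\tau}}}$, needed for these estimates, is straightforward because $G$ fixes $x$ and shifts $\xi$ only by the bounded symbol $H(x,\mathbf{n})$.

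For the outer step, $\mathcal{F}_{E}\otimes\mathbb{I}$ acts trivially on $\mathcal{D}_{j}$, so (\ref{eq:full_quantization}) and (\ref{eq:anti-wick}) give, for any $b\in S_{\mu}^{m}$,
\[
\bigl(\mathcal{F}_{E}\otimes\mathbb{I}\bigr)^{*}\mbox{Op}_{j}(b)\bigl(\mathcal{F}_{E}\otimes\mathbb{I}\bigr)=\int_{S^{2}}\bigl(\mathcal{F}_{E}^{*}\mbox{Op}_{j^{-1}}^{w}(b(\cdot,\cdot,\mathbf{n}))\mathcal{F}_{E}\bigr)\otimes|\mathbf{n}\rangle\langle\mathbf{n}|\,d\mathbf{n}_{j}.
\]
Applying Lemma \ref{lem:Egorov-a} (specialized to $\Omega\equiv0$, so only $\tilde{E}_{\epsilon}$ appears) pointwise in $\mathbf{n}$ and reassembling yields $\mbox{Op}_{j}\bigl(\sum_{\epsilon}(b\circ(\tilde{E}_{\epsilon}\times\mbox{id}_{S^{2}}))/(E'\circ E_{\epsilon}^{-1})\bigr)$ modulo $\hbar^{1-2\mu}S_{\mu}^{m-1}$. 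Substituting the output $b=a\circ G$ of the inner step and identifying $F_{\epsilon}=G\circ(\tilde{E}_{\epsilon}\times\mbox{id}_{S^{2}})$ will conclude.

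The hard part will be the inner step: uniform control over $t\in[0,1]$ of the Heisenberg-evolution remainders in the mixed quantization $\mbox{Op}_{j}=\mbox{Op}_{j}^{AW}\circ\mbox{Op}_{j^{-1}}^{w}$. This requires a symbol calculus for (\ref{eq:full_quantization}) extending Theorem \ref{thm:(cahen-laurent-charles)} to compositions of symbols depending on all of $(x,\xi,\mathbf{n})$, with quantitative control of the subprincipal terms in the $S_{\mu}^{m}(T^{*}S^{1}\times S^{2})$ classes; the outer step and the composition of flows are then routine given the Abelian Egorov lemma already in hand.
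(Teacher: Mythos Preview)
The paper does not supply an explicit proof of this lemma; it simply states it as the Egorov theorem ``in this setting'' after having identified $\hat{\tau}_{j}$ as an FIO associated to the time-$1$ Hamiltonian flow $G$ of $\Omega(x)\cdot\mathbf{n}$ (Remark \ref{note1} and the discussion preceding (\ref{eq:F_epsilonNA})) and after having recorded the Abelian Egorov statement (Lemma \ref{lem:Egorov-a}) for the pull-back factor. Your two-step plan---Heisenberg evolution for the inner $\hat{\tau}_{j}$ conjugation yielding $a\circ G$, then the $k$-branched pull-back Egorov for the outer $\mathcal{F}_{E}\otimes\mathbb{I}$ conjugation, followed by the identification $F_{\epsilon}=G\circ(\tilde{E}_{\epsilon}\times\mbox{id}_{S^{2}})$---is exactly the argument the paper's presentation implicitly relies on, and it is correct.

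Two small remarks. First, your pointwise-in-$\mathbf{n}$ application of Lemma \ref{lem:Egorov-a} is fine, but you should note that the $\hbar^{1-2\mu}S_{\mu}^{m-1}$ remainders there are uniform and smooth in the parameter $\mathbf{n}$ (since $b(\cdot,\cdot,\mathbf{n})$ varies smoothly in $S_{\mu}^{m}(T^{*}S^{1})$), so that after integrating against $|\mathbf{n}\rangle\langle\mathbf{n}|\,d\mathbf{n}_{j}$ they land in $\hbar^{1-2\mu}OPS_{\mu}^{m-1}(T^{*}S^{1}\times S^{2})$. Second, your honest flag on the ``hard part''---a composition calculus for the mixed quantization (\ref{eq:full_quantization}) with subprincipal control in $S_{\mu}^{m}$---is precisely what the paper asserts (without proof) in the paragraph following (\ref{eq:full_quantization}) as a consequence of Theorem \ref{thm:(cahen-laurent-charles)} combined with the standard Weyl calculus; granted that, the rest of your argument is routine.
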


\section{Dynamics on Phase space}

In this section we derive some elementary facts about the canonical
maps (\ref{eq:F_epsilon}) and (\ref{eq:F_epsilonNA}) associated
respectively to $\left\{ \hat{F}_{\nu}\right\} _{\nu\in\mathbb{Z}}$
and $\left\{ \hat{F}_{j}\right\} _{j\in\frac{1}{2}\mathbb{N}}.$ We
introduce the compact trapped sets of Theorem \ref{thm:weyl_law}
and give the hypothesis on which Theorem \ref{thm:gap} is based.

\subsection{Time$-n$ dynamics\label{sub:Time-n-dynamics} }

Let $\mathcal{A}:=\{0,...,k-1\}$ be the alphabet and 
\[
\mathcal{A}^{n}:=\left\{ \epsilon=\epsilon_{1}\epsilon_{2}...\epsilon_{2};\epsilon_{i}\in\mathcal{A}\right\} ,
\]
 the set of $k^{n}$ words on length $n>0$ written with $\mathcal{A}$.
For any $\epsilon\in\mathcal{A}^{n}$ define $E_{\epsilon}^{-n}:=E_{\epsilon_{n}}^{-1}\circ...\circ E_{\epsilon_{1}}^{-1}$
and put $x_{\epsilon}:=E_{\epsilon}^{-n}(x)$. The expansion rate
of this trajectory is then 
\[
E'_{\epsilon}(x):=\left(E^{n}\right)'(x_{\epsilon})=\prod_{j=1}^{n}E'(x_{\epsilon|_{j}})
\]
 with $\epsilon|_{j}:=\epsilon_{1}...\epsilon_{j}$ the troncation
at the $j-$th letter of the word $\epsilon$. We put $\forall\epsilon\in\mathcal{A}^{n}$:
\[
\xi_{x,\epsilon}:=E'_{\epsilon}(x)\{\xi-S_{\epsilon}^{U(1)}(x)\}
\]
 with
\begin{equation}
S_{\epsilon}^{U(1)}(x):=-\sum_{j=1}^{n}E_{\epsilon|_{j}}'(x)^{-1}\cdot\Omega'\left(x_{\epsilon|_{j}}\right),\label{eq:S_U(1)}
\end{equation}
and $\Omega$ as in (\ref{eq:F_epsilon}). In the same manner, if
\[
\mathbf{n}_{x,\epsilon}:=R_{\tau(x_{\epsilon})}\circ R_{\tau(x_{\epsilon|_{n-1}})}\circ...\circ R_{\tau(x_{\epsilon_{1}})}\mathbf{n}
\]
 define 
\[
\xi_{x,\mathbf{n},\epsilon}:=E'_{\epsilon}(x)\{\xi-S_{\epsilon}^{SU(2)}(x,\mathbf{n})\}
\]
 with
\begin{equation}
S_{\epsilon}^{SU(2)}(x,\mathbf{n}):=-\sum_{j=1}^{n}E_{\epsilon|_{j}}'(x)^{-1}\cdot H_{\dagger}\left(x_{\epsilon|_{j}},\mathbf{n}_{x,\epsilon|_{j}}\right),\label{eq:S_SU(2)}
\end{equation}
 and $H_{\dagger}(x,\mathbf{n}):=H(x,R_{\tau(x)}^{-1}\mathbf{n})$,
$H$ as in (\ref{eq:F_epsilonNA}). With these notations the time-$n$
dynamics read
\begin{equation}
F_{\epsilon}^{n}(x,\xi)=(x_{\epsilon},\xi_{x,\epsilon})\mbox{ and }F_{\epsilon}^{n}(x,\xi,\mathbf{n})=(x_{\epsilon},\xi_{x,\mathbf{n},\epsilon},\mathbf{n}_{x,\epsilon});\,\epsilon\in\mathcal{A}^{n}\label{eq:F^n}
\end{equation}
 for respectively the Abelian and non-Abelian case. The inverse maps
$F^{-n}$ are single valued and read, 
\begin{equation}
F^{-n}(x,\xi)=(\; E^{n}x,\left(E^{n}\right)'(x)^{-1}\{\xi-\sum_{j=0}^{n-1}\left(E^{j}\right)'(x)\cdot\Omega'\left(E^{j}x\right)\}\;)\label{eq:F^-n_abel}
\end{equation}
in the Abelian setting, and if $\mathbf{n}_{x}^{(j)}:=R_{\tau(E^{j-1}x)}^{-1}\circ...\circ R_{\tau(x)}^{-1}\mathbf{n}$,
in the non-Abelian setting: 
\begin{equation}
F^{-n}(x,\xi,\mathbf{n})=(\; E^{n}x,\;\left(E^{n}\right)'(x)^{-1}\{\xi-\sum_{j=0}^{n-1}\left(E^{j}\right)'(x)\cdot H_{\dagger}(E^{j}x,\mathbf{n}_{x}^{(j)})\},\;\mathbf{n}_{x}^{(n)}\;)\label{eq:F^-n_non_abel}
\end{equation}

\subsection{The trapped sets }

We refer to the notations defined in the previous subsection. A fundamental
feature of the classical dynamics is that:
\begin{lem}
\label{lem:fuite,}For any $1<\kappa<e_{\min}$, there exists $R>0$,
s.t. for any $\epsilon\in\mathcal{A}$, $\left|\xi\right|\geq R\Rightarrow\left|\xi_{x,\epsilon}\right|\geq\kappa\left|\xi\right|$
and $\left|\xi_{x,\mathbf{n},\epsilon}\right|\geq\kappa\left|\xi\right|$. \end{lem}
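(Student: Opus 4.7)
The plan is to exploit the fact that for $\epsilon \in \mathcal{A}$ (so we are at time $n=1$), the formulas for $\xi_{x,\epsilon}$ and $\xi_{x,\mathbf{n},\epsilon}$ reduce to the product of an expansion factor bounded below by $e_{\min}>1$ and a translation term that is uniformly bounded in the base variables. Since the expansion is strict and the translation is harmless at large $|\xi|$, the inequality should follow from a one-line reverse triangle inequality once we choose $R$ large enough.

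More precisely, for $\epsilon \in \mathcal{A}$ and $n=1$ the definitions (\ref{eq:S_U(1)}) and (\ref{eq:S_SU(2)}) become
\[
S_{\epsilon}^{U(1)}(x) = -\,E'\!\left(E_{\epsilon}^{-1}(x)\right)^{-1} \Omega'\!\left(E_{\epsilon}^{-1}(x)\right), \quad S_{\epsilon}^{SU(2)}(x,\mathbf{n}) = -\,E'\!\left(E_{\epsilon}^{-1}(x)\right)^{-1} H_{\dagger}\!\left(E_{\epsilon}^{-1}(x), R_{\tau(E_{\epsilon}^{-1}(x))}\mathbf{n}\right).
\]
First I would set
\[
M := \max\Bigl(\sup_{x,\epsilon}|S_{\epsilon}^{U(1)}(x)|,\ \sup_{x,\mathbf{n},\epsilon}|S_{\epsilon}^{SU(2)}(x,\mathbf{n})|\Bigr),
\]
which is finite because $\Omega' \in C^{\infty}(S^{1})$, $H_{\dagger} \in C^{\infty}(S^{1}\times S^{2})$, and the index set $\mathcal{A}=\{0,\dots,k-1\}$ together with $S^{1}$, $S^{2}$ are compact; moreover $E'_{\epsilon}(x) = E'(E_{\epsilon}^{-1}(x)) \geq e_{\min}$ uniformly.

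Using the reverse triangle inequality on $\xi_{x,\epsilon} = E'_{\epsilon}(x)\{\xi - S_{\epsilon}^{U(1)}(x)\}$, and similarly on $\xi_{x,\mathbf{n},\epsilon}$, we obtain in both cases
\[
|\xi_{x,\epsilon}|,\ |\xi_{x,\mathbf{n},\epsilon}| \;\geq\; e_{\min}\bigl(|\xi| - M\bigr).
\]
For $1 < \kappa < e_{\min}$ the inequality $e_{\min}(|\xi| - M) \geq \kappa |\xi|$ is equivalent to $|\xi| \geq \tfrac{e_{\min}\, M}{e_{\min} - \kappa}$, so it suffices to take
\[
R \;:=\; \frac{e_{\min}\, M}{e_{\min} - \kappa}.
\]

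There is no real obstacle: the only thing to check is the uniform bound $M < \infty$, which is a compactness statement, and the choice of $R$ is then dictated by a trivial algebraic manipulation. The content of the lemma is simply that the bounded additive perturbations $S_{\epsilon}^{U(1)}, S_{\epsilon}^{SU(2)}$ introduced by the extension cocycle are dwarfed, for large $|\xi|$, by the uniform expansion rate $e_{\min}>1$ inherited from the base map $E$; hence the $\xi$-coordinate of any pre-image under $F_{\epsilon}$ is strictly contracted by a factor at most $\kappa^{-1} < 1$ outside a compact strip $\{|\xi| < R\}$. This is precisely the escape property needed to define the trapped sets $K_{U(1)}$ and $K_{SU(2)}$ in the next subsection.
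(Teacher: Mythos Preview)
Your proof is correct and follows essentially the same idea as the paper's: both exploit that the additive shift is uniformly bounded while the multiplicative factor is at least $e_{\min}$, so for $|\xi|$ large the expansion dominates. The only cosmetic difference is that the paper splits into the cases $\xi>0$ and $\xi<0$ (with constants $C_\pm^{\mathbb{G}}$ tied to $\max/\min$ of $\Omega'$ or $H$), whereas you handle both signs at once via the reverse triangle inequality; your argument is slightly cleaner and yields an equivalent choice of $R$.
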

\begin{proof}
If $\xi>0$, than $\xi_{x,\epsilon}$ (resp. $\xi_{x,\mathbf{n},\epsilon}$)
will be larger than $\kappa\xi$ for some $1<\kappa<e_{\min}$ iff
$\kappa\xi\geq e_{\min}\xi-C_{-}^{\mathbb{G}}\Leftrightarrow\xi\geq C_{-}^{\mathbb{G}}/(e_{\min}-\kappa)$,
with $C_{-}^{U(1)}=\left|\min_{x}\Omega'(x)\right|$ and $C_{-}^{SU(2)}=\left|\min_{x,\mathbf{n}}H(x,\mathbf{n})\right|$.
On the other hand if $\xi<0$ then $\xi_{x,\epsilon}$ (resp. $\xi_{x,\mathbf{n},\epsilon}$)
will be smaller than $\kappa\xi$ iff $\kappa\xi\leq e_{\min}\xi+C_{+}^{\mathbb{G}}\Leftrightarrow\xi\leq-C_{+}^{\mathbb{G}}/(e_{\min}-\kappa)$,
with $C_{+}^{U(1)}=\left|\max_{x}\Omega'(x)\right|$ and $C_{+}^{SU(2)}=\left|\max_{x,\mathbf{n}}H(x,\mathbf{n})\right|$.
So with $R:=\max_{\mathbb{G}}\max\left\{ C_{-}^{\mathbb{G}},C_{+}^{\mathbb{G}}\right\} /(e_{\min}-\kappa)$
the lemma holds.
\end{proof}
As a consequence, for both maps (\ref{eq:F_epsilon}) and (\ref{eq:F_epsilonNA})
there exists a non-empty compact set of points from which some trajectories
do not escape as $n\rightarrow\infty$. Indeed, taking $R>0$ large
enough and $Z_{U(1)}:=S^{1}\times[-R;R]$, $Z_{SU(2)}:=S^{1}\times[-R;R]\times S^{2}$,
$K_{\mathbb{G}}$ can be defined as the limit of a sequence of nested
non-empty compacts sets (see fig. \ref{fig:K_G}):
\begin{equation}
K_{\mathbb{G}}:=\bigcap_{n\geq0}F^{-n}\left(Z_{\mathbb{G}}\right).\label{eq:K_G}
\end{equation}

For some word $\epsilon\in\mathcal{A}^{n}$ we define $\overline{\epsilon}:=\epsilon\overline{00}$
the word of infinite length completed from $\epsilon$ with zeros.
Put $\overline{\mathcal{A}^{n}}:=\left\{ \overline{\epsilon};\epsilon\in\mathcal{A}^{n}\right\} $.
One can than define the set of infinite words as $\mathcal{A}^{\infty}:=\cup_{n>0}\overline{\mathcal{A}^{n}}$. 

\begin{figure}

\includegraphics[scale=0.5]{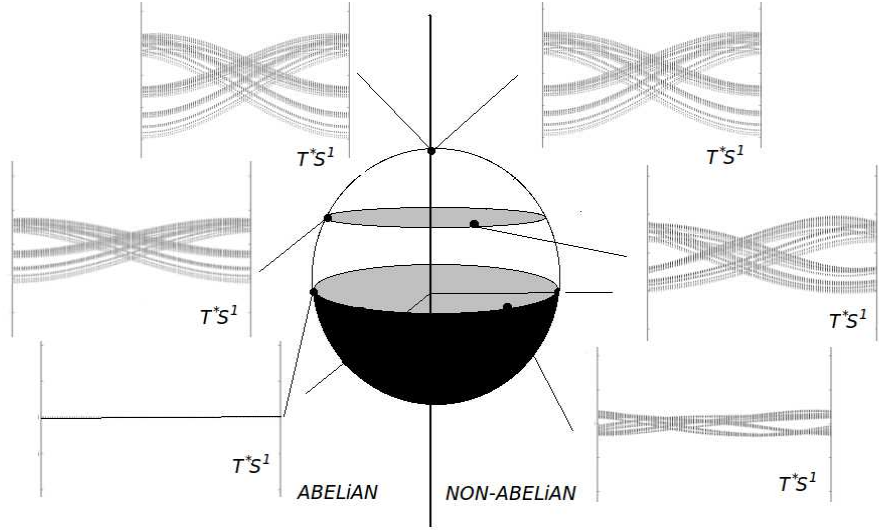}\caption{{\small \label{fig:K_G}Numerical computation of $K_{SU(2)}$ corresponding
to the skew extension of the linear map $E(x)=2x\,\mbox{mod}1$ for
two particular expression of $\tau$. On the left $\tau(x)=e^{i\cos(2\pi x)J_{3}}$
with $J_{3}$ the generator of the rotations around the vertical axis.
In this case $\tau$ maps $S^{1}$ to a $\mbox{U}(1)$ subgroup of
$\mbox{SU}(2)$. The induced dynamics on the sphere is not transitive
as it leaves invariant the geodesics parallel to the equator. Over
any of these, the trapped set corresponds to $K_{U(1)}$ for $\tau(x)=\frac{1}{2\pi}\mathbf{n}_{3}\cos(2\pi x)$,
and degenerates above the equator (this is consistent with the fact
that the canonical map ought not be partially captive in this case).
On the right we break the degeneracy by taking $\tau(x)=e^{i\cos(2\pi x)J_{3}+i0.2\cos(2\pi x)J_{1}}$. }}

\end{figure}

\begin{lem}
\textup{\emph{\label{lem:carat_de_K}$K_{\mathbb{G}}$ can be seen
as the closure of the union of graphs of smooth uniformly bounded
functions $\mathsf{S}_{\epsilon}^{\mathbb{G}};\epsilon\in\mathcal{A}^{\infty}$
over, resp. $S^{1}$ and $S^{1}\times S^{2}$:
\begin{equation}
K_{\mathbb{G}}=\overline{\cup_{\epsilon\in\mathcal{A}^{\infty}}\mathcal{G}\mathsf{S}_{\epsilon}^{\mathbb{G}}.}\label{eq:K_G_as_union}
\end{equation}
 Furthermore, if $S_{\max}^{\mathbb{G}}$ is the uniform bound of
the sequence $\left\{ |\mathsf{S}_{\epsilon}^{\mathbb{G}}|\right\} $
than any point in $F^{-n}\left(Z_{\mathbb{G}}\right)$ is at distance
at most $\left(R+S_{\max}^{\mathbb{G}}\right)e_{\min}^{-n}$ from
$K_{\mathbb{G}}$.}}\end{lem}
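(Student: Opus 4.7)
The plan is to construct explicit smooth graphs whose closure is $K_{\mathbb{G}}$ and simultaneously quantify the convergence rate of the decreasing sequence $F^{-n}(Z_{\mathbb{G}})$ to $K_{\mathbb{G}}$. For each $\overline{\epsilon} \in \mathcal{A}^{\infty}$ I would first define
\[
\mathsf{S}_{\overline{\epsilon}}^{U(1)}(x) := -\sum_{j\geq 1} \frac{\Omega'(x_{\overline{\epsilon}|_j})}{E'_{\overline{\epsilon}|_j}(x)}, \qquad \mathsf{S}_{\overline{\epsilon}}^{SU(2)}(x,\mathbf{n}) := -\sum_{j\geq 1} \frac{H_{\dagger}(x_{\overline{\epsilon}|_j},\mathbf{n}_{x,\overline{\epsilon}|_j})}{E'_{\overline{\epsilon}|_j}(x)},
\]
the formal $n\to\infty$ limits of (\ref{eq:S_U(1)}) and (\ref{eq:S_SU(2)}). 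Since $E'_{\overline{\epsilon}|_j}(x) \geq e_{\min}^{j}$ and $\Omega', H_{\dagger}$ are uniformly bounded on the compact base spaces, each summand is $O(e_{\min}^{-j})$; absolute convergence then yields a uniform bound $S_{\max}^{\mathbb{G}}$ independent of $\overline{\epsilon}$. Because the inverse branches $E^{-1}_{\epsilon}$ are $C^{\infty}$-bounded and contract by $e_{\min}^{-1}$ at every step, all partial derivatives of the $j$-th summand remain dominated by a geometric tail; hence the series converges in $C^{\infty}$ and the limits $\mathsf{S}_{\overline{\epsilon}}^{\mathbb{G}}$ are smooth.

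The core technical step is then a shift/cocycle identity. Denoting $\sigma$ the shift on $\mathcal{A}^{\infty}$,
\[
\mathsf{S}_{\overline{\epsilon}}^{\mathbb{G}}(y,\cdot) - S_{\overline{\epsilon}|_n}^{\mathbb{G}}(y,\cdot) = E'_{\overline{\epsilon}|_n}(y)^{-1}\,\mathsf{S}_{\sigma^n\overline{\epsilon}}^{\mathbb{G}}\bigl(y_{\overline{\epsilon}|_n},\cdot\bigr),
\]
obtained by direct re-indexing and the multiplicative cocycle $E'_{\overline{\epsilon}|_j}(y) = E'_{\overline{\epsilon}|_n}(y)\,E'_{\sigma^n\overline{\epsilon}|_{j-n}}(y_{\overline{\epsilon}|_n})$; in the non-Abelian case one additionally needs the rotation cocycle $\mathbf{n}_{y,\overline{\epsilon}|_j} = \mathbf{n}_{y_{\overline{\epsilon}|_n},\sigma^n\overline{\epsilon}|_{j-n}}$ evaluated with initial datum $\mathbf{n}_{y,\overline{\epsilon}|_n}$. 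Combined with the uniform bound, this yields the tail estimate $|\mathsf{S}_{\overline{\epsilon}}^{\mathbb{G}} - S_{\overline{\epsilon}|_n}^{\mathbb{G}}| \leq S_{\max}^{\mathbb{G}}\,e_{\min}^{-n}$.

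From here the two inclusions become short. For $\overline{\cup_{\overline{\epsilon}}\mathcal{G}\mathsf{S}_{\overline{\epsilon}}^{\mathbb{G}}} \subseteq K_{\mathbb{G}}$: on a graph point, (\ref{eq:F^n}) together with the shift identity forces the $\xi$-coordinate of $F_{\overline{\epsilon}|_n}^{n}(\cdot)$ to equal $\mathsf{S}_{\sigma^n\overline{\epsilon}}^{\mathbb{G}}(x_{\overline{\epsilon}|_n},\cdot)$, uniformly bounded by $S_{\max}^{\mathbb{G}}$; taking $R \geq S_{\max}^{\mathbb{G}}$ in Lemma~\ref{lem:fuite,} places every graph inside $\bigcap_{n} F^{-n}(Z_{\mathbb{G}}) = K_{\mathbb{G}}$, and closedness of $K_{\mathbb{G}}$ extends this to the closure. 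For the converse with the quantitative estimate: given $(x,\xi,\cdot) \in F^{-n}(Z_{\mathbb{G}})$, choose $\epsilon \in \mathcal{A}^n$ realising this membership, so $|\xi - S_{\epsilon}^{\mathbb{G}}(x,\cdot)| \leq R/E'_{\epsilon}(x) \leq R\,e_{\min}^{-n}$; completing $\overline{\epsilon} := \epsilon\overline{0} \in \mathcal{A}^{\infty}$ and adding the tail estimate produces
\[
|\xi - \mathsf{S}_{\overline{\epsilon}}^{\mathbb{G}}(x,\cdot)| \leq (R + S_{\max}^{\mathbb{G}})\,e_{\min}^{-n},
\]
which is the second assertion; intersecting over $n$ then proves (\ref{eq:K_G_as_union}).

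I expect the main obstacle to be bookkeeping rather than conceptual: the $C^{\infty}$ convergence of the series and the cocycle/shift identity in the non-Abelian setting, where the rotation dynamics $R_{\tau(\cdot)}$ must be tracked alongside the expansion. Because $R_{\tau(x)}$ acts on $S^{2}$ by isometries and $H_{\dagger}$ is smooth and uniformly bounded in $\mathbf{n}$, all estimates stay geometric and the non-Abelian argument parallels the Abelian one line by line.
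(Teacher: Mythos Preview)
Your proposal is correct and follows essentially the same route as the paper: define $\mathsf{S}_{\overline{\epsilon}}^{\mathbb{G}}$ as the $C^{\infty}$-limit of the partial sums $S_{\epsilon|_n}^{\mathbb{G}}$, use the geometric decay $E'_{\epsilon|_j}(x)\geq e_{\min}^{j}$ to get uniform bounds and convergence, and read off the distance estimate from the tail. Your explicit shift/cocycle identity is a clean way to package what the paper does termwise (showing $|S_{\epsilon|_n}^{\mathbb{G}}-S_{\epsilon|_{n-1}}^{\mathbb{G}}|\leq e_{\min}^{-n}\Vert\Omega'\Vert_\infty$ etc.), and you are slightly more explicit than the paper about the inclusion $\mathcal{G}\mathsf{S}_{\overline{\epsilon}}^{\mathbb{G}}\subset K_{\mathbb{G}}$, but the arguments are the same in substance.
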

\begin{proof}
Points in $F^{-n}\left(Z_{U(1)}\right)$ can be written as 
\[
\bigcap_{x\in S^{1}}\bigcap_{\epsilon\in\mathcal{A}^{n}}\left\{ F^{-n}(x_{\epsilon},\xi);\,\xi\in[-R;R]\right\} .
\]
Using (\ref{eq:F^-n_abel}) a short calculation gives $F^{-n}(x_{\epsilon},\xi)=\left(x,E_{\epsilon}'(x)^{-1}\cdot\xi+S_{\epsilon}^{U(1)}(x)\right)$
with $S_{\epsilon}^{U(1)}$ as in (\ref{eq:S_U(1)}). As $n$ grows
we have that $\left|E_{\epsilon}'(x)^{-1}\cdot\xi\right|\leq e_{\min}^{-n}R\rightarrow0$.
On the other hand, for any $\epsilon\in\mathcal{A}^{\infty}$, any
$n>0$, $\left|S_{\epsilon|_{n}}^{U(1)}(x)\right|\leq\left\Vert \Omega'\right\Vert _{\infty}\frac{1}{e_{\min}-1}=:S_{max}^{U(1)}$
and $\left|S_{\epsilon|_{n}}^{U(1)}(x)-S_{\epsilon|_{n-1}}^{U(1)}(x)\right|=\left|E_{\epsilon|_{n}}'(x)^{-1}\cdot\Omega'\left(x_{\epsilon|_{n}}\right)\right|\leq e_{\min}^{-n}\left\Vert \Omega'\right\Vert _{\infty}.$
The same kind of estimates hold true for the sequences $\partial_{x}^{\alpha}S_{\epsilon|_{n}}^{U(1)}(x)$
for any order of derivation proving the convergence in $C^{\infty}\left(S^{1}\right)$
of the sequence of functions $\left(S_{\epsilon|_{n}}^{U(1)}(x)\right)_{n\geq0}$.
Let us write $\mathsf{S}_{\epsilon}^{U(1)}=\lim_{n\rightarrow\infty}S_{\epsilon|_{n}}^{U(1)}$
and $\mathcal{G}\mathsf{S}_{\epsilon}^{U(1)}:=\left\{ (x,\mathsf{S}_{\epsilon}^{U(1)}(x))\,;x\in I\right\} $
its graph over $S^{1}$. As $n$ grows $F^{-n}\left(Z_{U(1)}\right)$
converges to $\cup_{\bar{\epsilon}\in\overline{\mathcal{A}^{n}}}\mathcal{G}\mathsf{S}_{\overline{\epsilon}}^{U(1)}$
and this gives (\ref{eq:K_G_as_union}) for $\mathbb{G}\equiv\mbox{U}(1)$.
Since, for any $\epsilon\in\mathcal{A}^{\infty}$, 
\[
\left|E_{\epsilon|_{n}}'(x)^{-1}\cdot\xi+S_{\epsilon|_{n}}^{U(1)}(x)-\mathsf{S}_{\epsilon}^{U(1)}(x)\right|\leq e_{\min}^{-n}\left(R+S_{max}^{U(1)}\right),
\]
we get that points in $F^{-n}\left(Z_{U(1)}\right)$ lie at distance
at most $e_{\min}^{-n}\left(R+S_{max}^{U(1)}\right)$ from $K_{U(1)}$.
The exact same argument can be carried out for $\mathbb{G}\equiv\mbox{SU}(2)$
with $S_{\epsilon|_{n}}^{SU(2)}$ as in (\ref{eq:S_SU(2)}) which
is uniformly bounded by $\left\Vert H\right\Vert _{\infty}\frac{1}{e_{\min}-1}=:S_{max}^{SU(2)}$,
and converges in the $C^{\infty}-$topology to $\mathsf{S}_{\epsilon}^{SU(2)}$.
\end{proof}
We do not know much more at present about these elusive sets \cite{Tsujii_00}.
Using the characterization (\ref{eq:K_G_as_union}) one can show quite
easily that over some point $\left\{ x\right\} $ or $\left\{ (x,\mathbf{n})\right\} $
the trapped set is either reduced to a unique point $\{\xi\}$ or
has no isolated points. This however does not inform us about their
dimension more than the obvious bound $1\leq\mbox{dim}K_{U(1)}\leq2$
and $3\leq\mbox{dim}K_{SU(2)}\leq4$.

\subsection{The partially captive property\label{sub:The-partially-captive}}

For any starting point on $K_{\mathbb{G}}$ , for any time $n>0$,
at least one of $k^{n}$ different trajectories Stays in $K_{\mathbb{G}}$.
In the light of lemma \ref{lem:carat_de_K} we have obvious trapped
trajectories from the fact that $\forall\epsilon\in\mathcal{A}^{\infty},\epsilon_{0}\in\mathcal{A},$
\[
F_{\epsilon_{0}}\left(x,\mathsf{S}_{\epsilon_{0}\epsilon}^{U(1)}(x)\right)=\left(x_{\epsilon_{0}},\mathsf{S}_{\epsilon}^{U(1)}(x_{\epsilon_{0}})\right)
\]
 and similarly 
\[
F_{\epsilon_{0}}\left(x,\mathsf{S}_{\epsilon_{0}\epsilon}^{SU(2)}(x,\mathbf{n}),\mathbf{n}\right)=\left(x_{\epsilon_{0}},\mathsf{S}_{\epsilon}^{SU(2)}(x_{\epsilon_{0}},\mathbf{n}_{x,\epsilon_{0}}),\mathbf{n}_{x,\epsilon_{0}}\right),
\]
showing that points in $K_{\mathbb{G}}$ lying on the branch $\mathcal{G}\mathsf{S}_{\epsilon_{0}\epsilon}^{\mathbb{G}}$
jump to the branch $\mathcal{G}\mathsf{S}_{\epsilon}^{\mathbb{G}}$
and so on. When $\tau$ is not a co-boundary, its seems only an unlikely
coincidence that some other trajectory $\epsilon'\neq\epsilon_{0}$
would yield $\mathsf{S}_{\epsilon_{0}\epsilon}^{U(1)}(x)=\mathsf{S}_{\tilde{\epsilon}}^{U(1)}(x_{\epsilon'})$
or $\mathsf{S}_{\epsilon_{0}\epsilon}^{SU(2)}(x,\mathbf{n})=\mathsf{S}_{\tilde{\epsilon}}^{SU(2)}(x_{\epsilon'},\mathbf{n}_{x,\epsilon'})$,
for some $\tilde{\epsilon}\in\mathcal{A}^{\infty}$. Thus one expects
most trajectories to eventually escape in the non-compact direction. 
\begin{defn}
\label{partialy-captive}Let $\mathcal{N}(n)\leq k^{n}$ be the maximal
cardinality (over different starting points in $Z_{\mathbb{G}}$)
of the set of trajectories $\epsilon\in\mathcal{A}^{n}$ that \emph{do
not escape} from $Z_{\mathbb{G}}$. The map $\widehat{E}_{\tau}$
of eq.(\ref{eq:extension}) will be called \emph{partially captive}
iff:
\begin{equation}
\lim_{n\rightarrow\infty}\frac{\log\left(\mathcal{N}(n)\right)}{n}=0.\label{eq:partially_captive}
\end{equation}
 
\end{defn}
This is the hypothesis under which Theorem \ref{thm:gap} holds. It
is known to be generically true in the Abelian setting \cite{Tsujii_07},
but one can reasonably expect the arguments of the quoted article
to remain valid in the non-Abelian case.

\section{Proof of theorem \ref{thm:gap}}

The Abelian case is treated in detail in \cite{fred_gap_09}. We simply
adapt Faure's proof to this context. In the following $1<\kappa<e_{\min}$
and $R>0$ are chosen as in lemma \ref{lem:fuite,}. Using the quantization
rule (\ref{eq:full_quantization}), for any $j>0$, $m<0$ consider
the following Hilbert spaces of distributions 
\[
H_{j^{-1}}^{m}\left(S^{1}\right)\otimes\mathcal{D}_{j}=\mbox{Op}_{j}\left(A_{m}\right)^{-1}\left(L^{2}\otimes\mathcal{D}_{j}\right),
\]
where $A_{m}(x,\xi,\mathbf{n})\equiv A_{m}(\left|\xi\right|)\in(0,1]$
is an elliptic symbol in $S_{0}^{m}\left(T^{*}S^{1}\times S^{2}\right)$
(independent of $x,\mathbf{n}$), constant and equal to one for $|\xi|\leq R$
and equal to $R^{|m|}|\xi|^{m}$ pour $|\xi|\geq R+\eta$ with $\eta>0$
arbitrary small. As subspaces of $\mathcal{D}'(S^{1})$, $H_{j^{-1}}^{m}\left(S^{1}\right)$
and $H^{m}\left(S^{1}\right)$ are isomorphic to one-an-other but
posses different norms. Since the spectrum does not depend on the
choice of a norm, the spectrum of $\hat{F}_{j}:H_{j^{-1}}^{m}\left(S^{1}\right)\otimes\mathcal{D}_{j}\rightarrow H_{j^{-1}}^{m}\left(S^{1}\right)\otimes\mathcal{D}_{j}$
is no other than the Ruelle spectrum of resonances introduced in theorem
\ref{thm:discrete_spect}. Consider 
\[
\hat{Q}_{m}:=\mbox{Op}_{j}\left(A_{m}\right)\hat{F}_{j}\mbox{Op}_{j}\left(A_{m}\right)^{-1}.
\]
By construction $\hat{Q}_{m}$ acts in $L^{2}\left(S^{1}\right)\otimes\mathcal{D}_{j}$
and is unitary equivalent to $\hat{F}_{j}|_{H_{j^{-1}}^{m}\otimes\mathcal{D}_{j}}$.
On the other hand \cite{Kato} $\forall n\in\mathbb{N}^{*}$
\[
r_{s}\left(\hat{Q}_{m}\right)\leq\left\Vert \hat{Q}_{m}^{n}\right\Vert ^{\frac{1}{n}}\leq\left\Vert \hat{Q}_{m}^{n*}\hat{Q}_{m}^{n}\right\Vert ^{\frac{1}{2n}}.
\]
 Define 
\[
\hat{P}^{(n)}:=\hat{Q}^{n*}\hat{Q}^{n}=\mbox{Op}_{j}\left(A_{m}\right)^{-1}\hat{F}_{j}^{n*}\mbox{Op}_{j}\left(A_{m}^{n}\right)\hat{F}_{j}^{n}\mbox{Op}_{j}\left(A_{m}\right)^{-1}.
\]
From Egorov (\ref{eq:egorov_na}) and composition theorems, using
the notations of subsection \ref{sub:Time-n-dynamics}, we get that
$\hat{P}^{(n)}\in OPS_{0}^{m}$ and its symbol reads
\[
P^{(n)}\left(x,\xi;\mathbf{n}\right)=\sum_{\epsilon\in\mathcal{A}^{n}}\frac{1}{E'_{\epsilon}(x)}\frac{A_{m}^{2}(\xi_{x,\mathbf{n},\epsilon})}{A_{m}^{2}(\xi)}\;\mbox{ mod}j^{-1}S_{0}^{-1}.
\]
Faure's simple idea is to use the basic properties of the classical
dynamics to bound this positive symbol, and then use the $L^{2}-$continuity
theorem (lemma \ref{lem:L^2continuity}) to conclude. At $x\in S^{1}$
and $\mathbf{n}\in S^{2}$ fixed we distinguish three cases. Using
lemma \ref{lem:fuite,} we get:
\begin{enumerate}
\item If $|\xi|>R$, then $\forall\epsilon\in\mathcal{A}^{n}$, $\frac{A_{m}^{2}(\xi_{x,\mathbf{n},\epsilon})}{A_{m}^{2}(\xi)}\leq\left(\kappa^{2m}\right)^{n}$.
\item Si $|\xi|\leq R$ but $|\xi_{x,\mathbf{n},\epsilon|_{n-1}}|>R$ then
we can write
\[
\frac{A_{m}^{2}(\xi_{x,\mathbf{n},\epsilon})}{A_{m}^{2}(\xi)}=\overbrace{\frac{A_{m}^{2}(\xi_{x,\mathbf{n},\epsilon})}{A_{m}^{2}(\xi_{x,\mathbf{n},\epsilon|_{n-1}})}}^{\leq\kappa^{2m}}\overbrace{\frac{A_{m}^{2}(\xi_{x,\mathbf{n},\epsilon|_{n-1}})}{A_{m}^{2}(\xi_{x,\mathbf{n},\epsilon|_{n-2}})}...\frac{A_{m}^{2}(\xi_{x,\mathbf{n},\epsilon_{1}})}{A_{m}^{2}(\xi)}}^{\leq1}\leq\kappa^{2m}.
\]

\item In all other case ( $|\xi|\leq R|$ and $|\xi_{x,\mathbf{n},\epsilon|_{n-1}}|\leq R$),
$\frac{A_{m}^{2}(\xi_{x,\mathbf{n},\epsilon})}{A_{m}^{2}(\xi)}\leq1$,
but by definition \ref{partialy-captive}, the number of such trajectories
is bounded by $\mathcal{N}(n-1)$.
\end{enumerate}
Using this decomposition we get
\[
P^{(n)}\left(x,\xi;\mathbf{n}\right)\leq\frac{1}{E_{\min}^{n}}\left(\left(k^{n}-\mathcal{N}(n-1)\right)\kappa^{2m}+\mathcal{N}(n-1)\right)+\mathcal{O}_{n}\left(j^{-1}\right)
\]
 Set $\mathcal{B}(n):=\left(\frac{k}{E_{\min}}\right)^{n}\kappa^{2m}+\frac{\mathcal{N}(n-1)}{E_{\min}^{n}}$.
Remark that at $n$ fixed the first term goes to zero as $m\rightarrow-\infty$.
The $L^{2}-$continuity theorem gives 
\[
\left\Vert \hat{P}^{(n)}\right\Vert \leq\mathcal{B}(n)+\mathcal{O}_{n}\left(j^{-1}\right),
\]
so 
\[
r_{s}\left(\hat{Q}_{m}\right)\leq\left(\mathcal{B}(n)+\mathcal{O}_{n}\left(j^{-1}\right)\right)^{\frac{1}{2n}};\;\forall n\in\mathbb{N}^{*}.
\]
 Letting first $j\rightarrow\infty$, then $m\rightarrow-\infty$
et finally $n\rightarrow\infty$ we finally obtain a nice expression
for the the spectral radius, as $j$ grows: 
\[
r_{s}\left(\hat{F}_{j}|_{H^{m}\otimes\mathcal{D}_{j}}\right)\leq\sqrt{\frac{1}{E_{\min}}\exp\left(\liminf_{n\rightarrow\infty}\left(\frac{\log\left(\mathcal{N}(n)\right)}{n}\right)\right)}+o(1).
\]
 The partially captive assumption (\ref{eq:partially_captive}) then
yields (\ref{eq:gap}) of Theorem \ref{thm:gap}. The second statement
of Theorem \ref{thm:gap} is derived using $\left\Vert \hat{Q}_{m}^{n}\right\Vert \leq\sqrt{\left\Vert \hat{P}^{(n)}\right\Vert }=\left(\mathcal{B}(n)+\mathcal{O}_{n}\left(j^{-1}\right)\right)^{\frac{1}{2}}$
(polar decomposition \cite{gohberg-00}). With the partially captive
assumption, for any $c>0$ an $n$ large enough $\mathcal{N}(n)<e^{nc}$,
so for any $\rho>E_{\min}^{-1/2}$ , $\frac{\mathcal{N}(n)}{E_{\min}^{n}}<\rho^{2n}.$
Thus for $m$ sufficiently negative, $j,\, n$ sufficiently large
$\left\Vert \hat{Q}_{m}^{n}\right\Vert :=\left\Vert \hat{F}^{n}\right\Vert _{H_{j^{-1}}^{m}\otimes\mathcal{D}_{j}}\leq\rho^{n}$.

\section{Proof of theorem \ref{thm:weyl_law}}

To treat simultaneously both cases $\mathbb{G}\equiv\mbox{U}(1)$;
$\mathbb{G}\equiv\mbox{SU}(2)$ let us fix some notations. Let $M_{\mathbb{G}}$
be the classical phase space associated to the FIO $\hat{F}_{\alpha}$;
so $M_{U(1)}\equiv T^{*}S^{1}$ and $M_{SU(2)}\equiv T^{*}S^{1}\times S^{2}$.
We write points on $M_{\mathbb{G}}$ as $\rho=(\rho_{c},\xi)$ with
$\rho_{c}$ the components along the compact directions of $M_{\mathbb{G}}$.
We set $\hbar=\nu^{-1},\,\nu>0$ or $\hbar=j^{-1},\, j>0$ and $\mbox{O}\mbox{p}$
will stand for the associated quantification either (\ref{eq:weyl_quant})
or (\ref{eq:full_quantization}).

\subsection{The escape function}

For any closed subset $A$ of $M_{\mathbb{G}}$ we denote by $A^{\delta}$
its closed $\delta-$neighbourhood.
\begin{lem}
(Existence of an escape function). \label{lem:Escape-function}$\exists\mathsf{C}_{0},\,\mathsf{C}_{1}>0$
such that $\forall m<0,$ $\forall\,0\leq\mu<\frac{1}{2}$; and $\forall\,1<\kappa<e_{\min}$,
there exists an elliptic symbol in $\hbar^{\mu m}S_{\mu}^{m}\left(M_{\mathbb{G}}\right)$,
called the \emph{escape function} and written $A_{m,\mu}^{\mathbb{G}}$,
satisfying the following property:

$\forall\rho\notin K_{\mathbb{G}}^{\mathsf{C}_{0}\hbar^{\mu}}$, $\forall\epsilon\in\mathcal{A}$,
$A_{m,\mu}^{\mathbb{G}}$ deacreases stricly along the trajectories
of $F_{\epsilon}$:
\begin{equation}
\frac{A_{m,\mu}^{\mathbb{G}}\circ F_{\epsilon}}{A_{m,\mu}^{\mathbb{G}}}(\rho)\leq\mathsf{C}_{1}\kappa^{m}.\label{eq:escape_estimate}
\end{equation}
\end{lem}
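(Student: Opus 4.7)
The plan is to build $A_{m,\mu}^{\mathbb{G}}$ as a smoothed maximum over the first $N_\mu\sim \mu|\log\hbar|/\log e_{\min}$ forward iterates of a single-step escape weight. The integer $N_\mu$ is chosen precisely so that the Jacobian growth $\|DF^{N_\mu}\|\lesssim e_{\min}^{N_\mu}\lesssim \hbar^{-\mu}$ saturates the $S_\mu^m$ derivative scale.

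First, I would fix a smooth radial base weight $A_m^0(\xi)\in S_0^m$, equal to $1$ for $|\xi|\le R$ and to $(|\xi|/R)^m$ for $|\xi|\ge R+1$. Lemma \ref{lem:fuite,} immediately gives $A_m^0(\xi_{x,\mathbf{n},\epsilon})\le \mathsf{C}_1\kappa^m A_m^0(\xi)$ for every branch $\epsilon$ whenever $|\xi|>R$, so the required decrease \eqref{eq:escape_estimate} is already in hand outside the strip $Z_{\mathbb{G}}$. Next, I would invoke Lemma \ref{lem:carat_de_K}: every $\rho\in F^{-n}(Z_{\mathbb{G}})$ lies within distance $(R+S_{\max}^{\mathbb{G}})e_{\min}^{-n}$ from $K_{\mathbb{G}}$. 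Fixing $\mathsf{C}_0:=R+S_{\max}^{\mathbb{G}}+1$ and $N_\mu:=\lceil \mu|\log\hbar|/\log e_{\min}\rceil$, any $\rho\notin K_{\mathbb{G}}^{\mathsf{C}_0\hbar^\mu}$ is automatically outside $F^{-N_\mu}(Z_{\mathbb{G}})$, so \emph{all} $k^{N_\mu}$ forward trajectories $F_\epsilon^{N_\mu}(\rho)$ have already left the strip $|\xi|\le R$.

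I would then set
\[
A_{m,\mu}^{\mathbb{G}}(\rho)\;:=\;\max_{0\le n\le N_\mu}\ \max_{\epsilon\in\mathcal{A}^n}\ \kappa^{-mn}\, A_m^0\bigl(\xi_{F^n_\epsilon(\rho)}\bigr),
\]
replacing each maximum by a smooth soft-max (e.g.\ $\max(x,y)\approx \lambda^{-1}\log(e^{\lambda x}+e^{\lambda y})$ with $\lambda$ a large fixed constant) in order to keep $A_{m,\mu}^{\mathbb{G}}\in C^\infty$. Ellipticity would follow from the two limiting regimes: the $n=0$ term yields $A_m^0(\xi)\asymp \langle\xi\rangle^m$ as $|\xi|\to\infty$, while near $K_{\mathbb{G}}$ the $n=N_\mu$ term equals $\kappa^{-mN_\mu}$, which can be driven arbitrarily close to $\hbar^{\mu m}$ by choosing $\kappa$ close enough to $e_{\min}$. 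The decrease estimate \eqref{eq:escape_estimate} then follows by direct inspection: under $F_{\epsilon'}$ the orbit $F^n_\epsilon$ starting at $F_{\epsilon'}(\rho)$ coincides with the tail of the orbit $F^{n+1}_{\epsilon\epsilon'}$ starting at $\rho$, so the substitution $n\mapsto n-1$ produces an overall factor $\kappa^m$; the only boundary contribution is at $n=N_\mu+1$, and on the set $M_{\mathbb{G}}\setminus K_{\mathbb{G}}^{\mathsf{C}_0\hbar^\mu}$ that iterate sits in the expanding region $|\xi|>R$, where the first step supplies the missing $\kappa^m$.

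The main obstacle will be the symbol-class bookkeeping. By the chain rule one has $|\partial^\gamma \xi_{F^n_\epsilon(\rho)}|\lesssim e_{\min}^{n|\gamma|}$, hence a $\gamma$-derivative of $A_m^0\bigl(\xi_{F^n_\epsilon(\rho)}\bigr)$ is controlled by $e_{\min}^{n|\gamma|}\langle\xi\rangle^{m-|\beta|}\lesssim \hbar^{-\mu|\gamma|}\langle\xi\rangle^{m-|\beta|}$, exactly at the $S_\mu^m$ rate. The soft-max preserves these bounds since all its components satisfy them uniformly in $(n,\epsilon)$. This is precisely why the cut-off time $N_\mu\sim \mu|\log\hbar|/\log e_{\min}$ is sharp: any longer iteration would push the Jacobian beyond $\hbar^{-\mu}$ and destroy the pseudodifferential calculus, while any shorter one would fail to push all orbits starting outside $K_{\mathbb{G}}^{\mathsf{C}_0\hbar^\mu}$ into the expanding region.
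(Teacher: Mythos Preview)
Your overall strategy---iterating a base weight to time $N_\mu\sim \mu|\log\hbar|/\log e_{\min}$ so that Lemma~\ref{lem:carat_de_K} forces every orbit out of the strip---is exactly the paper's. The implementation, however, has two real gaps.

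The first is the symbol--class bookkeeping. Your chain--rule bound $|\partial^\gamma\xi_{F^n_\epsilon(\rho)}|\lesssim e_{\min}^{n|\gamma|}$ is backwards: since $\xi_{F^n_\epsilon}=E'_\epsilon(x)\bigl(\xi-S_\epsilon^{\mathbb G}(\rho_c)\bigr)$ one has $\partial_\xi\xi_{F^n_\epsilon}=E'_\epsilon(x)\in[e_{\min}^n,e_{\max}^n]$, so $e_{\min}^n$ is a \emph{lower} bound, not an upper one. With the honest upper bound $e_{\max}^n$ the na\"ive Fa\`a--di--Bruno argument only yields $S_{\tilde\mu}^m$ with $\tilde\mu=\mu\log e_{\max}/\log e_{\min}$, which for non--linear $E$ can exceed $\tfrac12$ and kill the calculus. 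Second, the log--sum--exp soft--max does not preserve decay: when all its arguments tend to $0$ (as happens for $|\xi|\to\infty$, where every $\kappa^{-mn}A_m^0(\xi_{F^n_\epsilon})\to 0$) it tends to the positive constant $\lambda^{-1}\log(\#\text{terms})$, so your symbol is not $\mathcal O(\langle\xi\rangle^m)$ and is not even of order $m$.

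The paper avoids both problems with a single device: instead of a smoothed max with weights $\kappa^{-mn}$, it takes the \emph{normalised sum} at the fixed depth $n=N_\mu$ with the actual Jacobian as weight,
\[
\tilde A_{m,\mu}^{\mathbb G}:=\frac{1}{k^n}\sum_{\epsilon\in\mathcal A^n}\frac{A_m\circ F^n_\epsilon}{|E'_\epsilon|^{m}}.
\]
This is already smooth, so no soft--max is needed, and outside $F^{-n}(Z_{\mathbb G})$ the weight $|E'_\epsilon|^{-m}$ exactly cancels the $|E'_\epsilon|^{m}$ hidden in $A_m\circ F^n_\epsilon$, leaving $\frac{R^{|m|}}{k^n}\sum_\epsilon|\xi-S^{\mathbb G}_\epsilon(\rho_c)|^m$. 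No factor of $E'_\epsilon$ survives, so the $S_\mu^m$ estimates depend only on the lower bound $|\xi-S_\epsilon|\gtrsim\hbar^\mu$ (which holds off $K_{\mathbb G}^{\mathsf C_0\hbar^\mu}$) and never see $e_{\max}$. The cost is that the decrease ratio no longer telescopes as cleanly as in your max construction; the paper controls it via the pressure inequality $e^{nP(|m|)-C}\le\sum_\epsilon|E'_\epsilon(x)|^m\le e^{nP(|m|)+C}$, which produces the uniform constant $\mathsf C_1=e^{2C}$.
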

\begin{proof}
Let $m<0$. Let $R$ be as in lemma \ref{lem:fuite,} and define $A_{m}\in S_{0}^{m}\left(M_{\mathbb{G}}\right)$;
$A_{m}(\rho)\in(0,1]$ s.t.
\begin{equation}
\begin{array}{cccc}
A_{m}(\rho) & = & 1 & \mbox{if }\ensuremath{|\xi|}\leq R\\
 & = & \left(\frac{|\xi|}{R}\right)^{m} & \mbox{if }\ensuremath{|\xi|}\geq R+\eta
\end{array}\label{eq:A_m}
\end{equation}
 with $\eta>0$ arbitrarily small. Put
\begin{equation}
\tilde{A}_{m,\mu}^{\mathbb{G}}:=\frac{1}{k^{n}}\sum_{\epsilon\in\mathcal{A}^{n}}\frac{A_{m}\circ F_{\epsilon}^{n}}{\left|E'_{\epsilon}\right|^{m}};\label{eq:A_m,nu,tilda}
\end{equation}
 with $\mbox{ with }n=n(\hbar,\mu)$ such that
\begin{equation}
e_{\min}^{-n}=\mathcal{O}(1)\hbar^{\mu}\Leftrightarrow n(\hbar,\mu)=[\mu\frac{\log\hbar^{-1}}{\log e_{min}}].\label{eq:n(h,mu)}
\end{equation}
For any point $\rho\notin F^{-n}\left(Z_{\mathbb{G}}\right)$, we
have, by lemma \ref{lem:fuite,} , (\ref{eq:F^n}) and (\ref{eq:A_m}),
that
\[
\tilde{A}_{m,\mu}^{\mathbb{G}}(\rho):=\frac{R^{|m|}}{k^{n}}\sum_{\epsilon\in\mathcal{A}^{n}}\left|\xi-S_{\epsilon}^{\mathbb{G}}(\rho_{c})\right|^{m}.
\]
 Since, from the proof of lemma \ref{lem:carat_de_K}, we know that
$S_{\epsilon}^{\mathbb{G}}(\rho_{c})$ is smooth (uniformly in $n$),
we get directly, with (\ref{eq:n(h,mu)}), that the symbol class estimates
(\ref{eq:S^m_nu}) and (\ref{eq:S^m_mu(T*S_times_S^2)}) of $\hbar^{\mu m}S_{\mu}^{m}$
are satisfied as long as $\left|\xi-S_{\epsilon}^{\mathbb{G}}(\rho_{c})\right|\geq e_{\min}^{-n}=\mathcal{O}(1)\hbar^{\mu}$. 

By lemma \ref{lem:carat_de_K} if $\rho$ is at distance at least
$(R+2S_{\max}^{\mathbb{G}})e_{\min}^{-n}$ from $K_{\mathbb{G}}$
than $\rho$ is both out of $F^{-n}(Z_{\mathbb{G}})$ and satisfies
$\left|\xi-S_{\epsilon}^{\mathbb{G}}(\rho_{c})\right|\geq e_{\min}^{-n}$.
We can always smooth out $\tilde{A}_{m,\mu}^{\mathbb{G}}$ near $K_{\mathbb{G}}$
to define $A_{m,\mu}^{\mathbb{G}}\in\hbar^{\mu m}S_{\mu}^{m}$ so
that $\tilde{A}_{m,\nu}^{\mathbb{G}}\equiv A_{m,\nu}^{\mathbb{G}}$
out of the neighbourhood $K_{\mathbb{G}}^{\mathsf{C}_{0}e_{\min}^{-n}}\supseteq F^{-n}(Z_{\mathbb{G}})$
with $\mathsf{C}_{0}=2(R+2S_{\max}^{\mathbb{G}})$. 

On the other hand, again by lemma \ref{lem:fuite,} and (\ref{eq:A_m,nu,tilda}),
out of $F^{-n}(Z_{\mathbb{G}})$ we also have that, for any letter
$\epsilon$, 
\[
\frac{\tilde{A}_{m,\mu}^{\mathbb{G}}\left(F_{\epsilon_{0}}(\rho)\right)}{\tilde{A}_{m,\mu}^{\mathbb{G}}(\rho)}\leq\kappa^{m}\frac{\sum_{\epsilon\in\mathcal{A}^{n}}\left|E'_{\epsilon}(x)\right|^{m}}{\sum_{\epsilon\in\mathcal{A}^{n}}\left|E'_{\epsilon}(x_{\epsilon_{0}})\right|^{m}}.
\]
Now, for some absolute constant $C>0$, $e^{nP(|m|)-C}\leq\sum_{\epsilon\in\mathcal{A}^{n}}\left|E'_{\epsilon}(x)\right|^{m}\leq e^{nP(|m|)+C}$
with $P(|m|)$ the topological pressure of $E$ associated to the
potential $-m\log E'$ (see \cite{Falconer_tech} Theorem 5.1 p. 72).
Thus choosing $\mathsf{C}_{1}=e^{2C}$ concludes the proof of lemma
\ref{lem:Escape-function} with $A_{m,\mu}^{\mathbb{G}}$ as the escape
function
\end{proof}

\subsection{End of the proof}

Since $A_{m,\mu}^{\mathbb{G}}$ is elliptic and of order $m$ the
following spaces of distributions 
\[
\mathcal{H}_{\alpha,\mu}^{m}:=\mbox{Op}\left(A_{m,\mu}^{\mathbb{G}}\right)^{-1}\left(L^{2}\left(S^{1}\right)\otimes\mathcal{D}_{\alpha}\right)
\]
 are Hilbert spaces w.r. to the norm inherited from $L^{2}$ and are
isomorphic in terms of subsets of $\mathcal{D}'\left(S^{1}\right)\otimes\mathcal{D}_{\alpha}$
to $H^{m}\left(S^{1}\right)\otimes\mathcal{D}_{\alpha}$. Theorem
\ref{thm:weyl_law} essentially reduces to the following statement:
\begin{lem}
\label{lem:weyl_Q_m_mu}Choose any $\mathsf{C}>\mathsf{C}_{0}$ and
set $C_{U(1)}^{-1}=2\pi$; $C_{SU(2)}^{-1}=8\pi^{2}$. $\forall\epsilon>0$,
$\forall0\leq\mu<\frac{1}{2}$, $m<0$ sufficiently negative and $|\alpha|>0$
large enough, 
\[
\sharp\left\{ \mbox{spct}\left(\hat{F}_{\alpha}|_{\mathcal{H}_{\alpha,\mu}^{m}}\right)\bigcap\mathbb{C}\backslash D_{\epsilon}^{\mathbb{C}}\right\} \leq C_{\mathbb{G}}\mbox{dim}_{\mathbb{C}}\left(\mathcal{D}_{\alpha}\right)|\alpha|\mbox{Vol}\left\{ K_{\mathbb{G}}^{\mathsf{C}|\alpha|{}^{-\mu}}\right\} \left(1+o(1)\right).
\]
\end{lem}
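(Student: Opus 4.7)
My strategy follows the fractal Weyl law paradigm of Sj\"ostrand \cite{sjoestrand_90} and Zworski--Lin--Guillop\'e \cite{zworski_lin_guillope_02}, using the escape function just constructed to microlocalize the problem to a shrinking neighborhood of $K_{\mathbb{G}}$. First I would pass to the unitarily equivalent operator
\[
\hat{Q}_{m} \;:=\; \mbox{Op}\left(A_{m,\mu}^{\mathbb{G}}\right)\,\hat{F}_{\alpha}\,\mbox{Op}\left(A_{m,\mu}^{\mathbb{G}}\right)^{-1},
\]
which acts on $L^{2}(S^{1})\otimes\mathcal{D}_{\alpha}$ and shares the full spectrum of $\hat{F}_{\alpha}|_{\mathcal{H}^{m}_{\alpha,\mu}}$; it therefore suffices to count the eigenvalues of $\hat{Q}_{m}$ outside $D^{\mathbb{C}}_{\epsilon}$.

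Next, by the Egorov theorems (Lemmas \ref{lem:Egorov-a} and \ref{lem:Egorov-na}), the composition rules, and the escape inequality \eqref{eq:escape_estimate}, the principal symbol of $\hat{Q}_{m}^{*}\hat{Q}_{m}$ is bounded by $(\mathsf{C}_{1}\kappa^{m})^{2}$ outside $K_{\mathbb{G}}^{\mathsf{C}_{0}|\alpha|^{-\mu}}$ and by an $O(1)$ constant everywhere. Fix any $\mathsf{C}>\mathsf{C}_{0}$ and pick a cutoff $\chi\in S_{\mu}^{0}(M_{\mathbb{G}})$ which is identically $1$ on $K_{\mathbb{G}}^{\mathsf{C}_{0}|\alpha|^{-\mu}}$ and supported in $K_{\mathbb{G}}^{\mathsf{C}|\alpha|^{-\mu}}$. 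I would then split
\[
\hat{Q}_{m} \;=\; \hat{K} + \hat{R},\qquad \hat{K}:=\mbox{Op}(\chi)\,\hat{Q}_{m},\quad \hat{R}:=\mbox{Op}(1-\chi)\,\hat{Q}_{m},
\]
so that by Lemma \ref{lem:L^2continuity} applied to the symbol of $\hat{R}^{*}\hat{R}$ one obtains $\|\hat{R}\|\leq\mathsf{C}_{1}\kappa^{m}+o(1)$. Choosing $m$ negative enough to ensure $\mathsf{C}_{1}\kappa^{m}<\epsilon/2$ then forces $\|\hat{R}\|<\epsilon/2$ in the semiclassical limit.

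With this splitting in hand, the Weyl multiplicative inequality \cite{gohberg-00} relating eigenvalues to singular values, combined with the elementary bound $s_{\ell}(\hat{Q}_{m})\leq s_{\ell}(\hat{K})+\|\hat{R}\|$, yields
\[
\sharp\left\{\mbox{spct}\left(\hat{Q}_{m}\right)\cap\mathbb{C}\setminus D^{\mathbb{C}}_{\epsilon}\right\} \;\leq\; \sharp\left\{\ell\,:\,s_{\ell}\left(\hat{K}\right)>\epsilon/2\right\}.
\]
The singular values of $\hat{K}=\mbox{Op}(\chi)\hat{Q}_{m}$ are dominated by those of $\mbox{Op}(\chi)$ up to the uniform $L^{2}$-bound on $\hat{Q}_{m}$, and the eigenvalues of $\mbox{Op}(\chi)^{*}\mbox{Op}(\chi)$ are counted by the semiclassical Weyl law on $M_{\mathbb{G}}$: a factor $|\alpha|/(2\pi)$ per unit symplectic volume on $T^{*}S^{1}$, multiplied in the $\mbox{SU}(2)$ case by the factor $\dim_{\mathbb{C}}(\mathcal{D}_{j})/(4\pi)$ per unit area on $S^{2}$ coming from \eqref{eq:completude}. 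Together these produce exactly the constants $C_{U(1)}^{-1}=2\pi$ and $C_{SU(2)}^{-1}=8\pi^{2}$, while the support of $\chi$ contributes the volume factor $\mbox{Vol}\{K_{\mathbb{G}}^{\mathsf{C}|\alpha|^{-\mu}}\}$.

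The main obstacle lies in this last step: $\chi$ lives in the \emph{anisotropic} class $S_{\mu}^{0}$ with $\mu<\frac{1}{2}$ and its support shrinks at scale $|\alpha|^{-\mu}$, so the semiclassical Weyl asymptotics must be uniform in this shrinking regime. The standard remedy is to bound the singular-value count via $\mbox{Tr}\left(\mbox{Op}(\chi)^{*}\mbox{Op}(\chi)\right)$ and Markov's inequality, the trace in turn being approximated by $\int|\chi|^{2}\,d\mbox{Vol}$ times the semiclassical density factor above. Controlling the remainder is precisely where the condition $\mu<\frac{1}{2}$ is used, since only then are the subprincipal contributions in the Egorov expansion and the symbol-composition formulas genuinely of lower order and the Gårding-type estimates on the exotic class $S_{\mu}^{0}$ still give sharp principal terms.
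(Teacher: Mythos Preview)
Your plan is essentially the paper's: conjugate by $\mbox{Op}(A_{m,\mu}^{\mathbb{G}})$, use Egorov and the escape estimate to split into a piece living on $K_{\mathbb{G}}^{\mathsf{C}\hbar^{\mu}}$ plus a remainder of size $\mathsf{C}_{1}\kappa^{m}$, count the compact piece by phase--space volume, and pass from singular values to eigenvalues via Weyl's inequalities. The only structural difference is that you cut $\hat{Q}_{m}$ itself by $\mbox{Op}(\chi)$, whereas the paper splits the \emph{symbol} of $\hat{P}_{\mu}=\hat{Q}_{m}^{*}\hat{Q}_{m}$ and then quantizes; either route works.

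Two steps need tightening, though. First, the displayed inequality after ``yields'' does \emph{not} follow from the multiplicative Weyl inequality together with $s_{\ell}(\hat{Q}_{m})\leq s_{\ell}(\hat{K})+\|\hat{R}\|$: for a non--normal operator the eigenvalue count above $\epsilon$ can strictly exceed the singular--value count above $\epsilon/2$ (already a $2\times2$ Jordan block $\bigl(\begin{smallmatrix}1&a\\0&1\end{smallmatrix}\bigr)$ with $a$ large does this). What the Weyl inequality \emph{does} give is that if $s_{N(\alpha)}(\hat{Q}_{m})\to 0$ as $|\alpha|\to\infty$ then for every $C>1$ one has $|\lambda_{[CN(\alpha)]}|\to 0$; this is exactly Lemma~\ref{lem:``Singular-and-eigenvalues} and Corollary~\ref{cor:singular} in Appendix~B, and letting $C\downarrow 1$ along $|\alpha|\to\infty$ is how the factor $(1+o(1))$ arises. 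Second, bounding the singular--value count of $\hat{K}$ by Markov's inequality on $\mbox{Tr}\bigl(\mbox{Op}(\chi)^{*}\mbox{Op}(\chi)\bigr)$ produces the correct semiclassical density but leaves an extra prefactor of order $(\epsilon/2)^{-2}$, so you would not recover the stated constant $C_{\mathbb{G}}$. The paper gets the sharp constant through a genuine Weyl--type spectral count (Lemma~\ref{lem:weyl_law_compact}), sandwiching the indicator $1_{|t|>\epsilon}$ between polynomials vanishing at $0$ and using the exact trace formulas \eqref{eq:tracePDO}, \eqref{eq:trace_full_quant}; this is also where the restriction $\mu<\tfrac{1}{2}$ is actually consumed.
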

\begin{proof}
$\hat{F}_{\alpha}:\mathcal{H}_{\alpha,\mu}^{m}\rightarrow\mathcal{H}_{\alpha,\mu}^{m}$
is by construction unitary equivalent to 
\[
\hat{Q}_{m,\mu}:=\mbox{Op}\left(A_{m,\mu}^{\mathbb{G}}\right)\hat{F}_{\alpha}\mbox{Op}\left(A_{m,\mu}^{\mathbb{G}}\right)^{-1}:L^{2}\left(S^{1}\right)\otimes\mathcal{D}_{\alpha}\rightarrow L^{2}\left(S^{1}\right)\otimes\mathcal{D}_{\alpha}.
\]
Define 
\[
\hat{P}_{\mu}:=\hat{Q}_{m,\mu}^{*}\hat{Q}_{m,\mu}=\mbox{Op}\left(A_{m,\mu}^{\mathbb{G}}\right)^{-1}\hat{F}_{\alpha}^{*}\mbox{Op}\left(A_{m,\mu}^{\mathbb{G}}\right)^{2}\hat{F}_{\alpha}\mbox{Op}\left(A_{m,\mu}^{\mathbb{G}}\right)^{-1}.
\]
By the composition and Egorov theorems (\ref{eq:egorov_a}), (\ref{eq:egorov_na})
$\hat{P}_{\mu}\in OPS_{\mu}^{0}$ and its symbol reads
\[
P_{\mu}=\sum_{\epsilon\in\mathcal{A}}\left(\frac{A_{m,\mu}^{\mathbb{G}}\circ F_{\epsilon}}{A_{m,\mu}^{\mathbb{G}}}\right)^{2}\mbox{ mod }\hbar^{1-2\mu}S_{\mu}^{-1}.
\]
 From lemma \ref{lem:Escape-function}, $P_{\mu}$ naturally decomposes
into a compact part $K_{\mu}$ supported on $K_{\mathbb{G}}^{\mathsf{C}\hbar^{\mu}}$
, with some $\mathsf{C}>\mathsf{C}_{0}$, and a bounded part $R_{\mu}$
with $\sup\left|R_{\mu}\right|\leq\mbox{C}_{1}^{2}\kappa^{2m}+\mathcal{O}(\hbar^{1-2\mu})$.
With lemma \ref{lem:L^2continuity}, the decomposition transposes
to the operator level with $\hat{P}_{\mu}=\hat{K}_{\mu}+\hat{R}_{\mu}$,
$\hat{K}_{\mu}:=\mbox{Op}_{\hbar}\left(K_{\mu}\right)$ trace class
and self-adjoint and $\left\Vert \hat{R}_{\mu}\right\Vert \leq\mbox{C}_{1}^{2}\kappa^{2m}+\mathcal{O}(\hbar^{1-2\mu}).$
From lemma \ref{lem:weyl_law_compact} in the appendix , we have that
$\forall\epsilon>0$ and $\hbar\equiv|\alpha|^{-1}$ small enough
\[
\sharp\left\{ \mbox{spct}\left(\hat{K}_{\mu}\right)\bigcap\mathbb{R}\backslash(-\epsilon;\epsilon)\right\} \leq f_{\mathbb{G}}(\alpha)|\alpha|\mbox{Vol}\left\{ K_{\mathbb{G}}^{\mathsf{C}\hbar^{\mu}}\right\} \left(1+o(1)\right),
\]
with $f_{U(1)}(\nu)\equiv\frac{1}{2\pi}$ and $f_{SU(2)}(j)=\frac{1}{8\pi^{2}}\mbox{dim}_{\mathbb{C}}\left(\mathcal{D}_{j}\right)$.
By perturbation, eventually choosing a larger $\epsilon>0$, for $m$
sufficiently negative and $\hbar$ small enough, the same is true
for the eigenvalues of $\hat{P}_{\mu}$ thus for the singular values
of $\hat{Q}_{m,\mu}$. Corollary \ref{cor:singular} from the appendix
allows us to draw the same conclusion for the eigenvalues of $\hat{Q}_{m,\mu}$,
yielding the result. \end{proof}
\begin{defn}
\label{The-upper-Minkowski}The {\small upper Minkowski}\textbf{}%
\footnote{For nice sets the Minkowski dimension coincides with the Hausdorff
dimension $\mbox{dim}_{H}$, but in general $\mbox{dim}_{H}A\leq\dim A$.%
} dimension (or box dimension) of a non empty bounded subset $A$ of
$\mathbb{R}^{d}$ is 
\begin{equation}
d-\dim A:=\mbox{co}\dim A:=\sup_{s\in\mathbb{R}}\left\{ \limsup_{\delta\downarrow0}\delta^{-s}\cdot\mbox{Vol}^{d}\left(A^{\delta}\right)<+\infty\right\} .\label{eq:defdim}
\end{equation}
In general $\limsup_{\delta\downarrow0}\delta^{-co\dim A}\cdot\mbox{Vol}^{d}\left(A^{\delta}\right)<+\infty$
does not hold%
\footnote{when it does the set $A$ is said to be of pure dimension see \cite{sjoestrand_90}
for some comments and further references.%
}, so $\mbox{Vol}^{d}\left(A^{\delta}\right)=\mathcal{O}\left(\delta^{co\dim A-\eta}\right)$
for any $\eta>0$. We write the latter $\mbox{Vol}^{d}\left(A^{\delta}\right)=\mathcal{O}\left(\delta^{co\dim A-0}\right).$ 
\end{defn}
From this definition and lemma \ref{lem:weyl_Q_m_mu}, Theorem \ref{thm:weyl_law}
follows rather directly. For any $0\leq\mu<\frac{1}{2}$, the spectrum
of $\hat{F}_{\alpha}|_{\mathcal{H}_{\alpha,\mu}^{m}}$ is no other
than the Ruelle spectrum of resonances, the latter independent of
$\mu$. Thus, for any $\epsilon>0$, $m<0$ sufficiently negative,
for some $\tilde{C}_{\mathbb{G}}>0$ independent of $\alpha,\, m$,
and for $|\alpha|$ large enough: 
\begin{equation}
\sharp\left\{ \mbox{spct}\left(\hat{F}_{\alpha}|_{H^{m}\otimes\mathcal{D}_{\alpha}}\right)\bigcap\mathbb{C}\backslash D_{\epsilon}^{\mathbb{C}}\right\} \leq\tilde{C}_{\mathbb{G}}\mbox{dim}_{\mathbb{C}}\left(\mathcal{D}_{\alpha}\right)|\alpha|^{1-\frac{1}{2}co\dim K_{\mathbb{G}}+0}\label{eq:derniere_etape}
\end{equation}
In the Abelian case 
\[
\mbox{dim}_{\mathbb{C}}\left(\mathcal{D}_{\alpha}\right)|\alpha|^{1-\frac{1}{2}co\dim K_{\mathbb{G}}+0}=\left|\nu\right|^{\frac{1}{2}\left(2-(2-\dim K_{\mathbb{G}})\right)+0}=\left|\nu\right|^{\frac{1}{2}\dim K_{\mathbb{G}}+0}.
\]
In the non-Abelian case, 
\[
\mbox{dim}_{\mathbb{C}}\left(\mathcal{D}_{\alpha}\right)|\alpha|^{1-\frac{1}{2}co\dim K_{\mathbb{G}}+0}=\left(\frac{2j+1}{j}\right)j^{\frac{1}{2}\dim K_{\mathbb{G}}+0}.
\]
Thus (\ref{eq:derniere_etape}) yields Theorem \ref{thm:weyl_law}.

\section{Appendix A. Adapted Weyl type estimates}

If $a\in S_{\mu}^{0}\cap L^{2}\left(\mathbb{R}^{2d}\right)$ one has
the following important \emph{exact} formula \cite{zworski-03}: 
\begin{equation}
\mbox{tr}\left(\mbox{Op}_{\hbar}^{w}(a)\right)=\frac{1}{\left(2\pi\hbar\right)^{d}}\int a(x,\xi)dxd\xi.\label{eq:tracePDO}
\end{equation}
 For the quantization $\mbox{Op}_{j}:=\mbox{Op}_{j}^{AW}\circ\mbox{Op}_{j^{-1}}^{w}$
defined in (\ref{eq:full_quantization}), using (\ref{eq:anti-wick})
and (\ref{eq:tracePDO}) we have for any $a\in S_{\mu}^{0}(T^{*}S^{1}\times S^{2})\cap L^{2}\left(T^{*}S^{1}\times S^{2}\right)$:
\begin{equation}
\mbox{tr}\left(\mbox{Op}_{j}(a)\right)=\frac{\dim_{\mathbb{C}}\left(\mathcal{D}_{j}\right)j}{8\pi^{2}}\int_{T^{*}S^{1}\times S^{2}}a(x,\xi,\mathbf{n})dxd\xi d\mathbf{n}\label{eq:trace_full_quant}
\end{equation}

\begin{lem}
\label{lem:weyl_law_compact}Let $a\in S_{\mu}^{-\infty}$ be a real
compacly supported symbol. $\forall\hbar>0,$ $\mbox{\emph{Op}}_{\hbar}^{w}(a)$
is self-adjoint and trace class on $L^{2}$. Furthermore, for any
$\epsilon>0$, $\hbar$ small enough: 
\begin{equation}
\sharp\left\{ \mbox{\emph{spct}}\left(\mbox{\emph{Op}}_{\hbar}^{w}(a)\right)\bigcap\mathbb{R}\backslash(-\epsilon;\epsilon)\right\} =\frac{1}{\left(2\pi\hbar\right)^{d}}(\,\mbox{\emph{Vol}}\left\{ |a|>\epsilon\right\} +\mbox{\emph{Vol}}\left\{ |a|>0\right\} o(1)\,)\label{eq:loi_weyl}
\end{equation}
 The same holds true for $a\in S_{\mu}^{-\infty}\cap C_{0}^{\infty}\left(T^{*}S^{1}\times S^{2}\right)$,
for $j>0$ large enough, with 
\begin{equation}
\sharp\left\{ \mbox{\emph{spct}}\left(\mbox{\emph{Op}}_{j}(a)\right)\bigcap\mathbb{R}\backslash(-\epsilon;\epsilon)\right\} =\frac{\dim_{\mathbb{C}}\left(\mathcal{D}_{j}\right)j}{8\pi^{2}}(\,\mbox{\emph{Vol}}\left\{ |a|>\epsilon\right\} +\mbox{\emph{Vol}}\left\{ |a|>0\right\} o(1)\,)\label{eq:loi_weyl_Op_j}
\end{equation}
\end{lem}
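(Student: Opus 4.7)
The plan is to reduce the eigenvalue count to the exact trace identities \eqref{eq:tracePDO} and \eqref{eq:trace_full_quant} applied to smooth spectral cut-offs of $\mbox{Op}_\hbar^w(a)$, sandwiched between two sides of the indicator of $\{|t|\geq\epsilon\}$. First, since $a$ is real, $\mbox{Op}_\hbar^w(a)$ is formally self-adjoint, and since $a\in S_\mu^{-\infty}$ has compact support, applying \eqref{eq:tracePDO} to $|a|$ (or to the positive and negative parts of $a$) shows it is trace class on $L^2(\mathbb{R}^d)$. Its spectrum is therefore a real sequence $\{\lambda_k(\hbar)\}$ accumulating only at $0$. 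For a small parameter $\delta>0$, I pick $f_\pm^\delta\in C_c^\infty(\mathbb{R};[0,1])$ with
\[
\mathbf{1}_{\{|t|\geq\epsilon+\delta\}}\;\leq\;f_-^\delta\;\leq\;\mathbf{1}_{\{|t|\geq\epsilon\}}\;\leq\;f_+^\delta\;\leq\;\mathbf{1}_{\{|t|\geq\epsilon-\delta\}},
\]
so the spectral theorem gives the sandwich
\[
\mbox{tr}\bigl(f_-^\delta(\mbox{Op}_\hbar^w(a))\bigr)\;\leq\;\sharp\{k:\,|\lambda_k(\hbar)|>\epsilon\}\;\leq\;\mbox{tr}\bigl(f_+^\delta(\mbox{Op}_\hbar^w(a))\bigr).
\]

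Next I invoke a Helffer--Sj\"ostrand functional calculus for PDOs of class $S_\mu^m$: for any $f\in C_c^\infty(\mathbb{R})$ one has $f(\mbox{Op}_\hbar^w(a))=\mbox{Op}_\hbar^w(f\circ a)+\hbar^{1-2\mu}\hat{R}_\hbar$, where $\hat{R}_\hbar$ is a PDO with compactly supported symbol in $S_\mu^{-\infty}$ and hence trace class with $|\mbox{tr}\,\hat{R}_\hbar|=\mathcal{O}(\hbar^{-d})$. Combined with \eqref{eq:tracePDO} this gives
\[
\mbox{tr}\bigl(f_\pm^\delta(\mbox{Op}_\hbar^w(a))\bigr)=\frac{1}{(2\pi\hbar)^d}\int f_\pm^\delta\bigl(a(x,\xi)\bigr)\,dx\,d\xi+\mathcal{O}\bigl(\hbar^{1-2\mu-d}\bigr).
\]
The pointwise bounds imply $\mbox{Vol}\{|a|\geq\epsilon+\delta\}\leq\int f_\pm^\delta(a)\leq\mbox{Vol}\{|a|\geq\epsilon-\delta\}$, so the only loss comes from the transition band $\{\epsilon-\delta<|a|<\epsilon+\delta\}$, whose volume is bounded above by $\mbox{Vol}\{|a|>0\}$ and tends to the measure of $\{|a|=\epsilon\}$ as $\delta\downarrow0$. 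Choosing $\delta=\delta(\hbar)\to 0$ slowly enough, both the band contribution and the relative functional-calculus remainder $\hbar^{1-2\mu}$ are $o(1)$ times $\mbox{Vol}\{|a|>0\}$, yielding \eqref{eq:loi_weyl}.

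The non-Abelian statement \eqref{eq:loi_weyl_Op_j} follows by the same scheme with \eqref{eq:trace_full_quant} replacing \eqref{eq:tracePDO}. Stability of the class $\{\mbox{Op}_j(b):b\in S_\mu^{-\infty}\cap C_0^\infty(T^*S^1\times S^2)\}$ under Helffer--Sj\"ostrand functional calculus follows from Theorem \ref{thm:(cahen-laurent-charles)} applied to the Berezin--anti-Wick factor $\mbox{Op}_j^{AW}$, combined with the composition rule and $L^2$--continuity for the standard $\hbar$-Weyl part $\mbox{Op}_{j^{-1}}^w$ recorded in Section \ref{sec:PDOtheory}. Substituting \eqref{eq:trace_full_quant} then yields the prefactor $\dim_{\mathbb{C}}(\mathcal{D}_j)\,j/(8\pi^2)$ and the sandwich argument closes as before.

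The main obstacle is precisely this functional-calculus step in the non-Abelian setting: one needs $f(\mbox{Op}_j(a))=\mbox{Op}_j(f\circ a)+j^{2\mu-1}\hat{R}_j$ with $\hat{R}_j$ trace class of trace norm $\mathcal{O}(\dim_{\mathbb{C}}(\mathcal{D}_j)\,j)$, which requires a careful handling of composition expansions for $\mbox{Op}_j^{AW}\circ\mbox{Op}_j^{AW}$ to sub-principal order. Once this is in place, the sandwich inequalities, the optimal choice of $\delta=\delta(\hbar)$, and the invocation of the exact trace formulas are all routine.
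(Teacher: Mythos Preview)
Your argument is correct and follows the same sandwich-and-trace scheme as the paper, but the functional-calculus step is handled differently. The paper avoids Helffer--Sj\"ostrand entirely by approximating $\mathbf{1}_{\{|t|\geq\epsilon\}}$ on $[-C,C]$ (with $C>\sup|a|$) by \emph{polynomials} $P_\pm$ of degree $N$ vanishing at $0$, with $P_-\leq\mathbf{1}_\epsilon\leq P_+$. For polynomials the functional calculus is nothing more than iterated composition of PDOs, so the symbol of $P_\pm(\hat A)$ is $P_\pm(a)+\hbar^{2(1-2\mu)}b_\pm$ directly from the composition theorem already stated in Section~\ref{sec:PDOtheory} and Theorem~\ref{thm:(cahen-laurent-charles)}. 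One then lets $N\to\infty$ first (to make the polynomial approximation error $\delta_N$ small) and $\hbar\to0$ second (to kill $\hbar^{2(1-2\mu)}C_N$).

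What each route buys: your smooth cut-offs $f_\pm^\delta$ are conceptually cleaner and give the natural transition-band error, but they force you to invoke (or redo) a Helffer--Sj\"ostrand calculus for the hybrid quantization $\mbox{Op}_j=\mbox{Op}_j^{AW}\circ\mbox{Op}_{j^{-1}}^w$, which you yourself flag as the main obstacle and which is not developed in the paper. The paper's polynomial trick sidesteps this completely: no resolvent estimates, no almost-analytic extensions, and the non-Abelian case~\eqref{eq:loi_weyl_Op_j} goes through with literally the same argument and~\eqref{eq:trace_full_quant}. The price is a two-parameter limit ($N$, then $\hbar$) instead of your diagonal choice $\delta=\delta(\hbar)$.
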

\begin{proof}
Consider $1_{\epsilon}$ $-$the characteristic function of $\mathbb{R}\backslash(-\epsilon;\epsilon)-$
so that 
\[
\mbox{tr}1_{\epsilon}\left(\hat{A}\right)=\sharp\left\{ \mbox{spct}\left(\hat{A}\right)\bigcap\mathbb{R}\backslash(-\epsilon;\epsilon)\right\} .
\]
Let $P_{\pm}$ be polynomials of degree $N$ vanishing at zero and
approximating $1_{\epsilon}$ on $\left[-C;C\right]$, with $C>\sup|a|$,
s.t. $P_{-}(t)\leq1_{\epsilon}(t)\leq P_{+}(t)$ for any $t\in\left[-C;C\right]$.
Let us write $\hat{A}$ for $\mbox{Op}_{\hbar}^{w}(a)$. $P_{\pm}\left(\hat{A}\right)$
are well defined PDOs in $OPS_{\mu}^{-\infty}$ and, by the composition
theorem, their respective symbols read $P_{\pm}(a)+\hbar^{2(1-2\mu)}b_{\pm}$
with $b_{\pm}$ negligible out of the support of $a$. By eq.(\ref{eq:tracePDO})
:
\[
\mbox{tr}P_{\pm}\left(\hat{A}\right)=\frac{1}{\left(2\pi\hbar\right)^{d}}\int\left(P_{\pm}(a)+\hbar^{2(1-2\mu)}b_{\pm}\right)dxd\xi.
\]
 Now $\left|\int b_{\pm}dxd\xi\right|\leq\mbox{Vol}\left\{ |a|>0\right\} C_{N}$
with $C_{N}$ independent of $\hbar$. On the other hand $P_{\pm}=1_{\epsilon}+r_{\pm}$
on $[-C;C]$ with $r_{\pm}(0)=0$. We thus get $\int P_{+}(a)dxd\xi\leq\mbox{Vol}\left\{ |a|>\epsilon\right\} +\mbox{Vol}\left\{ |a|>0\right\} \delta_{N}$
and the opposite inequality for $\int P_{-}(a)dxd\xi$, with $\delta_{N}\rightarrow0$
as $N$ grows. By the spectral and $L^{2}-$continuity (lemma \ref{lem:L^2continuity})
theorems, for $\hbar>0$ small enough, 
\[
\mbox{tr}P_{-}\left(\hat{A}\right)\leq\mbox{tr}1_{\epsilon}\left(\hat{A}\right)\leq\mbox{tr}P_{+}\left(\hat{A}\right)
\]
 so
\[
\frac{1}{\left(2\pi\hbar\right)^{d}}(\,\mbox{Vol}\left\{ |a|>\epsilon\right\} -\mbox{Vol}\left\{ |a|>0\right\} (\delta_{N}+\hbar^{2(1-2\mu)}C_{N})\,)\leq\mbox{tr}1_{\epsilon}\left(\hat{A}\right)
\]
 and
\[
\mbox{tr}1_{\epsilon}\left(\hat{A}\right)\leq\frac{1}{\left(2\pi\hbar\right)^{d}}(\,\mbox{Vol}\left\{ |a|>\epsilon\right\} +\mbox{Vol}\left\{ |a|>0\right\} (\delta_{N}+\hbar^{2(1-2\mu)}C_{N})\,).
\]
As long as $\mu<\frac{1}{2}$, for any $\delta>0$ arbitrarily small,
one can take $N$ large enough s.t. $\delta_{N}\leq\delta/2$ and
then $\hbar$ small enough s.t. $\hbar^{2(1-2\mu)}C_{N}\leq\delta/2$,
so that the term $\delta_{N}+\hbar^{2(1-2\mu)}C_{N}$ is smaller than
$\delta$. This gives (\ref{eq:loi_weyl}). The exact same argument
can be carried out for $\hat{A}:=\mbox{Op}_{j}(a)$, using (\ref{eq:trace_full_quant})
to compute the trace of $P_{\pm}\left(\hat{A}\right)$ and with $\hbar\equiv j^{-1}$
to get (\ref{eq:loi_weyl_Op_j}).
\end{proof}

\section{Appendix B. General lemmas on singular values}

Let $(P_{\nu})_{\nu\in\mathbb{N}}$ be a family of compact operators
on some Hilbert space. Consider any $P_{\nu}$ and let $(\lambda_{j,\nu})_{j\in\mathbb{N}^{*}}\in\mathbb{C}$
be the sequence of its eigenvalues ordered decreasingly according
to multiplicity:
\[
|\lambda_{1,\nu}|\geq|\lambda_{2,\nu}|\geq...
\]
In the same manner, define $(\mu_{j,\nu})_{j\in\mathbb{N}^{*}}\in\mathbb{R}^{+}$,
the decreasing sequence of singular values of $P_{\nu}$ ( the eigenvalues
of $\sqrt{P_{\nu}^{*}P_{\nu}}$ ). Finally let $[x]\in\mathbb{N}$
stand for the integral part of $x\in\mathbb{R}$. 
\begin{lem}
\label{lem:``Singular-and-eigenvalues} Suppose there exits a map
$N:\mathbb{N}\rightarrow\mathbb{N}$ s.t. $N(\nu)\rightarrow\infty$
and \textup{$\mu_{N(\nu),\nu}\rightarrow0$ as $\nu$ grows.} Then
$\forall C>1,$ $|\lambda_{[C.N(\nu)],\nu}|\rightarrow_{\nu\rightarrow\infty}0$. \end{lem}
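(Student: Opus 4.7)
The plan is to invoke Weyl's multiplicative inequality between eigenvalues and singular values of a compact operator (see e.g.\ Gohberg--Krein): for every $n\in\mathbb{N}^{*}$,
\[
\prod_{j=1}^{n}|\lambda_{j,\nu}|\leq\prod_{j=1}^{n}\mu_{j,\nu}.
\]
Combined with the fact that $(|\lambda_{j,\nu}|)_{j}$ is non-increasing, this gives the pointwise bound $|\lambda_{n,\nu}|^{n}\leq\prod_{j=1}^{n}\mu_{j,\nu}$, which is the entry point of the argument.

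Next I would split the product at the index $N(\nu)$ and exploit the monotonicity of $(\mu_{j,\nu})_{j}$. The $n-N(\nu)$ factors past $N(\nu)$ are each at most $\mu_{N(\nu),\nu}$, while the first $N(\nu)$ factors are crudely bounded by $\mu_{1,\nu}^{N(\nu)}=\|P_{\nu}\|^{N(\nu)}$, so
\[
|\lambda_{n,\nu}|^{n}\leq\|P_{\nu}\|^{N(\nu)}\,\mu_{N(\nu),\nu}^{n-N(\nu)}.
\]
Specializing to $n=[C\cdot N(\nu)]$, taking $n$-th roots and letting $\nu\to\infty$, the exponents $N(\nu)/[C\cdot N(\nu)]$ and $([C\cdot N(\nu)]-N(\nu))/[C\cdot N(\nu)]$ converge respectively to $1/C$ and $(C-1)/C$. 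Since $C>1$ and $\mu_{N(\nu),\nu}\to0$ by hypothesis, the right-hand side tends to $0$ as soon as $\|P_{\nu}\|$ stays bounded; equivalently, by taking logarithms, $\log|\lambda_{[C\cdot N(\nu)],\nu}|$ is bounded above by a convex combination of $\log\|P_{\nu}\|$ and $\log\mu_{N(\nu),\nu}$ that is driven to $-\infty$ by the second term.

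The only subtlety is that this uniform bound on $\|P_{\nu}\|=\mu_{1,\nu}$ is not made explicit in the statement, but it holds automatically in the application of the lemma: the operators $\hat{Q}_{m,\mu}$ to which it is applied are related by conjugation to $\hat{F}_{\alpha}$, whose norms are uniformly controlled by the proof of Theorem \ref{thm:gap}. Thus the main obstacle is really one of bookkeeping rather than of substance; the core of the proof is the interplay between Weyl's multiplicative inequality and the elementary geometric-mean argument made possible by the monotonicity of the sequence of singular values.
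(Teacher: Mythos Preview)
Your argument is correct and essentially identical to the paper's: both invoke Weyl's multiplicative inequality, split the product of singular values at the index $N(\nu)$, use monotonicity to bound the tail by a power of $\mu_{N(\nu),\nu}$, and take $n$-th roots (the paper does the equivalent manipulation in logarithms). Your explicit remark that a uniform bound on $\|P_\nu\|=\mu_{1,\nu}$ is needed is in fact more careful than the paper's own write-up, which tacitly drops the first $N(\nu)$ terms of the log-sum as if they were nonnegative; as you note, this boundedness is satisfied in the intended application.
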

\begin{cor}
\label{cor:singular} Let $N\,:\,\mathbb{N}\rightarrow\mathbb{N}$
be as in lemma \ref{lem:``Singular-and-eigenvalues}. Suppose that
$\forall\epsilon>0,$ $\exists A_{\epsilon}\geq0$ s.t. $\forall\nu\geq A_{\epsilon};\;\#\left\{ \, j\in\mathbb{N}^{*}\;|\;\mu_{j,\nu}>\epsilon\right\} <N(\nu).$
Then for any $C>1,$ $\epsilon>0$ there exists $B_{C,\epsilon}\geq0$
such that: 
\begin{equation}
\forall\nu\geq B_{C,\epsilon};\;\#\left\{ \, j\in\mathbb{N}^{*}\;|\;|\lambda_{j,\nu}|>\epsilon\right\} \leq[C\times N(\nu)].\label{eq:cor2}
\end{equation}
\end{cor}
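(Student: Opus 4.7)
The plan is to reduce Corollary \ref{cor:singular} directly to Lemma \ref{lem:``Singular-and-eigenvalues} by translating each counting statement into a pointwise statement at a prescribed coordinate of the sorted sequence. This is possible because both $(\mu_{j,\nu})_j$ and $(|\lambda_{j,\nu}|)_j$ are decreasing, so ``number of terms above a threshold'' and ``value at a fixed index'' are two dual formulations of the same information.

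First I would verify the pointwise input demanded by Lemma \ref{lem:``Singular-and-eigenvalues}. Since the singular values are ordered decreasingly, the hypothesis $\#\{j\,|\,\mu_{j,\nu}>\epsilon\}<N(\nu)$ for all $\nu\geq A_{\epsilon}$ is equivalent to $\mu_{N(\nu),\nu}\leq\epsilon$ for all $\nu\geq A_{\epsilon}$. Applying this for a sequence $\epsilon_k\downarrow 0$ with the associated thresholds $A_{\epsilon_k}$ and performing a standard diagonal extraction yields
\[
\lim_{\nu\to\infty}\mu_{N(\nu),\nu}=0.
\]
Together with $N(\nu)\to\infty$ (inherited from the setting of the lemma), this supplies the full hypothesis required there.

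Next I would invoke Lemma \ref{lem:``Singular-and-eigenvalues}: for any fixed $C>1$,
\[
\lim_{\nu\to\infty}\bigl|\lambda_{[C\cdot N(\nu)],\nu}\bigr|=0.
\]
It remains to translate this back into counting form. Fix $\epsilon>0$ and $C>1$. The limit above furnishes $B_{C,\epsilon}\geq 0$ such that $|\lambda_{[C N(\nu)],\nu}|<\epsilon$ whenever $\nu\geq B_{C,\epsilon}$. Since $(|\lambda_{j,\nu}|)_j$ is decreasing, this means
\[
\#\bigl\{j\in\mathbb{N}^{*}\,|\,|\lambda_{j,\nu}|>\epsilon\bigr\}\leq[C\cdot N(\nu)]-1\leq[C\cdot N(\nu)],
\]
which is exactly (\ref{eq:cor2}).

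There is no genuine obstacle here: the corollary is a repackaging of Lemma \ref{lem:``Singular-and-eigenvalues} tailored to the language used in the proof of Lemma \ref{lem:weyl_Q_m_mu}, where the counting bound on the singular values of $\hat{Q}_{m,\mu}$ (equivalently the eigenvalues of $\hat{P}_{\mu}=\hat{Q}_{m,\mu}^{*}\hat{Q}_{m,\mu}$) is produced through the spectral count for its trace class self-adjoint part $\hat{K}_{\mu}$. All the genuine content sits one level below, inside Lemma \ref{lem:``Singular-and-eigenvalues}, which is a direct consequence of the Weyl multiplicative inequality $\prod_{j=1}^{k}|\lambda_{j,\nu}|\leq\prod_{j=1}^{k}\mu_{j,\nu}$; once that lemma is in hand, the argument above is purely a matter of bookkeeping.
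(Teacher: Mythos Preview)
Your proof is correct and follows essentially the same route as the paper's own argument: translate the counting hypothesis into $\mu_{N(\nu),\nu}\to 0$, apply Lemma \ref{lem:``Singular-and-eigenvalues}, and translate the conclusion back into a counting bound. The only difference is that you spell out each step (the paper compresses everything into two sentences); note that what you call a ``diagonal extraction'' is unnecessary, since ``$\forall\epsilon>0,\ \exists A_\epsilon,\ \forall\nu\geq A_\epsilon,\ \mu_{N(\nu),\nu}\leq\epsilon$'' is already the literal definition of $\mu_{N(\nu),\nu}\to 0$.
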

\begin{proof}
(Of corollary \ref{cor:singular}). Suppose that for any $\epsilon>0$,
there exists a rank $A_{\epsilon}$ s.t. for all $\nu\geq A_{\epsilon}$
$\#\left\{ j\in\mathbb{N}^{*}\;;\;\mu_{j,\nu}\right\} <N(\nu),$ which
means that $\mu_{N(\nu),\nu}\rightarrow_{\nu\rightarrow\infty}0$
and from Lemma \ref{lem:``Singular-and-eigenvalues}, $\forall C>1$,
$|\lambda_{[CN(\nu)],\nu}|\rightarrow_{\nu\rightarrow\infty}0,$ which
can be directly restated as (\ref{eq:cor2}).
\end{proof}
Let us now prove the lemma.
\begin{proof}
(Of lemma \ref{lem:``Singular-and-eigenvalues}) The main relation
between singular and eigenvalues is given by the Weyl inequalities
( see \cite{gohberg-00} p. 50 for a proof):
\begin{equation}
\prod_{j=1}^{k}\mu_{j,\nu}\leq\prod_{j=1}^{k}|\lambda_{j,\nu}|;\;\;\;\;\forall k\in\mathbb{N}^{*}.\label{eq:weyl_inequalities}
\end{equation}
Let $m_{j,\nu}:=-log\left(\mu_{j,\nu}\right)$, $l_{j,\nu}:=-log\left(|\lambda_{j,\nu}|\right)$
to define $S_{k,\nu}:=\sum_{j=1}^{k}m_{j,\nu},$ and $L_{k,\nu}:=\sum_{j=1}^{k}l_{j,\nu}$.
The Weyl inequalities (\ref{eq:weyl_inequalities}) thus reads: $S_{k,\nu}\leq L_{k,\nu}$,
$\forall k\in\mathbb{N}^{*}$. Notice that both sequences $\left(l_{j,\nu}\right)_{j\geq1}$
and $\left(m_{j,\nu}\right)_{j\geq1}$ are increasing so, $\forall k\in\mathbb{N}^{*}$,
$k\cdot l_{k,\nu}\geq L_{k,\nu}$, and for any $k,K\in\mathbb{N}^{*},$
\begin{equation}
S_{k+K,\nu}\geq K\cdot m_{k,\nu}.\label{eq:somme_sing}
\end{equation}
Suppose that $\mu_{N(\nu),\nu}\rightarrow0$ (hence $m_{N(\nu),\nu}\rightarrow\infty$)
as $\nu\rightarrow\infty$ and choose some constant $C>1$, By (\ref{eq:somme_sing})
we have that 
\begin{equation}
S_{[CN(\nu)],\nu}\geq\left([CN(\nu)]-N(\nu)\right)\cdot m_{N(\nu),\nu},\label{eq:S_vs_m}
\end{equation}
and therefore, since $l_{[CN(\nu)],\nu}\geq\frac{1}{[CN(\nu)]}\times L_{[CN(\nu)],\nu}\geq\frac{1}{[CN(\nu)]}\times S_{[CN(\nu)],\nu}$,
from (\ref{eq:S_vs_m}) we get
\begin{equation}
l_{[CN(\nu)],\nu}\geq\frac{[CN(\nu)]-N(\nu)}{[CN(\nu)]}\times m_{[CN(\nu)],\nu}.\label{eq:l_CN}
\end{equation}
Notice that $[CN(\nu)]-N(\nu)>0$ for $\nu$ large enough. Therefore
(\ref{eq:l_CN}) gives the result.
\end{proof}
\bibliographystyle{plain}
\bibliography{articles}

\end{document}